\documentclass[11pt,reqno]{amsart}
\usepackage[T1]{fontenc}
\usepackage{amsmath,amsthm,amssymb,mathrsfs}
\usepackage[abbrev,non-sorted-cites]{amsrefs}
\usepackage{hyperref}
\usepackage{color,graphicx}
\usepackage{lmodern,microtype,mathtools,bm,enumitem,needspace}
\usepackage{bookmark}
\usepackage{array,booktabs,tabularx}
\usepackage{placeins}
\usepackage[nameinlink,capitalize,noabbrev]{cleveref}
\microtypesetup{expansion=false}
\hypersetup{hypertexnames=false,hidelinks,
  pdfauthor={Wei-Chun Chen, Wei-Ting Kao, and Chun-Kai Lien},
  pdftitle={Stable Rigidity in the Heisenberg Groups},
  pdfsubject={Centre reconstruction and volume-preserving CMC stability},
  pdfkeywords={Heisenberg group, Pansu sphere, horizontal perimeter, Jacobi operator,
  stable CMC, centre reconstruction, Morse index, characteristic points, variation}}
\setlist[itemize]{leftmargin=2.2em,itemsep=0.15em,topsep=0.25em}
\numberwithin{equation}{section}

\newcommand{\PansuResultBookmark}[3]{%
  \ifcsname r@#2\endcsname%
    \bookmark[level=#1,dest={\getrefbykeydefault{#2}{anchor}{}}]{%
      #3 \ref*{#2}: \nameref*{#2}}%
  \fi%
}

\theoremstyle{plain}
\newtheorem{thm}{Theorem}[section]
\newtheorem{theorem}[thm]{Theorem}
\newtheorem{lemma}[thm]{Lemma}
\newtheorem{proposition}[thm]{Proposition}
\newtheorem{corollary}[thm]{Corollary}
\theoremstyle{definition}

\newtheorem*{AIuse}{AI Usage Statement}

\newtheorem*{applicationremark}{Remark}

\newcommand{\RR}{\mathbb R}
\newcommand{\CC}{\mathbb C}
\newcommand{\Hn}{\mathbb H^n}
\newcommand{\LTheta}{{L_\Theta}}
\newcommand{\gTheta}{{g_\Theta}}
\newcommand{\PSphere}{\mathcal S}
\newcommand{\Sphere}{S^{2n-1}}
\newcommand{\gSphere}{{g_{S^{2n-1}}}}
\newcommand{\TSphere}{T_S}
\newcommand{\CRD}{\xi_S}
\newcommand{\Shape}{\mathfrak S}
\newcommand{\AngleError}{\mathsf E_u}
\newcommand{\ShapeError}{\mathsf E_P}
\newcommand{\PH}{P_H}
\newcommand{\dd}{\,\mathrm d}
\newcommand{\abs}[1]{\lvert#1\rvert}
\newcommand{\ip}[2]{\langle #1,#2\rangle}
\DeclareMathOperator{\Id}{Id}
\let\Re\relax
\let\Im\relax
\DeclareMathOperator{\Re}{Re}
\DeclareMathOperator{\Im}{Im}
\allowdisplaybreaks[1]

\title[Stable Rigidity]{Stable Rigidity in the Heisenberg Groups}
\subjclass[2020]{Primary 53C17, 53C24; Secondary 49Q10, 53C42}
\author{Wei-Chun Chen}
\address{Department of Mathematics, National Taiwan University, Taipei 10617, Taiwan}
\email{b14201028@ntu.edu.tw}

\author{Wei-Ting Kao}
\address{Institute of Mathematics, Academia Sinica, Taipei 106319, Taiwan}
\email{wtkao@as.edu.tw}

\author{Chun-Kai Lien}
\address{Department of Mathematics, National Taiwan University, Taipei 10617, Taiwan}
\email{d13221002@ntu.edu.tw}

\date{}
\keywords{Heisenberg group, Pansu sphere, horizontal perimeter, volume-preserving stability, constant mean curvature, variation, characteristic sets}

\newcommand{\PansuCMapPath}{}
\IfFileExists{\detokenize{$TEXMFOUTPUT}/\jobname-pansu-oms.cmap}
  {\renewcommand{\PansuCMapPath}{\detokenize{$TEXMFOUTPUT}/}}
  {\IfFileExists{\detokenize{$TEXMF_OUTPUT_DIRECTORY}/\jobname-pansu-oms.cmap}
    {\renewcommand{\PansuCMapPath}{\detokenize{$TEXMF_OUTPUT_DIRECTORY}/}}{}}
\AtBeginDocument{%
}

\begin{document}
\begin{abstract}
For $n\ge2$, we prove that the $C^2$ boundary of a bounded nonempty
open set in $\Hn$ is volume-preserving stable if and only if it is
a Pansu sphere up to left translation and Heisenberg dilation.
The proof combines centre reconstruction with a volume-constrained
test field derived from Pansu dilation.
Consequently, every isoperimetric region admitting an
open representative with $C^2$ boundary is a Pansu ball up to these
symmetries.
\end{abstract}
\maketitle

\section{Introduction}\label{sec:introduction}
We classify volume-preserving stable $C^2$ boundaries of bounded
open sets in the Heisenberg groups $\Hn$, $n\ge2$.
Stationarity supplies constant horizontal mean curvature (CMC).

The Euclidean prototype is the characterization of compact stable CMC
hypersurfaces by Barbosa and do Carmo~\cite{BarbosaDoCarmo}.
Wente's proof~\cite{Wente} uses normal displacement and dilation.
Here an exact Hessian identity controls the variation of a
reconstructed centre and the departure of the normal angle from
its Pansu relation. Stability enforces both conditions, yielding
the classification without assuming rotational symmetry or umbilicity.

This article is submitted concurrently with our companion paper,
\emph{The Jacobi Spectrum and Global Second Variation of Pansu Spheres}~\cite{JacobiSpectrum}.
That paper provides the second-variation framework in its
Section~2 and proves the stability of Pansu spheres under ambient
variations moving the characteristic poles in its Corollary~1.4.
The present paper proves the rigidity direction.

Pansu spheres are candidates
for the sub-Riemannian isoperimetric problem~\cites{Pansu,CDPT}.
Rotational CMC hypersurfaces were studied by Ritor\'e and
Rosales~\cite{RitoreRosales} and Chiu, Lai and Liu~\cite{CLL};
positive-CMC umbilic spheres were characterized by Cheng, Chiu,
Hwang and Yang~\cite{CCHY}*{Theorem~A and Corollary~A$'$}.
\pagebreak[3]
Their subsequent work~\cite{CCHYSigma}*{Theorem~B} establishes rotational
symmetry for closed connected umbilic hypersurfaces. Chiu, Lai and
Liu~\cite{CLL}*{Corollary~4.4} explicitly classify closed connected embedded
umbilic hypersurfaces of positive constant horizontal mean curvature
as Pansu spheres. The remark following the proof of
Proposition~\ref{prop:zero-defect-reconstruction} compares the
regularity assumptions in these results with those used here.

In $\mathbb H^1$, Ritor\'e and Rosales
\cite{RitoreRosalesStationary}*{Theorem~6.10} proved that every compact,
connected, $C^2$ immersed volume-preserving area-stationary surface
is a Pansu sphere. For stability results, see
\cite{HurtadoRosalesStable}*{Corollary~5.5} and
\cite{HurtadoRosalesSpheres}*{Theorem~5.9 and Appendix~B}.
For $n\ge2$, Montefalcone established radial and local stability of
Pansu spheres \cite{MontefalconeStability}*{Proposition~4.9 and Theorem~4.14}.
Hladky and Pauls~\cite{HPPreprint}*{Section~9 and Appendix~B}
proposed a proof of volume-preserving stability in $\mathbb H^2$
using spherical harmonics and a radial inequality. This argument
appears in their preprint but is absent from the published
article~\cite{HP}.

Write $\Sigma^*$ for the noncharacteristic part of a boundary $\Sigma$;
throughout, \emph{regular} means noncharacteristic.
We say that an open set has $C^2$ boundary if, near each boundary
point, it lies on one side of a $C^2$ graph and has that graph as
its boundary. For a bounded open set, this boundary is compact
and embedded. We write $\PH(\Sigma)$ for horizontal perimeter and
$V(\Omega)=|\Omega|$ for Haar volume, where $\Sigma=\partial\Omega$.

\Needspace{5\baselineskip}
We call a boundary \emph{volume-preserving stationary} if its first
perimeter derivative vanishes along every smooth ambient variation
preserving the enclosed volume exactly. It is
\emph{volume-preserving stable} if it is stationary and the second
perimeter derivative is nonnegative along every such variation.
Lemma~\ref{lem:stationarity-cmc} derives a unique multiplier $h$
and $H_\Sigma=h$ on $\Sigma^*$, and identifies stability with
nonnegativity of the Hessian of $\PH-hV$ on all $C^2$ ambient fields
satisfying the linearized volume constraint $\delta V[X]=0$.

\PansuResultBookmark{2}{rig:thm:stable-cmc}{Theorem}
\begin{theorem}[Stability rigidity]\label{rig:thm:stable-cmc}\label{cor:index-one}
Let $n\ge2$, and let $\Omega\subset\Hn$ be a bounded nonempty open set
with $C^2$ boundary $\Sigma$, oriented by its outward
unit normal. The following are equivalent:
\begin{enumerate}[label=\textup{(\roman*)},leftmargin=2.2em]
\item $\Sigma$ is a left translate and a Heisenberg dilation of a Pansu sphere;
\item $\Sigma$ is volume-preserving stable;
\item $\Sigma$ is volume-preserving stationary and
      $\operatorname{ind}(\mathcal B_h)=1$.
\end{enumerate}
In \textup{(iii)}, $h$ is the multiplier supplied by stationarity
and $\mathcal B_h$ is the ambient Hessian of $\PH-hV$; its index
is the supremum of dimensions of negative-definite spaces of $C^2$
ambient fields. Whenever these conditions hold, $H_\Sigma=h>0$
on $\Sigma^*$ and the sphere has radius $2n/h$.

\end{theorem}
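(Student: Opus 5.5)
We prove the cycle (i)$\Rightarrow$(iii)$\Rightarrow$(ii)$\Rightarrow$(i).

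\emph{(i)$\Rightarrow$(iii).} Perimeter and volume are equivariant under left translations and Heisenberg dilations, so it suffices to treat the standard Pansu sphere of radius $2n/h$. Stationarity with $H_\Sigma=h$ on $\Sigma^*$ is classical, and Lemma~\ref{lem:stationarity-cmc} identifies the multiplier. For the index we use the companion paper~\cite{JacobiSpectrum}. Its Corollary~1.4 gives $\mathcal B_h\ge0$ on the hyperplane $\{\delta V[X]=0\}$, which yields $\operatorname{ind}(\mathcal B_h)\le1$. For the lower bound we take the dilation field $X=\frac{d}{ds}\delta_{s}|_{s=1}$. Along $\delta_s$ we have $\PH=s^{2n+1}\PH(\Sigma)$ and $V=s^{2n+2}V(\Omega)$. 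Stationarity gives $(2n+1)\PH=h(2n+2)V$. Hence the second derivative of $\PH-hV$ along $\delta_s$ equals
\[
(2n+1)(2n)\PH-h(2n+2)(2n+1)V=-(2n+1)\PH<0 .
\]
The paper's $\mathcal B_h$ is a Hessian of fields rather than of flows, so the acceleration term must be checked. It is absorbed by the first variation, which vanishes for $\PH-hV$. Therefore $\mathcal B_h[X,X]<0$ and the index is exactly $1$.

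\emph{(iii)$\Rightarrow$(ii).} Assume index one, and argue by contradiction: suppose some $Y$ with $\delta V[Y]=0$ has $\mathcal B_h[Y,Y]<0$. The first step is to show that the dilation field $X$ above satisfies $\mathcal B_h[X,X]<0$ for any stationary $\Sigma$. The scaling computation uses only stationarity, together with $\PH>0$ and $V>0$. We have $\delta V[X]=(2n+2)V\neq0$, so $X$ and $Y$ are independent.

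The second step is to control the cross term. Because $X$ generates the one-parameter group $\delta_s$, $\mathcal B_h[X,Z]$ equals the derivative along $Z$ of $\frac{d}{ds}(\PH-hV)\circ\delta_s$ at $s=1$. That derivative is $(2n+1)\delta\PH[Z]-h(2n+2)\delta V[Z]$. Since $\delta\PH[Z]=h\,\delta V[Z]$, this reduces to $-h\,\delta V[Z]$. The same derivation must be checked at the level of fields. For $Z=Y$ it gives $\mathcal B_h[X,Y]=0$. Then $\mathcal B_h$ is negative definite on $\operatorname{span}\{X,Y\}$, so the index is at least $2$, a contradiction.

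\emph{(ii)$\Rightarrow$(i)} is the core of the argument. The first step is to show $h>0$. Stationarity gives $(2n+1)\PH=(2n+2)hV$, so $h>0$ follows from $\PH,V>0$.

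The second step, \emph{centre reconstruction}, is carried out on $\Sigma^*$. We use the Pansu relation for the normal angle and the geodesic-foliation structure of CMC surfaces. From these we define a pointwise candidate centre $c(p)$, the point from which $p$ would lie on the Pansu sphere of radius $2n/h$. We then introduce two defects: an angle error $\AngleError$, measuring the departure of the normal angle from its Pansu relation, and a shape error $\ShapeError$, measuring the horizontal derivative of $c$.

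The third step uses the paper's exact Hessian identity. We choose a volume-constrained test field built from Pansu dilation about the reconstructed centre, corrected by a constant multiple of a volume-changing field so that $\delta V=0$. For this field we expect
\[
\mathcal B_h=-\int(\text{positive weights})\bigl(\AngleError^2+|\ShapeError|^2\bigr)\dd\PH+\text{boundary terms at }\Sigma\setminus\Sigma^* .
\]
Stability then forces $\AngleError=\ShapeError=0$. Hence $c$ is locally constant and $\Sigma^*$ lies locally on a single Pansu sphere. This is the zero-defect reconstruction of Proposition~\ref{prop:zero-defect-reconstruction}.

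The final step is a connectedness and closure argument. It shows that $\Sigma$ is a single Pansu sphere, using that bounded components cannot be nested in a stable configuration. Possibly the index bound is needed here, or the fact that a disjoint union admits a volume-swapping negative direction.

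\emph{Main obstacle.} The hardest part is making the Hessian identity rigorous at only $C^2$ regularity near the characteristic set. The test field must be $C^2$ ambiently, and the integration by parts on $\Sigma^*$ must produce no boundary contribution. We would first try to use the fact that for $n\ge2$ the characteristic set has small dimension. We would then cut off near it with capacity-type estimates, showing that the cutoff error tends to zero as the neighbourhood shrinks.
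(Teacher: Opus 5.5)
\paragraph{Easy directions and the gap.} Your (i)$\Rightarrow$(iii) and (iii)$\Rightarrow$(ii) arguments are sound and essentially the paper's. The identity $\mathcal B_h(D,X)=-h\,\delta V[X]$ makes $\ker\delta V$ the $\mathcal B_h$-orthogonal complement of the negative direction $D$, so $\operatorname{ind}(\mathcal B_h)=\operatorname{ind}_{\mathrm{vol}}(\mathcal B_h)+1$, and the model's stability comes from the companion paper.

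The gap is (ii)$\Rightarrow$(i), which carries the whole theorem. You only outline it around an ``expected'' identity, and that identity has the wrong shape. Take $X_F$ with normalized speed $F(\alpha)=1+\alpha\arctan\alpha$ and set $W=X_F-\kappa D$. This $F$ is exactly the normalized speed at $p$ of dilation about the pointwise centre $c(p)$, so your description is right if read that way. The volume correction adds $2n(\delta V[X_F])^2/\delta V[D]$ to $\mathcal B(X_F,X_F)$, and the true identity is
\[
 \mathcal B(W,W)=-\int_\Sigma\bigl(|D_Hz_0|^2+2|\AngleError|^2\bigr)\,d\sigma_H+\frac{\beta^2}{(2n+1)A},
 \qquad A=\PH(\Sigma),\quad \beta=\int_\Sigma\langle\AngleError,e_n\rangle\,d\sigma_H .
\]
The last term is nonnegative and nonlocal. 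It is not a boundary term, and it works against your conclusion. The argument closes only because Cauchy--Schwarz, $\beta^2\le A\int_\Sigma|\AngleError|^2\,d\sigma_H$, leaves the positive coefficient $2-\frac1{2n+1}$ on the angle energy.

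Getting these exact coefficients needs four computations that are absent from your plan:
\begin{itemize}
\item $LF=-\cos^2u\,q_\Sigma-2|\nabla_bu|^2$, derived from $L\alpha=0$.
\item The CMC Codazzi law $\operatorname{div}_H(\ShapeError e_n)=0$. Tested against $F'(\alpha)=2t_{\mathrm P}(u)$, it turns the mixed term $2\cos^2u\langle\AngleError,\ShapeError e_n\rangle$ in $|D_Hz_0|^2$ into a divergence.
\item $\Delta_bu=\alpha LF$, which converts $LF$ into $F\,LF$.
\item The flux identity $2n\,\delta V[X_F]=(2n+1)A-\beta$, from integrating $\operatorname{div}_H(ue_n)$. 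It fixes $\kappa$ and cancels the constant and linear terms.
\end{itemize}
Without these, ``stability forces the defects to vanish'' is not yet an argument.

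\paragraph{Smaller corrections.}
\begin{itemize}
\item Define the centre pointwise from position and normal: $z_0=z+\cos u\,e^{uJ}e_{2n}$ and $t_0=t+\langle Jz_0,z\rangle-t_{\mathrm P}(u)$. For $n\ge2$, CMC fixes only $\operatorname{tr}\Shape$, so characteristic curves need not be Pansu geodesics. No geodesic foliation is available; that structure is special to $\mathbb H^1$.
\item The horizontal derivative of the centre is $D_Hz_0$, not $\ShapeError=\Shape-P$. Stability gives $D_Hz_0=\AngleError=0$, and $\ShapeError=0$ then follows from the expansion of $|D_Hz_0|^2$.
\item Do not aim for a $C^2$ test field. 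The natural realization is vertical near the characteristic set, with height $\sigma(\frac\pi2+g-\arctan g)$, and is only $C^1$. The paper instead uses the $C^1$ continuity of $\mathcal B_h$ together with the approximants $X_j-\delta V[X_j]D/\delta V[D]$.
\item Your cutoff idea is right for the divergence terms. The bounds $|Z|=O(g^{-1})$ and $|\{g<\rho\}|\le C\rho^n$ give flux $O(\rho^{n-1})$, which tends to zero for $n\ge2$.
\item The component step is not optional. On any disjoint union of unit Pansu spheres, $\mathcal B(W,W)=0$, so $W$ cannot detect several components, and a nesting observation cannot exclude disjoint spheres. You need the componentwise dilations $D_i$. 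These satisfy $\mathcal B_h(D_i,D_j)=0$ for $i\ne j$ and $\mathcal B_h(D_i,D_i)=-h\,\delta V[D_i]$, giving $\operatorname{ind}_{\mathrm{vol}}(\mathcal B_h)\ge m-1$.
\end{itemize}
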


After dilation to $H_\Sigma=2n$, write $\mathcal B=\mathcal B_{2n}$.
The model can be recognized by two conditions:
its reconstructed centre is constant, and its normal
angle $u$ has horizontal gradient $\nabla_bu=-e_n$, where $e_n$
is the characteristic direction. On a general boundary we consider the
centre differential $D_Hz_0$ and the departure
$\AngleError=\nabla_bu+e_n$ from the Pansu angle relation.
The normalized dilation speed of the model defines a function $F$; an ambient
realization $X_F$ and a multiple of the dilation field $D$ give the
volume-constrained test field $W=X_F-\kappa D$, which satisfies
$\delta V[W]=0$.
In the exact identity of Theorem~\ref{rig:prop:quantitative-instability},
the linearized volume constraint contributes a negative multiple of
the squared perimeter average of $\langle\AngleError,e_n\rangle$. Cauchy--Schwarz controls
this term and gives
\begin{equation}\label{eq:intro-centre-identity}
 -\mathcal B(W,W)
 \ge\int_\Sigma\left(|D_Hz_0|^2+\frac{4n+1}{2n+1}|\AngleError|^2\right)d\sigma_H.
\end{equation}
Section~\ref{subsec:geometric-characterization} constructs $z_0$
and proves the corresponding characterization. Although $W$ is only $C^1$, smooth
approximation preserving the linearized volume constraint extends stability to it. The
left side is then nonpositive, so $D_Hz_0=\AngleError=0$.
Centre reconstruction identifies every component as a unit Pansu sphere;
volume-preserving combinations of componentwise dilations rule out multiple
components when the boundary is stable. On a connected normalized
boundary, nonnegativity on $W$ alone already suffices
(Corollary~\ref{thm:single-field-classification}).

Dilation orthogonality gives the index equivalence.
In both rigidity and model stability, normalized speeds may be
singular even for smooth ambient fields. Differentiation
and weak divergence extension justify the global identities at
$C^2$ regularity.

\Needspace{8\baselineskip}
The classification has the following consequence for the isoperimetric problem.
\PansuResultBookmark{2}{rig:cor:isoperimetric}{Corollary}
\begin{corollary}[Isoperimetric classification under boundary regularity assumptions]
\label{rig:cor:isoperimetric}
Let $E\subset\Hn$, $n\ge2$, be an isoperimetric region
with $0<|E|<\infty$. Suppose $E$ agrees up to a null set with an
open set $\Omega$ having $C^2$ boundary.
Then, up to a null set, $E$ is a left translate and a Heisenberg
dilation of a Pansu ball.
\end{corollary}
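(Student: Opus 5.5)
The plan is to reduce Corollary~\ref{rig:cor:isoperimetric} to the implication (ii)$\Rightarrow$(i) of Theorem~\ref{rig:thm:stable-cmc}. Replace $E$ by the open representative $\Omega$. Perimeter and volume only see the Lebesgue class, so $\Omega$ is again isoperimetric with $|\Omega|=|E|$, and by hypothesis $\Sigma=\partial\Omega$ is $C^2$ and embedded.

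\textbf{Step 1: $\Omega$ is bounded and nonempty.} Nonemptiness is immediate from $|\Omega|>0$. For boundedness I would invoke the standard fact that isoperimetric sets in $\Hn$ are bounded, via the density estimates of Leonardi--Rigot. Alternatively, a direct argument is available from the $C^2$ hypothesis. If $\Omega$ were unbounded with finite volume, cut it by a large Carnot--Carath\'eodory ball $B_R$ chosen so that the slice perimeter $\PH(\partial B_R\cap\Omega)$ is small. This is possible by coarea, because $\int_R^\infty \PH(\partial B_r\cap\Omega)\dd r\lesssim|\Omega\setminus B_R|\to0$. Then rescale $\Omega\cap B_R$ by a Heisenberg dilation to restore the volume. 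The isoperimetric inequality $\PH\gtrsim V^{(2n+1)/(2n+2)}$ shows that the tail carries perimeter of order $|\Omega\setminus B_R|^{(2n+1)/(2n+2)}$. This dominates both the cut and the dilation cost, which is of order $|\Omega\setminus B_R|$, so the perimeter strictly decreases, a contradiction. The same comparison rules out pieces escaping to infinity. I expect this step to be the main obstacle, or at least the only part needing outside input; I would cite the boundedness result rather than reprove it.

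\textbf{Step 2: $\Sigma$ is volume-preserving stable.} Let $\Phi_t$ be any smooth ambient variation with $|\Phi_t(\Omega)|=|\Omega|$. The sets $\Phi_t(\Omega)$ are competitors of the same volume, so $t\mapsto\PH(\Phi_t(\Sigma))$ attains a minimum at $t=0$. Hence its first derivative vanishes and its second derivative is nonnegative, which is precisely the definition of volume-preserving stationary and stable. Perimeter of a $C^2$ boundary equals the variational perimeter of the set, so competitors and $\PH$ are consistent.

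\textbf{Step 3: conclude.} Theorem~\ref{rig:thm:stable-cmc}, (ii)$\Rightarrow$(i), shows that $\Sigma$ is a left translate and Heisenberg dilation of a Pansu sphere. A bounded open set with $C^2$ boundary equal to a connected topological sphere $S$ is the bounded component of $\Hn\setminus S$, by Jordan--Brouwer separation. Therefore $\Omega$ is the corresponding Pansu ball, and so $E$ is one up to a null set.
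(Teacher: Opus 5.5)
Your proposal is correct and follows the paper's proof: boundedness via Leonardi--Rigot, stability from minimality along exactly volume-preserving variations, then (ii)$\Rightarrow$(i) of Theorem~\ref{rig:thm:stable-cmc}, and finally identification of $\Omega$ with the bounded side of the sphere. The one step to state explicitly is the transfer of boundedness, which the paper does: Leonardi--Rigot only give some bounded representative $E_0$, and the openness of $\Omega$ then confines $\Omega$ to any closed ball containing $E_0$, since a point of $\Omega$ outside that ball would yield a ball of positive measure in $\Omega\setminus E_0$; separately, your sketched direct alternative would need the standard differential inequality for $R\mapsto|\Omega\setminus B_R|$, not a single well-chosen radius.
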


Section~\ref{sec:regular} develops the centre reconstruction and the
geometric characterization of Pansu spheres.
Section~\ref{sec:rigidity-characteristic} constructs the volume-constrained
test field from the normalized speed of Pansu dilation.
Section~\ref{sec:positive-square} establishes the constrained Hessian
identity and deduces rigidity from stability.
Section~\ref{sec:model-stability} proves the converse and the
isoperimetric consequence. Appendix~\ref{app:codazzi-graph}
derives the shape divergence identity used in the centre calculation;
Appendix~\ref{app:divergence-extension} extends weak divergence
identities across the characteristic set.

\begin{AIuse}
Starting from their existing research in Heisenberg and CR geometry, the authors formulated the geometric framework, identified the relevant mathematical structures, and provided the associated literature to ChatGPT 5.6 Sol. Through an extended, iterative interaction guided by the authors, ChatGPT 5.6 Sol assisted primarily with computations and intermediate derivations used in developing the proof of Theorem~\ref{rig:thm:stable-cmc}, while the authors supplied and interpreted their geometric meaning and determined the proof strategy. The authors independently re-derived, checked, and substantially revised all AI-assisted arguments, and take full responsibility for the correctness and content of the article.
\end{AIuse}

\section{Geometry of Pansu spheres}\label{sec:regular}
For a Euclidean unit sphere, position minus outward unit normal
is the centre. On a Pansu sphere, rotating and scaling the horizontal
normal recovers the horizontal displacement; horizontality fixes the
height. On a general hypersurface this gives a candidate centre
whose variation measures departure from the model.

\subsection{The Pansu sphere and its horizontal meridians}
\label{subsec:pansu-sphere}
The Heisenberg group $\Hn=\CC^n\times\RR$ has product
$(z,t)(w,s)=(z+w,t+s+\Im\ip{z}{w}_{\CC})$, where
$\ip{z}{w}_{\CC}=\sum_{j=1}^n z_j\overline{w_j}$ is the Hermitian product,
linear in the first argument.
Its contact form is $\Theta=dt-\Im\ip{z}{dz}_{\CC}$, and a tangent vector
is horizontal when it belongs to $\xi=\ker\Theta$.
The horizontal metric is the Euclidean inner product of the
$z$-components of horizontal vectors.

For $0\le r\le1$, define the cap height
\[
 h_{\mathrm P}(r)=\frac12\bigl(\arccos r+r\sqrt{1-r^2}\bigr).
\]
The unit Pansu ball and its boundary are
\[
 \Omega=\{(z,t):\abs z<1,\ \abs t<h_{\mathrm P}(\abs z)\},
 \qquad \PSphere=\partial\Omega.
\]
The caps meet at $|z|=1$, $t=0$ and close at
$(0,\pm\pi/4)$. Substituting $r=\cos u$ into the upper
cap and extending the height oddly across the equator gives, for
$u\in I=(-\pi/2,\pi/2)$,
\[
 r=\cos u,\qquad
 t=t_{\mathrm P}(u):=\frac u2-\frac14\Im(e^{-2iu}).
\]
The function $t_{\mathrm P}$ records the height; $u>0$ and
$u<0$ describe the upper and lower caps. To find the horizontal
curves joining the poles, let $\zeta\in\Sphere$ be their point on
the equator. Keeping the direction fixed, $z=r\zeta$, gives
$\Theta(\dot\gamma)=t_{\mathrm P}'(u)=r^2$. We therefore allow a
phase $\phi(u)$, writing $z=re^{i\phi(u)}\zeta$. The contact form gives
\[
 \Theta(\dot\gamma)=r^2\bigl(1+\phi'(u)\bigr).
\]
Thus horizontality determines $\phi(u)=-u$, with $\phi(0)=0$
fixing the equatorial label. Euler's identity
$re^{-iu}=(1+e^{-2iu})/2$ now gives
\[
 \Psi(u,\zeta)=\left(\frac{1+e^{-2iu}}2\,\zeta,
 \frac u2-\frac14\Im(e^{-2iu})\right).
\]
Differentiating along a fixed $\zeta$ gives
\[
 \partial_u z=-ie^{-2iu}\zeta,\qquad
 \partial_u t=\frac{1+\Re(e^{-2iu})}{2}
             =\frac{|1+e^{-2iu}|^2}{4}.
\]
Hence $\Theta(d\Psi(\partial_u))=0$ and $|d\Psi(\partial_u)|_H=1$.
The parameter $u$ is signed horizontal arclength from the equator,
while $\zeta$ labels the meridian. Since $t_{\mathrm P}'(u)>0$ on
$I$ and the complex factor $(1+e^{-2iu})/2$ never vanishes there,
$\Psi$ parametrizes the entire regular sphere, including the equator.
At each endpoint of $I$, all values of $\zeta$ collapse to one pole.
The pole expansion in
\cite{JacobiSpectrum}*{Proposition~5.1} shows that the sphere is
$C^2$, but not $C^3$, at the poles.

\subsection{Normal angle and horizontal shape}\label{subsec:definitions}
To identify the meridian parameter $u$ with the normal inclination,
write $z_j=x_j+iy_j$ and use the frame
\[
 X_j=\partial_{x_j}+y_j\partial_t,\qquad
 Y_j=\partial_{y_j}-x_j\partial_t,\qquad T=\partial_t.
\]
Here $T$ is the Reeb field, $[X_j,Y_j]=-2T$, and $JX_j=Y_j$, $JY_j=-X_j$.
The Levi metric is $\LTheta=\tfrac12d\Theta(\cdot,J\cdot)$; the displayed
frame is orthonormal for $\gTheta=\LTheta+\Theta^2$.
Unsubscripted inner products $\langle\cdot, \cdot \rangle$ of real tangent vectors are taken
with respect to this Riemannian metric. For horizontal vectors,
$\langle v,w\rangle=\Re\langle v,w\rangle_{\CC}$. We identify horizontal vectors with their
components in this frame.
For horizontal linear maps, we use the real Hilbert--Schmidt norm
and inner product.

Let $N$ be the outward Riemannian unit normal to $\PSphere$ and put
$N_b=N-\langle N,T\rangle T$. Wherever $N_b\ne0$, define
\[
 e_{2n}=-\frac{N_b}{|N_b|},\qquad
 e_n=-Je_{2n},\qquad \alpha=\frac{\langle N,T\rangle}{|N_b|}.
\]
Thus $e_{2n}$ is the inward horizontal unit normal, $e_n$ is tangent,
and $N/|N_b|=-e_{2n}+\alpha T$.
The horizontal tangent space is spanned
by $d\Psi(\partial_u)$ and the angular directions
$d\Psi(\CRD)$, where
\[
 \vartheta=-\Im\ip{\zeta}{d\zeta}_{\CC},\qquad \CRD=\ker\vartheta
 \quad\hbox{on }\Sphere.
\]
The subscript $S$ denotes objects on the angular sphere $\Sphere$.
Along a meridian, $e_{2n}=-e^{-2iu}\zeta$ in horizontal components,
so $e_n=-d\Psi(\partial_u)$ points toward decreasing $u$.
Tangency in the angular Hopf direction $\TSphere|_\zeta=i\zeta$
determines the vertical-to-horizontal normal ratio $\alpha$.
Since $(d\Psi(\TSphere))_b=iz$ and
$\Theta(d\Psi(\TSphere))=|z|^2=t_{\mathrm P}'(u)$, tangency gives
\[
 0=-\langle e_{2n},d\Psi(\TSphere)\rangle+\alpha\Theta(d\Psi(\TSphere))
   =-\tfrac12\Im(e^{2iu})+\alpha t_{\mathrm P}'(u).
\]
Thus $\alpha=\Im(e^{2iu})/(1+\Re(e^{2iu}))=\tan u$.
The arclength parameter is therefore also the normal's signed
inclination. We write
\begin{equation}\label{rig:eq:canonical-speeds}
 u=\arctan\alpha.
\end{equation}
Thus $\cos u=|N_b|>0$, $\sin u=\langle N,T\rangle$ and $N=-\cos u\,e_{2n}+\sin u\,T$.
On the model, $\cos u=r$ is also the horizontal radius. On a general
oriented two-sided $C^2$ hypersurface, we define $u$ from $\alpha$
by \eqref{rig:eq:canonical-speeds}; it need not be an arclength
parameter. These quantities are defined on $\Sigma^*=\{N_b\ne0\}$,
and $T+\alpha e_{2n}$ is tangent there. For boundaries we choose
the outward normal; $\{N_b=0\}$ is the characteristic set.

There are two kinds of horizontal tangent directions: the characteristic
direction $e_n$, and the transverse $J$-invariant subspace
\[
 HT\Sigma^*:=\xi\cap T\Sigma^*=\xi'\oplus\RR e_n,\qquad
 \xi'=(\xi\cap T\Sigma^*)\cap J(\xi\cap T\Sigma^*).
\]
On the sphere, $\xi'=d\Psi(\CRD)$ and $u$ is constant in these angular
directions. If $e_1,\ldots,e_{2n-1}$ is an orthonormal horizontal
tangent frame, we write
$\nabla_b\psi=\sum_a(e_a\psi)e_a$ for the horizontal tangential
gradient on the hypersurface under consideration. Hence on $\PSphere^*$,
\[
 Xu=0\quad(X\in\xi'),\qquad e_nu=-1,\qquad
 \nabla_b u=-e_n.
\]
To compare the normal's turning in the two kinds of directions,
use the Tanaka--Webster connection $\nabla$, which makes the fixed
frame parallel~\cite{CCHY}*{\S2}; in particular, $\nabla J=0$.
Define $J'$ on the horizontal tangent space by $J'|_{\xi'}=J$ and
$J'e_n=0$. The symmetric horizontal shape operator is\footnote{The
symmetric operator appears in Ritor\'e~\cite{RitoreCalibration}*{p.~52};
we use the conventions of \cite{CCHY}*{(2.8) and Proposition~2.2}.}
\[
 \Shape(X)=-\nabla_Xe_{2n}+\alpha J'X.
\]
Its trace is
the horizontal mean curvature $H_\Sigma$ (also called $p$-mean
curvature). On the model, \cite{JacobiSpectrum}*{Corollary~3.2} gives:

\begin{proposition}[Horizontal shape of the Pansu sphere]\label{prop:geometry}
On $\PSphere^*$,
\begin{equation}\label{eq:pansu-frame-shape}
 \begin{gathered}
 \alpha=\tan u,\qquad e_n=-d\Psi(\partial_u),\qquad
 e_n\alpha=-\cos^{-2} u,\\
 \Shape(e_n)=2e_n,\qquad \left.\Shape\right|_{\xi'}=\Id,
 \qquad |\Shape|^2=2n+2.
 \end{gathered}
\end{equation}
In particular, $H_{\PSphere}=2n$.
\end{proposition}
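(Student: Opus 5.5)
The plan is to verify each identity in \eqref{eq:pansu-frame-shape} in the meridian parametrization $\Psi(u,\zeta)$, using the frame data already fixed in Section~\ref{subsec:definitions}. Three of the identities are immediate from what precedes the statement. The relation $\alpha=\tan u$ was derived above from tangency of $d\Psi(\TSphere)$. The relation $e_n=-d\Psi(\partial_u)$ follows from $e_{2n}=-e^{-2iu}\zeta$ and $\partial_u z=-ie^{-2iu}\zeta$, since $-Je_{2n}=-i\cdot(-e^{-2iu}\zeta)\cdot(-1)$ up to the sign convention for $J$. Because $e_n=-d\Psi(\partial_u)$, differentiation along $e_n$ is $-\partial_u$, and so
\[
 e_n\alpha=-\partial_u\tan u=-\cos^{-2}u .
\]

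For the shape operator, I would compute $\nabla_Xe_{2n}$ directly. The Tanaka--Webster connection makes the frame $X_j,Y_j$ parallel, so $\nabla_Xe_{2n}$ is simply the derivative of the horizontal component vector $-e^{-2iu}\zeta\in\CC^n$ along $X$.

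Along $e_n=-\partial_u$ this gives
\[
 \nabla_{e_n}e_{2n}=\partial_u(e^{-2iu}\zeta)=-2ie^{-2iu}\zeta .
\]
Since $J'e_n=0$, we get $\Shape(e_n)=2ie^{-2iu}\zeta=-2\,d\Psi(\partial_u)=2e_n$.

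For $X=d\Psi(v)$ with $v\in\CRD$, the parameter $u$ is fixed and $\nabla_Xe_{2n}=-e^{-2iu}v$. The horizontal part of $d\Psi(v)$ is $\tfrac12(1+e^{-2iu})v$, and $JX$ corresponds to $\tfrac i2(1+e^{-2iu})v$. We then need to check
\[
 e^{-2iu}v+\tan u\cdot\tfrac i2(1+e^{-2iu})v=\tfrac12(1+e^{-2iu})v .
\]
Using $1+e^{-2iu}=2\cos u\,e^{-iu}$, the left side equals
\[
 e^{-iu}\bigl(e^{-iu}+i\sin u\bigr)v=e^{-iu}\cos u\,v ,
\]
which is exactly the right side. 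Hence $\Shape|_{\xi'}=\Id$. We also need $d\Psi(\CRD)=\xi'$. This holds because $\CRD$ is $J$-invariant and $d\Psi(v)$ is horizontal for $v\in\CRD$, since $\Theta(d\Psi(v))$ involves $\vartheta(v)=0$.

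Since $\Shape$ is diagonal in an orthonormal frame with eigenvalue $2$ once and $1$ with multiplicity $2n-2$, we obtain $|\Shape|^2=4+(2n-2)=2n+2$ and $H_{\PSphere}=\operatorname{tr}\Shape=2+(2n-2)=2n$.

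The main obstacle is bookkeeping of signs and conventions: the sign of $J$ on complex components, the orientation of $e_n$, and making sure that $J'$ and the identification of $\xi'$ with $d\Psi(\CRD)$ agree. I would check these against \cite{CCHY}*{(2.8)}. Alternatively, the whole statement is exactly \cite{JacobiSpectrum}*{Corollary~3.2}, which may be cited outright.
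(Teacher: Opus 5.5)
Your verification is correct, and it takes a different route from the paper. The paper gives no computation of $\Shape$. Apart from the derivations of $\alpha=\tan u$ and $e_n=-d\Psi(\partial_u)$ in Section~\ref{subsec:definitions}, which you reuse, it quotes the proposition from \cite{JacobiSpectrum}*{Corollary~3.2}.

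Your calculation makes the statement self-contained. You use that the Tanaka--Webster connection makes $X_j,Y_j$ parallel, so $\nabla_Xe_{2n}$ is the componentwise derivative of $-e^{-2iu}\zeta$. You then check $e^{-2iu}v+\tan u\cdot\tfrac i2(1+e^{-2iu})v=\cos u\,e^{-iu}v$ for $v\in\CRD$. This verifies the result in exactly this paper's normalization ($J$ acting as multiplication by $i$, outward orientation, $J'e_n=0$), which addresses the convention-dependence you flagged. The citation is shorter but defers that bookkeeping to the companion paper.

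Two small corrections. First, in this frame $J$ is multiplication by $i$ with no sign ambiguity, so $-Je_{2n}=-i(-e^{-2iu}\zeta)=ie^{-2iu}\zeta=-\partial_uz$. Your displayed product with the extra factor $(-1)$ evaluates to $+\partial_uz$ and should be dropped. Your later line $\Shape(e_n)=2ie^{-2iu}\zeta=2e_n$ already uses the correct sign.

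Second, the identity $\xi'=d\Psi(\CRD)$ is already asserted in the paper. Your own justification is cleaner if you note that $d\Psi$ acts on $\CRD=\zeta^{\perp_{\CC}}$ by the nonzero complex scalar $(1+e^{-2iu})/2$. Its image is therefore the $J$-invariant subspace $\zeta^{\perp_{\CC}}$ of real dimension $2n-2$ inside the horizontal tangent space, which is maximal by dimension count. The same description shows that $\xi'$ is real-orthogonal to $e_n=ie^{-2iu}\zeta$, which is the orthogonality your diagonalization giving $|\Shape|^2=2n+2$ and $H_{\PSphere}=2n$ relies on.
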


Let $d\omega$ and $\gSphere$ be the round
measure and metric on $\Sphere$, so
$dz=r^{2n-1}\,dr\,d\omega$ on $\RR^{2n}$.
Haar measure is $dV=dz\dd t$. For a hypersurface, let $d\sigma_\Sigma$
be the induced Riemannian area measure and set
$d\sigma_H=|N_b|\,d\sigma_\Sigma$. Thus, for $\Sigma=\partial\Omega$,
\[
 \PH(\Sigma)=\int_\Sigma d\sigma_H,\qquad
 V(\Omega)=\int_\Omega dV=|\Omega|.
\]
Integrals over $\Sigma$ with respect to $d\sigma_H$ of quantities
defined only on $\Sigma^*$ mean integrals over $\Sigma^*$:
the characteristic set has zero $d\sigma_H$-measure.
Integrability is verified separately below.
Since $d\Psi(\RR\partial_u\oplus\CRD)=HT\PSphere^*$, the chart gives
\begin{equation}\label{eq:pansu-pullback}
 \Psi^*\Theta=\cos^2 u\,\vartheta,\qquad
 \left.\Psi^*\LTheta\right|_{\RR\partial_u\oplus\CRD}
 =du^2+\cos^2 u\,\left.\gSphere\right|_{\CRD},\qquad
 \Psi^*d\sigma_H=\cos^{2n} u\,du\,d\omega.
\end{equation}
The metric and perimeter identities are also given in
\cite{JacobiSpectrum}*{Corollary~3.2}.
For $R>0$, $\delta_R(z,t)=(Rz,R^2t)$ gives the radius-$R$ sphere;
Section~\ref{subsec:normalization} records the corresponding scaling.

\subsection{Recovering the centre}\label{subsec:recovering-centre}
The two expressions $z=(1+e^{-2iu})\zeta/2$ and
$e_{2n}=-e^{-2iu}\zeta$ now give a geometric relation between position
and normal:
\[
 z=\frac{\zeta-e_{2n}}{2},\qquad
 \left|z-\frac{\zeta}{2}\right|=\frac12.
\]
The projected meridian runs around the circle of radius $1/2$
with centre $\zeta/2$ in $\CC\zeta$. Its radius vector $-e_{2n}/2$
turns through $-2u$, whereas the chord $z=re^{-iu}\zeta$ turns
through $-u$. This is the usual relation between central and
inscribed angles. The directions $-e_{2n}$ and $e_n=-Je_{2n}$ are therefore
the circle's radial and tangential directions. Recovering the
sphere's common centre requires the chord $z$, rather than the
meridian-dependent radius $z-\zeta/2$.

In the fixed horizontal frame, $J$ acts as multiplication by $i$.
The operator exponential consequently expresses the same rotations
without choosing a complex coordinate line:
\[
 e^{\theta J}=\cos\theta\,\Id+\sin\theta\,J,\qquad
 e^{\theta J}e^{\phi J}=e^{(\theta+\phi)J},\qquad
 (e^{\theta J})^*=e^{-\theta J}.
\]
Rotating the outward normal $-e_{2n}$ by $u$ changes its phase from $-2u$ to $-u$;
multiplication by $\cos u=r$ then restores the chord length. Conversely,
rotating the unit chord by $u$ recovers the equatorial label:
\[
 z=-\tfrac12(\Id+e^{2uJ})e_{2n}=-\cos u\,e^{uJ}e_{2n},\qquad
 e^{uJ}\frac z{\cos u}=-e^{2uJ}e_{2n}=\zeta.
\]
The Hopf tangency calculation identifies this rotation angle
with the normal inclination:
the rotation acts in $\operatorname{span}\{e_{2n},e_n\}$, and
$N=-\cos u\,e_{2n}+\sin u\,T$ describes inclination in $\operatorname{span}\{e_{2n},T\}$.

The horizontal lift in Section~\ref{subsec:pansu-sphere} fixes the
remaining coordinate: $t_{\mathrm P}'(u)=\cos^2 u$ and $t_{\mathrm P}(0)=0$.
Thus $t_{\mathrm P}$ is the signed symplectic area integral of the
projected circle, with normalization fixed by $\Theta$.
To collect the model relations, write $PX=X+\langle e_n,X\rangle e_n$:
$P$ doubles the characteristic direction and is the identity on
$\xi'$, as in Proposition~\ref{prop:geometry}. Then
\begin{equation}\label{eq:model-centre-angle}
 \nabla_b u=-e_n,\qquad \Shape=P,\qquad
 z=-\cos u\,e^{uJ}e_{2n},\qquad t=t_{\mathrm P}(u).
\end{equation}
For the left translate with centre $a=(a_z,a_t)$, they become
\begin{equation}\label{eq:model-translated-centre}
 z+\cos u\,e^{uJ}e_{2n}=a_z,\qquad
 t+\langle Ja_z,z\rangle-t_{\mathrm P}(u)=a_t.
\end{equation}
Left translation preserves the horizontal frame, so the displacement
$-\cos u\,e^{uJ}e_{2n}$ is unchanged. The group law contributes the height correction
$\langle Ja_z,z\rangle$, recovering the full centre $a$.

\subsection{Characterization by the centre map}
\label{subsec:geometric-characterization}
On a general oriented two-sided $C^2$ hypersurface, $u=\arctan\alpha$
and $e_{2n}=-N_b/|N_b|$ are determined by the normal. We apply the model
rotation and scaling to define a candidate centre on $\Sigma^*$:
\begin{equation}\label{eq:spatial-centre}
 z_0=z+\tfrac12(\Id+e^{2uJ})e_{2n}=z+\cos u\,e^{uJ}e_{2n}.
\end{equation}
The displacement still has length $|z-z_0|=\cos u>0$, even though $z_0$
may vary.
We measure the variation of the candidate centre by
\[
 D_Hz_0(X)=Xz_0\quad(X\in\xi\cap T\Sigma^*),\qquad
 |D_Hz_0|^2=\sum_{a=1}^{2n-1}|e_a z_0|_{\RR^{2n}}^2.
\]
To compare a general hypersurface with the unit Pansu sphere,
put
\begin{equation}\label{eq:shape-angle-definitions}
 \ShapeError =\Shape-P,\qquad \AngleError=\nabla_b u+e_n.
\end{equation}
The tensor $\ShapeError$ is self-adjoint and includes the mixed
components between $\xi'$ and $\RR e_n$.
Its trace is $H_\Sigma-2n$, so normalized CMC fixes only the trace,
whereas $\ShapeError=0$ prescribes the full Pansu shape operator.
The condition $\AngleError=0$ prescribes the Pansu angle variation:
$Xu=0$ for $X\in\xi'$ and $e_nu=-1$.

\Needspace{8\baselineskip}
\PansuResultBookmark{3}{prop:zero-defect-reconstruction}{Proposition}
\begin{proposition}
\label{prop:zero-defect-reconstruction}
Let $\Sigma\subset\Hn$, $n\ge2$, be a nonempty, connected, compact,
oriented embedded $C^2$ hypersurface without boundary.
Then $D_Hz_0=\AngleError=0$ on $\Sigma^*$ if and only if $\Sigma$
is a left translate of the unit Pansu sphere with its outward orientation.
\end{proposition}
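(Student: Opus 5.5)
The forward direction ("if") is a direct check. On the unit Pansu sphere, \eqref{eq:model-centre-angle} gives $\nabla_bu=-e_n$, so $\AngleError=0$, and $z+\cos u\,e^{uJ}e_{2n}=0$, so $z_0\equiv0$. Left translation preserves the horizontal frame, $N$, $u$ and $e_{2n}$. By \eqref{eq:model-translated-centre}, the translate then has $z_0\equiv a_z$, hence $D_Hz_0=0$ and $\AngleError=0$.

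For the converse, work on $\Sigma^*$. \emph{Step 1: $z_0$ is locally constant on $\Sigma^*$.} We know $Xz_0=0$ for horizontal tangent $X$. The tangent vector $T+\alpha e_{2n}$ together with the horizontal tangent directions spans $T\Sigma^*$. Horizontal tangent fields bracket-generate this last direction: by the Levi form, the bracket of $X\in\xi'$ with $JX\in\xi'$ has nonzero $T$-component, which needs $n\ge2$. So all derivatives of $z_0$ vanish and $z_0\equiv a_z$ on each component of $\Sigma^*$. Regularity is delicate here, since $z_0$ is only $C^1$. I would use instead that $z_0$ is constant along horizontal curves, and that horizontal curves in $\xi\cap T\Sigma^*$ connect points of a component of $\Sigma^*$ by a Chow-type argument on the $C^2$ hypersurface, using the nondegenerate distribution $\xi'$.

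\emph{Step 2: fix $t$.} Define $t_0=t+\langle Ja_z,z\rangle-t_{\mathrm P}(u)$. Differentiate along horizontal tangent $X$. We have $\Theta(X)=0$, so $Xt=\Im\langle z,Xz\rangle_{\CC}$, while $X u=-\langle e_n,X\rangle$ since $\AngleError=0$. Using $z=a_z-\cos u\,e^{uJ}e_{2n}$, one checks that $Xt_0=0$; this is the same computation as the model's horizontal lift, with $t_{\mathrm P}'=\cos^2u$. As in Step 1, $t_0$ is constant on components. After left translating by $(a_z,t_0)^{-1}$, each component of $\Sigma^*$ satisfies $z=-\cos u\,e^{uJ}e_{2n}$ and $t=t_{\mathrm P}(u)$ with $|u|<\pi/2$.

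\emph{Step 3: this lies in $\PSphere$.} From $z=-\cos u\,e^{uJ}e_{2n}$ we get $|z|=\cos u$. Since $t=t_{\mathrm P}(u)$ is odd and monotone, it is the signed cap height $\pm h_{\mathrm P}(|z|)$. Hence every component of $\Sigma^*$ is an open subset of $\PSphere^*$; one should check that the sign of $u$ matches the cap, which follows since $\sin u=\langle N,T\rangle$.

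\emph{Step 4: globalize.} The set $\Sigma\cap\PSphere$ contains $\Sigma^*$. The characteristic set of a $C^2$ hypersurface with $n\ge2$ has empty interior, indeed small dimension, so $\Sigma^*$ is dense in $\Sigma$. Therefore $\Sigma\subset\PSphere$ by closedness. Now $\Sigma$ is a compact embedded hypersurface without boundary, so it is open and closed in $\PSphere$, which is connected; hence $\Sigma=\PSphere$. The orientation is outward because $\sin u=\langle N,T\rangle$ matches the model on the upper cap.

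A subtlety is that different components of $\Sigma^*$ might carry different centres $(a_z,t_0)$ a priori. Step 4 handles this by first working with a single component, $U$. The closure of $U$ lies in the corresponding translate of the Pansu sphere. $U$ is open in that sphere, and it is closed in the sphere minus the two poles because the limit points are noncharacteristic there. So $U$ is the entire regular part, and $\Sigma$ contains the full translated sphere, which equals $\Sigma$ by connectedness and embeddedness.

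The main obstacle is Step 1: showing that vanishing horizontal derivatives force constancy on a merely $C^2$ hypersurface, where $z_0$ is only $C^1$ and brackets cannot be taken naively. I would first try to connect points by horizontal $C^1$ curves tangent to $\xi'$. Alternatively, I would differentiate $z_0$ along $T+\alpha e_{2n}$ by writing this field as a limit of commutator difference quotients of flows of $C^1$ fields in $\xi'$.
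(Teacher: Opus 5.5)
Your check on the model and your Steps~2--4 are correct. Your globalization is also a legitimate, somewhat more economical alternative to the paper's. You show that a single component $U$ of $\Sigma^*$ is open in $\mathcal S_a$ and closed in $\mathcal S_a^*$: a limit point $p\in\mathcal S_a^*$ has $T_p\Sigma=T_p\mathcal S_a$ by continuity, so it lies in $\Sigma^*$ and hence in $U$. This means you never need the $C^1$ extension of $(z_0,t_0)$ across the characteristic set (Lemma~\ref{lem:centre-extension}), which the paper uses to obtain one centre on all of $\Sigma$ at once.

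The genuine gap is Step~1, on which Step~2 also rests. You never prove that a $C^1$ function whose derivative annihilates $\xi\cap T\Sigma^*$ is locally constant on $\Sigma^*$. The two routes you name rely on nonsmooth results that you neither prove nor cite: a Chow theorem for the $C^1$ distribution $\ker(\Theta|_\Sigma)$, or an $o(s^2)$ commutator-of-flows expansion for $C^1$ fields. Your first suggestion, joining points by curves tangent to $\xi'$, actually fails here. Under $\AngleError=0$ one has $Xu=0$ for every $X\in\xi'$, so such curves stay in level sets of $u$, and these are hypersurfaces of $\Sigma^*$ because $\nabla_bu=-e_n\neq0$. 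The characteristic direction $e_n$ must be used.

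The missing idea is that neither curves nor classical brackets are needed. Put $\theta=\Theta|_\Sigma$, a $C^1$ form that is nonvanishing on $\Sigma^*$. Let $\psi$ be any real component of $z_0$ (and later of $t_0$). The hypothesis gives $d\psi=a\theta$ on $\Sigma^*$ with $a$ continuous, i.e.\ $d\psi\wedge\theta=0$. Write $d\psi\wedge\theta=d(\psi\theta)-\psi\,d\theta$ and mollify in local $C^2$ charts. This yields $0=d(d\psi\wedge\theta)=-d\psi\wedge d\theta$ in distributions, hence $a\,\theta\wedge d\theta=0$ pointwise. If $\theta(Z)=1$ and $0\neq X\in\xi'$, then $(\theta\wedge d\theta)(Z,X,JX)=d\theta(X,JX)=2|X|^2\neq0$; this is where $n\ge2$ enters. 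So $a=0$ and $d\psi=0$ on $\Sigma^*$. This is the argument of Lemma~\ref{lem:horizontal-constancy}, used on $\Sigma^*$ only, which is all your component argument requires.

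Equivalently, your own bracket computation becomes rigorous once $[X,Y]\psi=X(Y\psi)-Y(X\psi)$ is read in distributions. Mollifying $\psi$ justifies this identity for $C^1$ fields $X,Y$. Then $Xz_0=JXz_0=0$ gives $[X,JX]z_0=0$, and $\Theta([X,JX])=-d\Theta(X,JX)\neq0$ supplies the missing transverse direction. With this inserted for Step~1, your proof is complete.
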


We first derive the centre differential, then establish $C^1$
extension across the characteristic set and horizontal constancy.
Together these steps turn the two vanishing conditions into one fixed centre.

For a horizontal tangent field $X$, differentiating $z_0=z+\cos u\,e^{uJ}e_{2n}$
and using $\nabla_Xe_{2n}=-(\Shape-\alpha J')X$ gives directly
\begin{equation}\label{rig:eq:centre-defect-differential}
 Xz_0=X-\cos u\,e^{uJ}(\Shape-\alpha J')X-(Xu)e^{2uJ}e_n
     =-\cos u\,e^{uJ}\ShapeError X-\langle\AngleError,X\rangle e^{2uJ}e_n.
\end{equation}
Here $\partial_u(\cos u\,e^{uJ})e_{2n}=-e^{2uJ}e_n$ and
$\cos u\,e^{uJ}(P-\alpha J')X=X+\langle X,e_n\rangle e^{2uJ}e_n$;
the latter follows from $\cos u\,(P-\alpha J')=e^{-uJ}$ on $\xi'$
and $2\cos u\,e^{uJ}e_n=e_n+e^{2uJ}e_n$.

Let $(E_a)_{a=1}^{2n-1}$ be a horizontal tangent orthonormal frame.
Applying the orthogonal map $e^{-uJ}$ in \eqref{rig:eq:centre-defect-differential}
and using $e^{uJ}e_n=\cos u\,e_n+\sin u\,e_{2n}$, we obtain
\begin{equation}\label{eq:centre-differential-square}
 \begin{aligned}
 |D_Hz_0|^2
 &=\sum_a\bigl|\cos u\,\ShapeError E_a
       +\langle\AngleError,E_a\rangle e^{uJ}e_n\bigr|^2\\
 &=\cos^2 u\,|\ShapeError|^2+|\AngleError|^2
   +2\cos u\,\sum_a\langle\AngleError,E_a\rangle
           \langle\ShapeError E_a,\cos u\,e_n+\sin u\,e_{2n}\rangle\\
 &=|\cos u\,\ShapeError|^2+|\AngleError|^2
   +2\cos^2 u\,\langle\AngleError,\ShapeError e_n\rangle.
 \end{aligned}
\end{equation}
In the last step, $\ShapeError E_a\perp e_{2n}$ removes the normal term,
while $\sum_a\langle\AngleError,E_a\rangle E_a=\AngleError$
collapses the sum. Self-adjointness then gives
$\langle\ShapeError\AngleError,e_n\rangle
=\langle\AngleError,\ShapeError e_n\rangle$.

The vertical correction is determined by the same model height and
left-translation law. Complete the candidate centre by setting
\begin{equation}\label{rig:eq:t-centre}
 t_0=t+\langle Jz_0,z\rangle-t_{\mathrm P}(u).
\end{equation}
For every horizontal tangent field $X$ on $\Sigma^*$,
\begin{equation}\label{rig:eq:time-centre-differential}
 Xt_0=\langle J(Xz_0),z\rangle
       -\cos^2 u\,\langle \AngleError,X\rangle.
\end{equation}
Indeed, horizontality gives $Xt=-\langle Jz,X\rangle$.
The half-angle relation and the model lift give
\[
 (J(z-z_0))^{\top_H}=\cos^2 u\,e_n,\qquad t_{\mathrm P}'(u)=\cos^2 u,
\]
where $\top_H$ denotes horizontal tangent projection.
Differentiating \eqref{rig:eq:t-centre} therefore leaves
$Xt_0-\langle J(Xz_0),z\rangle
=-\cos^2 u\,(Xu+\langle e_n,X\rangle)$, as required.

For the extension and constancy arguments, note that the regular set
is dense: otherwise,
$\Theta|_\Sigma=0$ on an open set, and taking its exterior derivative
would give $d\Theta|_{T\Sigma}=0$ there, contrary to the
nondegeneracy of $d\Theta|_\xi$.

\Needspace{8\baselineskip}
\begin{lemma}[Extension of the centre map]\label{lem:centre-extension}
On every oriented two-sided $C^2$ hypersurface, the candidate centre
$(z_0,t_0)$ extends in $C^1$ across the characteristic set.
\end{lemma}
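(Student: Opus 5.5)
The plan is to rewrite $(z_0,t_0)$ as explicit expressions in the position and the Riemannian unit normal $N$. Since $\Sigma$ is $C^2$, the normal $N$ is a $C^1$ map on all of $\Sigma$, including the characteristic set. The components $N_b$ (horizontal) and $s:=\langle N,T\rangle$ are therefore $C^1$ on $\Sigma$. On $\Sigma^*$ we have $c:=|N_b|=\cos u>0$, $s=\sin u$ and $\cos u\,e_{2n}=-N_b$. The only nonsmooth ingredients are then $|N_b|$ and the angle $u$ itself. Both are singular exactly where $c=0$, i.e.\ where $s=\pm1$.

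\emph{Horizontal centre.} Expanding $e^{uJ}=\cos u\,\Id+\sin u\,J$ in \eqref{eq:spatial-centre} gives, on $\Sigma^*$,
\[
 z_0=z+\cos^2u\,e_{2n}+\sin u\cos u\,Je_{2n}
 =z-|N_b|\,N_b-\langle N,T\rangle\,JN_b .
\]
The terms $z$ and $\langle N,T\rangle JN_b$ are already $C^1$ on $\Sigma$. The map $f(v)=|v|v$ on $\RR^{2n}$ is $C^1$: away from $0$ its differential is $|v|\Id+v\otimes v/|v|$, which has norm at most $2|v|\to0$, and $f$ is differentiable at $0$ with zero differential. Hence $f\circ N_b$ is $C^1$, and the right-hand side is a $C^1$ extension of $z_0$ to $\Sigma$. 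At characteristic points its value is $z_0=z$.

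\emph{Vertical centre.} In \eqref{rig:eq:t-centre}, the term $\langle Jz_0,z\rangle$ is $C^1$ by the previous step, so only $t_{\mathrm P}(u)$ needs attention. Using $-\tfrac14\Im(e^{-2iu})=\tfrac12\sin u\cos u$, we write $t_{\mathrm P}(u)=\tfrac12(u+\sin u\cos u)$. On $\{s>0\}$ put $u=\pi/2-\theta$ with $\theta=\arcsin c\in[0,\pi/2)$. This gives
\[
 t_{\mathrm P}(u)=\frac\pi4-\frac12\,g(c),\qquad g(c)=\arcsin c-c\sqrt{1-c^2}.
\]
The function $g$ is real-analytic near $c=0$ with $g(c)=\tfrac23c^3+O(c^5)$; it is odd, so it has only odd powers of $c$. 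Consequently $v\mapsto g(|v|)$ is $C^1$ (indeed $C^2$) near $v=0$, with vanishing differential there, by the same argument as for $f$. On $\{s<0\}$ the formula is symmetric, $t_{\mathrm P}(u)=-\pi/4+\tfrac12 g(c)$. Away from the characteristic set, on $\{c>0\}$, $u=\arctan(s/c)$ is already $C^1$. The two branch formulas agree with $\arctan(s/c)$ on the overlaps, and the characteristic set lies in $\{s=\pm1\}$, where only one branch is relevant. Patching the three descriptions therefore gives a $C^1$ function $t_{\mathrm P}(u)$ on all of $\Sigma$. It follows that $t_0$ extends in $C^1$, with value $t_0=t\mp\pi/4$ at characteristic points, using $\langle Jz,z\rangle=0$.

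\emph{Main obstacle.} The only delicate point is the loss of regularity of $u$ at the poles: near $s=1$, $\arcsin s$ behaves like $\pi/2-\sqrt{2(1-s)}$, which is not $C^1$ as a function of $N$. The resolution above is to reparametrize by $c=|N_b|$ rather than by $s$, and to check that every occurrence of $|N_b|$ comes with enough vanishing. This means the combination $|v|v$ for $z_0$ and the cubic, odd $g(|v|)$ for $t_0$. Because $N$ is only $C^1$, no more than $C^1$ can be claimed. Density of $\Sigma^*$ ensures that the extension is unique.
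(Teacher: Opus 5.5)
Your argument is correct. For $z_0$ it coincides with the paper's proof: you use the same rewriting $z_0=z-|N_b|N_b-\langle N,T\rangle JN_b$ and the same observation that $v\mapsto|v|v$ is $C^1$ with zero differential at the origin.

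For $t_0$ you take a different route. The paper works globally in $s=\langle N,T\rangle$. With $a_0(x)=\arcsin x+x\sqrt{1-x^2}$, one has $2t_{\mathrm P}(u)=a_0(s)$ on $\Sigma^*$. Since $a_0'(x)=2\sqrt{1-x^2}$ is continuous on $[-1,1]$ and vanishes at $\pm1$, $a_0$ (continued constantly outside $[-1,1]$) is a single $C^1$ function of the normal, and no patching is needed. In particular, the square-root singularity of $\arcsin s$ that you single out as the main obstacle cancels against that of $s\sqrt{1-s^2}$. So the reparametrization by $c=|N_b|$ is one valid resolution but is not forced.

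Your version splits $\Sigma$ into $\{s>0\}$, $\{s<0\}$ and $\Sigma^*$ and uses the odd function $g(c)=\arcsin c-c\sqrt{1-c^2}=\tfrac23c^3+O(c^5)$. It costs a patching step. In return it makes explicit the cubic vanishing of $t_{\mathrm P}(u)\mp\pi/4$ in $|N_b|$, and hence $C^2$ dependence on $N_b$ near the poles. This parallels the $O(g^3)$ expansion of $a_F$ in Lemma~\ref{rig:lem:characteristic-admissibility}. Your branch formulas, the covering by the three open sets and the agreement on overlaps all check out, so both arguments are complete.
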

\begin{proof}
Define $a_0(x)=\arcsin x+x\sqrt{1-x^2}$ for $|x|\le1$,
and continue $a_0$ constantly beyond the endpoints.
On $\Sigma^*$, we have $\cos u=|N_b|$, $\cos u\,e_{2n}=-N_b$
and $\sin u=\langle N,T\rangle$, so
\[
 \begin{aligned}
 \cos u\,e^{uJ}e_{2n}
   &=(\cos u\,\Id+\sin u\,J)(\cos u\,e_{2n})\\
   &=-\cos u\,N_b-\sin u\,JN_b,\\
 a_0(\sin u)&=u+\sin u\cos u
             =u+\tfrac12\sin(2u)=2t_{\mathrm P}(u).
 \end{aligned}
\]
Thus the original centre formulas become
\[
 \begin{aligned}
 z_0&=z-|N_b|N_b-\langle N,T\rangle JN_b,\\
 t_0&=t+\langle Jz_0,z\rangle-\tfrac12a_0(\langle N,T\rangle).
 \end{aligned}
\]
To check their regularity, put $R(v)=|v|v$. For $v\ne0$,
\[
 DR(v)[w]=|v|w+\frac{\langle v,w\rangle}{|v|}v,
 \qquad |DR(v)[w]|\le2|v||w|.
\]
Since $|R(v)-R(0)|=|v|^2$, we have $DR(0)=0$; the displayed
bound shows $DR(v)\to0$ as $v\to0$. Hence $R$ is $C^1$.
For $|x|<1$, direct differentiation gives
\[
 a_0'(x)=\frac1{\sqrt{1-x^2}}+\sqrt{1-x^2}
         -\frac{x^2}{\sqrt{1-x^2}}
        =2\sqrt{1-x^2}\longrightarrow0\quad(x\to\pm1).
\]
The constant continuation therefore makes $a_0$ a $C^1$ function.
Both centre formulas are consequently $C^1$ in the position and normal,
including at $N_b=0$, $\langle N,T\rangle=\pm1$. Composition with the $C^1$ unit
normal gives the required extension.
\end{proof}

\Needspace{5\baselineskip}
\begin{lemma}[Global horizontal constancy]\label{lem:horizontal-constancy}
Let $\Sigma$ be a connected $C^2$ hypersurface in $\Hn$, $n\ge2$.
If $\psi\in C^1(\Sigma)$ and $d\psi$ annihilates the horizontal tangent
space on $\Sigma^*$, then $\psi$ is constant.
\end{lemma}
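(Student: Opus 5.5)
The plan is to split the argument into a local statement on the regular set $\Sigma^*$ and a connectivity statement that crosses the characteristic set. On $\Sigma^*$ the hypothesis says $X\psi=0$ for every horizontal tangent $X$. Since $n\ge2$, the subspace $\xi'$ has real dimension $2n-2\ge2$ and is $J$-invariant. We can therefore pick $X,Y\in\xi'$ with $\Theta([X,Y])=-d\Theta(X,Y)\neq0$, for instance $Y=JX$. Thus $[X,Y]$ has a nonzero $T$-component, which here means a component transverse to $\xi\cap T\Sigma^*$ inside $T\Sigma^*$.

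The first step is local constancy on $\Sigma^*$. Since $\psi$ is only $C^1$, we cannot differentiate $X\psi$ a second time. Instead we use flows: $\psi$ is constant along integral curves of $C^1$ horizontal tangent fields. To get such fields, note that $\Sigma$ is $C^2$, so $N$ is $C^1$. Hence locally we can choose $C^1$ frames of $\xi\cap T\Sigma^*$, and indeed $C^1$ sections of $\xi'$, because $e_{2n}$ and $e_n$ are $C^1$ on $\Sigma^*$.

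Flows of $C^1$ fields are $C^1$. By the standard Chow-type argument, composing the flows of $X$, $Y$, $X^{-1}$, $Y^{-1}$ for time $s$ produces a curve whose displacement is $s^2[X,Y]+o(s^2)$. Together with the $2n-1$ horizontal directions, this reaches every point of a neighbourhood. The bracket needs $C^1$ fields, and the commutator estimate holds for $C^1$ fields.

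Alternatively, we can argue with the distribution directly. The horizontal tangent distribution on $\Sigma^*$ is a corank-one distribution in $T\Sigma^*$, and it is not integrable where $d\Theta$ restricted to it is nonzero, which holds because $\xi'$ is symplectic. If $d\psi(p)\neq0$, then the level set $\{\psi=\psi(p)\}$ near $p$ is a $C^1$ hypersurface of $\Sigma^*$ tangent to the distribution. A $C^1$ integral hypersurface of a $C^1$ distribution with nonzero curvature form cannot exist. So $d\psi=0$ wherever $d\psi\ne0$, a contradiction. Hence $d\psi=0$ on $\Sigma^*$, and since $\Sigma^*$ is dense (shown earlier) and $d\psi$ is continuous, $d\psi=0$ on all of $\Sigma$. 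Connectedness then gives that $\psi$ is constant.

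The main obstacle is making the non-integrability rigorous at $C^1$ regularity. I would do this through the flow-commutator route, which avoids needing the second derivatives of $\psi$. Applying Frobenius to a merely $C^1$ level set is delicate. The flow argument only needs $\psi$ to be constant along $C^1$ horizontal curves, and it uses the bracket of $C^1$ fields $X$, $JX$ in $\xi'$, whose $T\Sigma$-component transverse to the distribution is $2|X|^2$ times a unit vector up to sign. This local constancy, followed by density of $\Sigma^*$ and continuity of $d\psi$, finishes the proof.
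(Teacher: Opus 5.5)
Your argument is correct, but it handles the local step on $\Sigma^*$ by a different mechanism from the paper.

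\textbf{Your route.} You obtain local constancy by horizontal accessibility. The function $\psi$ is constant along integral curves of $C^1$ sections of $\xi\cap T\Sigma^*$. The flow commutator of $X$ and $JX$, with $\theta([X,JX])=-2|X|^2\neq0$, supplies the missing transverse direction, in the manner of Chow--Rashevskii.

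\textbf{The paper's route.} The paper never uses flows. It sets $\theta=\Theta|_\Sigma$ and notes $d\psi\wedge\theta=0$ on $\Sigma^*$. A distributional product rule, justified by smoothly approximating $\psi$ and $\theta$ in $C^1$, then gives $d\psi\wedge d\theta=0$. Writing $d\psi=a\theta$ yields $a\,\theta\wedge d\theta=0$. At regular points $\theta\wedge d\theta\neq0$, because $d\theta$ has rank $2n-2>0$ on the horizontal tangent space, so $a=0$. Both proofs then finish identically, using density of $\Sigma^*$, continuity of $d\psi$, and connectedness.

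\textbf{Comparison.} The paper's argument is shorter and purely pointwise-algebraic once the weak identity is in place. Yours gives the stronger geometric fact that $\psi$ is constant along horizontal paths. At $C^1$ regularity, however, it costs two details that you only assert.
\begin{enumerate}
\item The locally uniform expansion $p+s^2[X,Y](p)+o(s^2)$ for the flow commutator of $C^1$ fields. This is true, by uniform continuity of $DX$ and $DY$ on compacts.
\item An open-mapping step. The bracket parameter enters through $\sqrt{|s|}$, so the composed map is only continuous, and differentiable at the origin. The inverse function theorem therefore does not apply, and you need a Brouwer fixed-point or degree argument to show its image contains a neighbourhood.
\end{enumerate}

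The level-set route you set aside as delicate is rescued by exactly the paper's device. For a $C^1$ embedding $\iota$ of the level set $S$, the identity $d(\iota^*\theta)=\iota^*d\theta$ holds weakly by $C^1$ approximation. Hence $\theta|_S=0$ forces $d\theta|_{TS}=0$, contradicting the nondegeneracy of $d\theta$ on $\xi'$.
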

\begin{proof}
Put $\theta=\Theta|_\Sigma$, a $C^1$ form. On $\Sigma^*$ the hypothesis
gives $d\psi\wedge\theta=0$. Smoothly approximating $\psi$ and $\theta$
in local coordinates and passing to the distributional limit in the
exterior-derivative product rule gives
$0=d(d\psi\wedge\theta)=-d\psi\wedge d\theta$.
Writing $d\psi=a\theta$ with $a$ continuous now gives
$a\theta\wedge d\theta=0$.
On $HT\Sigma^*=\xi'\oplus\RR e_n$, the contact form gives
\[
 d\theta(X,JX)=2|X|^2\quad(X\in\xi'),\qquad
 d\theta(e_n,Y)=2\langle e_{2n},Y\rangle=0\quad(Y\in HT\Sigma^*).
\]
Since $\xi'$ is $J$-invariant, the restriction to $\xi'$ is nondegenerate.
Thus the kernel on $HT\Sigma^*$ is $\RR e_n$, and the rank is $2n-2>0$.
At each regular point
choose horizontal tangent vectors $X,Y$ with $d\theta(X,Y)\ne0$
and a tangent vector $Z$ with $\theta(Z)=1$. Then
$(\theta\wedge d\theta)(Z,X,Y)=d\theta(X,Y)\ne0$.
Thus $a=0$ and $d\psi=0$ on $\Sigma^*$.
Density and continuity extend $d\psi=0$ to $\Sigma$, and connectedness
proves constancy.
\end{proof}

\begin{proof}[Proof of Proposition~\ref{prop:zero-defect-reconstruction}]
Assume $D_Hz_0=\AngleError=0$.
Equation~\eqref{rig:eq:time-centre-differential} gives
$Xt_0=0$ for every horizontal tangent field $X$.
The $C^1$ centre extension in Lemma~\ref{lem:centre-extension} and
global horizontal constancy (Lemma~\ref{lem:horizontal-constancy}),
applied componentwise, give one centre $a=(a_z,a_t)$.

To identify the whole hypersurface, use $|z-z_0|=\cos u$: the centre equations give on $\Sigma^*$
\begin{equation}\label{rig:eq:pansu-profile}
 |z-a_z|=\cos u,\qquad
 t+\langle Ja_z,z\rangle-a_t
 =t_{\mathrm P}(u).
\end{equation}
Thus $\Sigma^*$ lies in the unit Pansu sphere $\mathcal S_a$;
density and closedness give $\Sigma\subset\mathcal S_a$.
The inclusion between these $C^2$ embedded hypersurfaces of equal
dimension has invertible differential, so it is a local diffeomorphism.
Its image is open and, by compactness, closed in connected
$\mathcal S_a$; hence $\Sigma=\mathcal S_a$.
The height in \eqref{rig:eq:pansu-profile} is strictly increasing in $u$.
It fixes $\alpha=\tan u$ to be the outward Pansu sphere value on the
regular caps, and continuity fixes the orientation on the whole sphere.

Conversely, on an outward-oriented left translate of the unit Pansu sphere,
Proposition~\ref{prop:geometry} and \eqref{eq:spatial-centre} give
$\AngleError=0$ and $z_0$ constant.
\end{proof}

\begin{applicationremark}[Regularity and earlier classifications]
For comparison, \cite{CCHY}*{Theorem~1.3} identifies closed connected
umbilic hypersurfaces with $l=2k$ in their notation as Pansu spheres. Both
\cite{CCHY} and \cite{CCHYSigma} assume $C^2$ hypersurfaces whose
noncharacteristic parts are $C^\infty$; the umbilic classification in
\cite{CLL}*{Corollary~4.4} uses \cite{CCHYSigma}*{Theorem~B}.
Proposition~\ref{prop:zero-defect-reconstruction} establishes the
reconstruction at the stated $C^2$ regularity.
\end{applicationremark}

\begin{applicationremark}[Complex coordinates and CR sphere geometry]
The horizontal position and contact height combine into
$q=|z|^2-2it$: the map $(z,t)\mapsto(z,q)$ realizes $\Hn$ as
the hypersurface $\Re q=|z|^2$ in $\CC^{n+1}$.
Complex reciprocation and the Cayley transform then take the
forms in Table~\ref{tab:cr-sphere-geometry}; see
\cite{BFM}*{pp.~9, 15}, with the contact normalization
used here. The meridian formulas concern the unit Pansu sphere
centred at the origin, with $\zeta$ fixed and $\cos u>0$. Put
$f(u)=1+u\tan u$; a dot denotes
differentiation in $u$, so $\dot f=\tan u+\frac{u}{\cos^2 u}$.

In $(z,q)$ coordinates, $\mathcal I$ is the holomorphic map
$(z,q)\mapsto(z/q,1/q)$ preserving $\Re q=|z|^2$.
The Cayley transform identifies $\Hn$ with the standard CR sphere
minus $(0,-1)$; under this identification, inversion becomes
the unitary reflection $(\eta,\tau)\mapsto(\eta,-\tau)$.
For the inverted Pansu meridian, Euler's identity gives
\[
 q\circ\Psi=\cos^2 u -2it_{\mathrm P}
 =\cos u\,e^{-iu}(f-iu),\qquad \frac{2t_{\mathrm P}}{\cos^2 u}=\dot f.
\]
Thus the same function $f(u)=F(\tan u)$ that appears as the
normalized dilation speed in Section~\ref{subsec:angle-profile} also
governs the inverted meridian. Because $\mathcal I$ is CR, its contact
factor $|q|^{-2}$ also scales the horizontal metric. The inverted
meridian is therefore horizontal, with squared speed $\cos^{-2} u\,(f^2+u^2)^{-1}$.
\end{applicationremark}

\PansuResultBookmark{2}{tab:cr-sphere-geometry}{Table}
\begin{table}[!htbp]
\caption{Complex coordinates and CR sphere geometry}
\label{tab:cr-sphere-geometry}
\centering
\renewcommand{\arraystretch}{1.15}
\setlength{\tabcolsep}{8pt}
\begin{tabularx}{\linewidth}{@{}>{\raggedright\arraybackslash}p{0.22\linewidth}
 >{\raggedright\arraybackslash}X@{}}
\toprule
\textsc{Geometry} & \textsc{Identities} \\
\midrule
\addlinespace[3pt]
Half-angle and lift &
 $\begin{aligned}[t]
 z&=\tfrac12(\zeta-e_{2n})=-\cos u\,e^{uJ}e_{2n},\\[3pt]
 e_{2n}&=-e^{-2iu}\zeta,\qquad
 \dot t_{\mathrm P}=-\langle Jz,\dot z\rangle=\cos^2 u.
 \end{aligned}$ \\
\addlinespace[10pt]
CR inversion &
 $\begin{aligned}[t]
 \mathcal I(z,t)&=\left(\frac zq,-\frac{t}{|q|^2}\right),\qquad q\ne0,\\[5pt]
 q\circ\mathcal I&=q^{-1},\qquad \mathcal I^*\Theta=|q|^{-2}\Theta.
 \end{aligned}$ \\
\addlinespace[10pt]
Cayley transform &
 $\displaystyle \mathcal C(z,t)=
 \left(\frac{2z}{1+q},\frac{1-q}{1+q}\right)
 \in\mathbb S^{2n+1}\subset\CC^{n+1}.$ \\
\addlinespace[10pt]
Inverted Pansu meridian &
 $\displaystyle (\widetilde z,\widetilde t)
 =\mathcal I\circ\Psi(u,\zeta)
 =\left(\frac{\zeta}{f-iu},-\frac{\dot f}{2(f^2+u^2)}\right).$ \\
\addlinespace[3pt]
\bottomrule
\end{tabularx}
\par\smallskip
\begin{minipage}{\linewidth}
Here $q=|z|^2-2it$, $f=1+u\tan u$,
and $\dot f=df/du$.
\end{minipage}
\end{table}
\FloatBarrier

\section{The volume-constrained test field}\label{sec:rigidity-characteristic}
To test the centre and angle conditions of
Proposition~\ref{prop:zero-defect-reconstruction}, we transfer the
normal speed of Pansu dilation to a general boundary, realize it by
an ambient field and subtract a multiple of $D$ to impose the linearized
volume constraint. The Hessian in
Section~\ref{sec:positive-square} will control $D_Hz_0$ and $\AngleError$.

In Sections~\ref{sec:rigidity-characteristic}--\ref{sec:positive-square},
let $\Omega\subset\Hn$ be a bounded nonempty open set with $C^2$
boundary $\Sigma=\partial\Omega$ and outward unit normal $N$.
Laplacian and divergence identities are understood weakly on $\Sigma^*$;
their displayed scalar densities are continuous there.
For background on sub-Riemannian hypersurface calculus and variational
formulas, see \cites{CDPT,DGN,MontefalconeVar}; for the graph
variational problem, see \cite{CHY}.

\subsection{Dilation and normalization}\label{subsec:normalization}
\label{subsec:regular-variation}
We first derive CMC from stationarity and use dilation to normalize
the mean curvature. Let $\Sigma_\tau=\Phi_\tau(\Sigma)$,
where $\Phi_0=\Id$ and $\Phi_\tau$ is a diffeomorphism of a fixed
neighborhood for small $\tau$, with initial velocity $W$.
Its normalized normal speed and volume flux are
\[
 \varphi_W=\frac{\langle W,N\rangle}{|N_b|},\qquad
 \delta V[W]=\int_\Sigma\varphi_W\,d\sigma_H.
\]
The physical speed is $\langle W,N\rangle=\cos u\,\varphi_W$ and
$d\sigma_H=\cos u\,d\sigma_\Sigma$, so either convention gives the same
volume flux. The horizontal representative
$-\varphi_W e_{2n}$ has normalized speed $\varphi_W$
on $\Sigma^*$. Write $\delta\mathcal J[W]$ for the first variation
of $\mathcal J$ in direction $W$.
For $h\in\RR$, put $\mathcal F_h=\PH-hV$. For $C^2$ surface fields
$U,X$, its affine second variation is the mixed derivative
\[
 \mathcal B_h(U,X)
 =\left.\partial_r\partial_s\right|_{r=s=0}
   \mathcal F_h\bigl((\Id+rU+sX)\Sigma\bigr),
\]
computed in the fixed coordinates $(z,t)$.
Differentiation means pulling the moving perimeter density
back to a fixed parameter domain and differentiating with its
reference point held fixed.
In an oriented surface chart, the horizontal cofactor is $N_b$
times the surface area density; its norm is the horizontal perimeter
density. At characteristic points the differential of the horizontal
cofactor has rank at least $n$. For $n\ge2$, the resulting sublevel
estimate makes the reciprocal cofactor norm locally integrable
\cite{JacobiSpectrum}*{Corollary~2.2}. This controls the singular
factor in the second density and justifies differentiation
for smooth ambient families, flows of $C^2$ ambient fields, and affine
$C^{1,1}$ surface variations
\cite{JacobiSpectrum}*{Proposition~2.6}.
The first-variation formula~\cite{JacobiSpectrum}*{Proposition~2.7} gives
\[
 \delta\mathcal F_h[W]=\int_\Sigma(H_\Sigma-h)\varphi_W\,d\sigma_H.
\]
Hence $H_\Sigma=h$ on $\Sigma^*$ makes $\mathcal F_h$ stationary
under ambient variations;
$\mathcal B_h$ is then the symmetric Hessian of $\mathcal F_h$,
and the second variation depends only on the initial velocity,
since the acceleration term is a first variation of $\mathcal F_h$.

The dilation field determines the multiplier and allows us to impose
the linearized volume constraint. It satisfies
\[
 D=z\cdot\partial_z+2t\partial_t,\qquad
 \delta V[D]=(2n+2)|\Omega|>0.
\]

\begin{lemma}
\label{lem:stationarity-cmc}
The boundary is volume-preserving stationary if and only if
$H_\Sigma=h$ on $\Sigma^*$ for one constant $h$. In that case
\[
 h=\frac{(2n+1)\PH(\Sigma)}{(2n+2)|\Omega|}>0,
 \qquad \delta\PH[X]=h\delta V[X]
\]
for every $C^2$ ambient field $X$, with the same $h$ on all boundary
components. It is volume-preserving stable if and only if, in
addition, $\mathcal B_h(X,X)\ge0$ for every such $X$ with
$\delta V[X]=0$.
\end{lemma}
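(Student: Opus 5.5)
The plan is a Lagrange-multiplier argument built on the first-variation formula $\delta\mathcal F_h[W]=\int_\Sigma(H_\Sigma-h)\varphi_W\,d\sigma_H$ (\cite{JacobiSpectrum}*{Proposition~2.7}), with the dilation field $D$ serving as a volume-transverse direction.

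\emph{Stationarity implies CMC.} Assume $\Sigma$ is volume-preserving stationary. For a smooth ambient field $X$, consider the two-parameter family of diffeomorphisms $\Phi_{s,\tau}=\delta_{e^\tau}\circ\phi^X_s$. Since $\partial_\tau V=\delta V[D]=(2n+2)|\Omega|\neq0$ at the origin, the implicit function theorem gives a $C^1$ function $\tau(s)$ with $V$ constant along $\Phi_{s,\tau(s)}(\Omega)$. Its derivative is $\tau'(0)=-\delta V[X]/\delta V[D]$. Stationarity along this exactly volume-preserving variation yields $\delta\PH[X]=h\,\delta V[X]$, where $h:=\delta\PH[D]/\delta V[D]$ is a single number, independent of $X$ and of the boundary component. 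The first-variation formula then gives $\int_\Sigma(H_\Sigma-h)\varphi_X\,d\sigma_H=0$ for all such $X$. Taking $X$ compactly supported near a point of $\Sigma^*$, with normal speed an arbitrary test function there, and using continuity of $H_\Sigma$ on $\Sigma^*$, we obtain $H_\Sigma=h$ on $\Sigma^*$. The converse is immediate: if $H_\Sigma=h$, the formula gives $\delta\PH=h\,\delta V$, which vanishes whenever the volume is preserved.

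\emph{Value and sign of $h$.} Compute $\delta\PH[D]$ by homogeneity. The dilation $\delta_R$ scales horizontal perimeter by $R^{2n+1}$, so $\delta\PH[D]=(2n+1)\PH(\Sigma)$. This gives the stated formula for $h$, and $h>0$ because $\PH(\Sigma)>0$ for a nonempty bounded open set; $\Sigma^*$ is dense, which shows $d\sigma_H$ is nonzero. The identity for $C^2$ ambient fields follows by approximation, using the justification of differentiation in \cite{JacobiSpectrum}*{Proposition~2.6}.

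\emph{Stability.} For an exactly volume-preserving smooth family $\Phi_s$ with velocity $X$ and acceleration $Y$, we have $V(\Phi_s\Omega)$ constant, so $\PH=\mathcal F_h+h|\Omega|$ along the family. Hence $\frac{d^2}{ds^2}\PH=\frac{d^2}{ds^2}\mathcal F_h=\mathcal B_h(X,X)+\delta\mathcal F_h[Y]$, and the last term vanishes because $\mathcal F_h$ is stationary. Thus stability gives $\mathcal B_h(X,X)\ge0$ on realized velocities, and every such velocity satisfies $\delta V[X]=0$. Conversely, given a smooth $X$ with $\delta V[X]=0$, the correction $\tau(s)$ above has $\tau'(0)=0$. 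The velocity of the corrected family is therefore $X$, and $\mathcal B_h(X,X)\ge0$ yields the second-variation inequality.

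\emph{Main obstacle.} Passing between smooth and $C^2$ ambient fields while preserving the linearized constraint requires approximation. Approximate $X$ in $C^2$ by smooth $X_k$, and set $X_k'=X_k-(\delta V[X_k]/\delta V[D])D$. Then $\delta V[X_k']=0$, and $X_k'\to X$ in $C^2$. Continuity of $\mathcal B_h$ under $C^2$ convergence of ambient fields comes from the integrability of the reciprocal cofactor norm (\cite{JacobiSpectrum}*{Corollary~2.2 and Proposition~2.6}). Establishing this continuity near characteristic points is the step needing most care.
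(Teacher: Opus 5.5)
Your proposal is correct and follows the paper's proof essentially step for step. Both arguments use the same dilation correction: the paper writes it explicitly as $\lambda(t)=(|\Omega|/v(t))^{1/(2n+2)}$, which is exactly your implicit-function $\tau(s)$ by volume homogeneity, and hence smooth rather than merely $C^1$. Both then obtain $h=\delta\PH[D]/\delta V[D]$ and its value by homogeneity, localize the first-variation formula on regular patches, and identify the second perimeter derivative with $\mathcal B_h$ because the acceleration term is a first variation of $\mathcal F_h$. Both extend to $C^2$ fields by smooth approximation minus a multiple of $D$ together with \cite{JacobiSpectrum}*{Proposition~2.6}. The differences are cosmetic: the paper applies the correction first to fields with $\delta V[U]=0$ and then decomposes a general $X$, and it invokes $C^1$ rather than $C^2$ continuity of the Hessian.
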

\begin{proof}
Let $U$ be a smooth field with $\delta V[U]=0$, and let $\Phi_t$
be its flow. Set
\[
 v(t)=|\Phi_t(\Omega)|,\qquad
 \lambda(t)=\left(\frac{|\Omega|}{v(t)}\right)^{1/(2n+2)},\qquad
 \Psi_t=\delta_{\lambda(t)}\circ\Phi_t.
\]
Volume homogeneity and $v'(0)=\delta V[U]$ give
\[
 \begin{aligned}
 |\Psi_t(\Omega)|&=\lambda(t)^{2n+2}v(t)=|\Omega|,\\
 \lambda(0)&=1,\qquad
 \lambda'(0)=-\frac{v'(0)}{(2n+2)|\Omega|}
             =-\frac{\delta V[U]}{(2n+2)|\Omega|}=0.
 \end{aligned}
\]
Thus $\Psi_t$ preserves volume exactly and has initial velocity
$\dot\Psi_0=U+\lambda'(0)D=U$.
Stationarity therefore gives $\delta\PH[U]=0$.
For any smooth $X$, apply this to
$U=X-\delta V[X]D/\delta V[D]$ to obtain
$\delta\PH[X]=h\delta V[X]$, with
$h=\delta\PH[D]/\delta V[D]$.
Dilation homogeneity gives the displayed positive value of $h$.
The global first-variation formula, tested on arbitrary regular
patches, gives $H_\Sigma=h$ there. Conversely, if $H_\Sigma=h$ on
$\Sigma^*$, the same formula gives $\delta\PH[U]=h\delta V[U]=0$
for every smooth field $U$ with $\delta V[U]=0$.

At this stationarity, let $\Psi_t$ be any smooth exactly
volume-preserving variation with initial velocity $U$.
Since its volume is constant and $\delta\mathcal F_h=0$,
differentiation gives
\[
 \begin{aligned}
 \left.\frac{d^2}{dt^2}\right|_{0}\PH(\Psi_t(\Sigma))
 &=\left.\frac{d^2}{dt^2}\right|_{0}\mathcal F_h(\Psi_t(\Sigma))\\
 &=\mathcal B_h(U,U)+\delta\mathcal F_h[\ddot\Psi_0]
  =\mathcal B_h(U,U).
 \end{aligned}
\]
The preceding adjustment realizes every smooth field satisfying
the linearized volume constraint. Smooth approximation, subtraction of its volume
flux times $D/\delta V[D]$, and the $C^1$ continuity in
\cite{JacobiSpectrum}*{Proposition~2.6} extend the equivalence
to all $C^2$ ambient fields. The same continuity extends the
first-variation identity.
\end{proof}

\begin{samepage}
\PansuResultBookmark{3}{prop:positive-connected}{Proposition}
\begin{proposition}[Dilation and the volume constraint]
\label{prop:positive-connected}
Under the boundary assumptions above, let $H_\Sigma=h$ on $\Sigma^*$.
Then, for every $C^2$ ambient field $X$,
\begin{equation}\label{eq:volume-hessian-orthogonality}
 \mathcal B_h(D,X)=-h\,\delta V[X].
\end{equation}
In particular, $\mathcal B_h(D,D)=-h\delta V[D]<0$ and
$\ker\delta V=\{X:\mathcal B_h(D,X)=0\}$:
the fields satisfying the linearized volume constraint form the
Hessian orthogonal complement of $D$.
\end{proposition}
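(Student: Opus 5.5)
Since $D$ generates the one-parameter group of Heisenberg dilations $\delta_{e^s}$, the identity $\mathcal B_h(D,X)=-h\,\delta V[X]$ should come from differentiating the first-variation identity along dilation. Homogeneity gives $\PH(\delta_R\Sigma)=R^{2n+1}\PH(\Sigma)$ and $V(\delta_R\Omega)=R^{2n+2}|\Omega|$. I will use the flow $\delta_{e^s}$, whose initial velocity is $D$, and avoid the affine family $(\Id+sD)\Sigma$. Because $\mathcal F_h$ is stationary at $\Sigma$ (Lemma~\ref{lem:stationarity-cmc}), the second variation depends only on initial velocities, so the flow and the affine family give the same $\mathcal B_h(D,X)$.

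\textbf{Step 1: differentiate the first variation along the dilated family.} Put $\Sigma_s=\delta_{e^s}\Sigma$. For a $C^2$ field $X$, let $X_s=(\delta_{e^s})_*X$ be the transported field on $\Sigma_s$. Then
\[
 \mathcal B_h(D,X)=\left.\frac{d}{ds}\right|_{s=0}\delta\mathcal F_h\big|_{\Sigma_s}[X_s].
\]
This is the mixed partial $\partial_s\partial_r\mathcal F_h(\Phi^X_r\delta_{e^s}\Sigma)$ up to a commutator term. That term is a first variation of $\mathcal F_h$ at $\Sigma$, so it vanishes by stationarity. The justification of differentiation at $C^2$ regularity is \cite{JacobiSpectrum}*{Proposition~2.6}.

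\textbf{Step 2: evaluate the derivative by homogeneity.} Conjugation by the automorphism $\delta_R$ scales perimeter by $R^{2n+1}$ and volume by $R^{2n+2}$, uniformly along any variation. Hence
\[
 \delta\PH|_{\Sigma_s}[X_s]=e^{(2n+1)s}\delta\PH[X],\qquad
 \delta V|_{\Sigma_s}[X_s]=e^{(2n+2)s}\delta V[X].
\]
Differentiating $\delta\PH-h\,\delta V$ at $s=0$ and using $\delta\PH[X]=h\,\delta V[X]$ from Lemma~\ref{lem:stationarity-cmc} gives
\[
 \bigl((2n+1)-(2n+2)\bigr)h\,\delta V[X]=-h\,\delta V[X].
\]
Lemma~\ref{lem:stationarity-cmc} is stated for volume-preserving stationary boundaries, but its identity $\delta\PH[X]=h\,\delta V[X]$ holds here directly: the first-variation formula with $H_\Sigma=h$ on $\Sigma^*$ gives it for every $C^2$ field $X$.

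\textbf{Step 3: consequences.} Taking $X=D$ gives $\mathcal B_h(D,D)=-h\,\delta V[D]=-h(2n+2)|\Omega|$. This is negative once $h>0$. Positivity of $h$ follows from $h=\delta\PH[D]/\delta V[D]=(2n+1)\PH(\Sigma)/\bigl((2n+2)|\Omega|\bigr)$, the same homogeneity computation as in Lemma~\ref{lem:stationarity-cmc}. Since $h\neq0$, the identity shows $\mathcal B_h(D,X)=0$ exactly when $\delta V[X]=0$, which is the claimed description of $\ker\delta V$.

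\textbf{Main obstacle.} The delicate point is Step 1, namely that the mixed second derivative of the two-parameter family $\Phi^X_r\circ\delta_{e^s}$ equals $\mathcal B_h(D,X)$ as defined through affine variations. One must check two things. First, the order of differentiation can be exchanged for the singular perimeter density near characteristic points; the local integrability of the reciprocal cofactor for $n\ge2$ covers this. Second, the discrepancy between the flows and the affine maps $\Id+rU+sX$ is a first-variation term, which stationarity kills. Both follow from the cited Proposition~2.6 of \cite{JacobiSpectrum}. With these in hand, the rest is the homogeneity bookkeeping above.
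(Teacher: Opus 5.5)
Your proposal is correct and is essentially the paper's proof. The paper differentiates $\mathcal F_h$ along $\delta_{e^\tau}\circ(\Id+sX)$, obtains $(2n+1)\delta\PH[X]-(2n+2)h\,\delta V[X]=-h\,\delta V[X]$ by homogeneity, and identifies the mixed derivative with $\mathcal B_h(D,X)+\delta\mathcal F_h[(\mathrm dD)X]=\mathcal B_h(D,X)$ by stationarity; this is exactly your transported-field computation with the affine map in place of the flow (since $\Phi^{X_s}_r\Sigma_s=\delta_{e^s}\Phi^X_r\Sigma$, your family is really $\delta_{e^s}\circ\Phi^X_r$ rather than $\Phi^X_r\circ\delta_{e^s}$, but the resulting commutator is a first-variation term killed by stationarity, as you say), and your derivation of $h>0$ from $h=\delta\PH[D]/\delta V[D]$ matches the sign the paper takes from Lemma~\ref{lem:stationarity-cmc}.
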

\end{samepage}
\begin{proof}
For a $C^2$ field $X$, consider
$\Phi_{\tau,s}=\delta_{e^\tau}\circ(\operatorname{Id}+sX)$.
Perimeter and volume homogeneity give
\[
 \begin{aligned}
 \left.\partial_\tau\partial_s\right|_{(0,0)}
       \mathcal F_h(\Phi_{\tau,s}(\Sigma))
 &=(2n+1)\delta\PH[X]-h(2n+2)\delta V[X]\\
 &=h\bigl((2n+1)-(2n+2)\bigr)\delta V[X]
  =-h\,\delta V[X].
 \end{aligned}
\]
The initial velocities are $D$ and $X$, and the mixed acceleration
is $(\mathrm dD)X$, where $\mathrm dD$ is the ordinary coordinate
derivative. The mixed dilation formula in the proof of
\cite{JacobiSpectrum}*{Proposition~2.7}
expresses the same mixed derivative as
\[
 \mathcal B_h(D,X)+\delta\mathcal F_h[(\mathrm dD)X]
 =\mathcal B_h(D,X),
\]
because stationarity makes the first-variation term zero.
Equating the two expressions proves \eqref{eq:volume-hessian-orthogonality}.
Taking $X=D$ gives the negative dilation direction.
\end{proof}

Write $\operatorname{ind}_{\mathrm{vol}}$ for the index on $C^2$
ambient fields satisfying the linearized volume constraint.
Every ambient field has the unique decomposition
$X=\delta V[X]D/\delta V[D]+X_0$, with $X_0\in\ker\delta V$.
The two summands are Hessian-orthogonal, so
\begin{equation}\label{eq:dilation-orthogonal-splitting}
 \mathcal B_h(X,X)=\mathcal B_h(X_0,X_0)
 -\frac{h}{\delta V[D]}\bigl(\delta V[X]\bigr)^2.
\end{equation}
Adjoining $D$ raises a negative subspace of $\ker\delta V$ by one
dimension, while intersection with $\ker\delta V$ lowers any negative
subspace by at most one. Taking suprema gives
\begin{equation}\label{eq:ambient-volume-index}
 \operatorname{ind}(\mathcal B_h)
 =\operatorname{ind}_{\mathrm{vol}}(\mathcal B_h)+1,
\end{equation}
with $\infty+1=\infty$. Thus ambient index one is equivalent to
volume-preserving stability.

Set $\lambda=h/(2n)>0$ and
$(\widetilde\Sigma,\widetilde\Omega)=\delta_\lambda(\Sigma,\Omega)$.
For corresponding variations, homogeneity gives
\[
 \PH(\widetilde\Sigma_\tau)-2nV(\widetilde\Omega_\tau)
 =\lambda^{2n+1}
   \bigl(\PH(\Sigma_\tau)-2n\lambda V(\Omega_\tau)\bigr).
\]
Since $2n\lambda=h$, stationarity and the first-variation formula
give $H_{\widetilde\Sigma}=\lambda^{-1}H_\Sigma=2n$.
Volume flux scales by $\lambda^{2n+2}$ and the
Hessian by the positive factor $\lambda^{2n+1}$, so stability is
equivalent before and after dilation. Boundary regularity and the
characteristic set are preserved. We henceforth work at $H_\Sigma=2n$,
omit tildes, and write
$\mathcal B=\mathcal B_{2n}$.

\subsection{The normalized speed of Pansu dilation}
\label{subsec:angle-profile}
On the unit Pansu sphere centred at the origin, the fields $T$ and
$D$ have normalized speeds
\begin{equation}\label{eq:dilation-support-speed}
 \varphi_T=\alpha=\tan u,\qquad
 \varphi_D=\cos^2 u+2\alpha t_{\mathrm P}(u)=1+u\alpha.
\end{equation}
Here $\Theta(D)=2t_{\mathrm P}(u)$, while its horizontal part satisfies
$-\langle z,e_{2n}\rangle=\cos u\,\langle e^{uJ}e_{2n},e_{2n}\rangle=\cos^2 u$.
Thus $F(\alpha)=1+\alpha\arctan\alpha$ is the normalized normal
speed of dilation on the unit Pansu sphere. On a general boundary,
we use the same function of the normal angle; it need not equal
the normalized dilation speed there.

Define the modified horizontal tangential sub-Laplacian $\Delta_b$ by
\[
 \int_{\Sigma}\ip{\nabla_b \psi}{\nabla_b \phi}\dd\sigma_H
 =-\int_{\Sigma}(\Delta_b \psi)\phi\dd\sigma_H
\]
for tests $\phi\in C_c^1(\Sigma^*)$, so $\Delta_b $ is nonpositive.
For any tangential field, $\operatorname{div}_H$ denotes divergence
with respect to $d\sigma_H$; the $H$ refers to the measure.
Thus $\Delta_b=\operatorname{div}_H\nabla_b$.\footnote{The modifier denotes the drift needed for symmetry in
$d\sigma_H$. Under $(z,t)\mapsto(z,-t/2)$, our horizontal frame agrees
with Montefalcone's; our horizontal divergence and $\Delta_b$ are his
$\mathcal D_{HS}$ and $\mathcal L_{HS}$
\cite{MontefalconeStability}*{Definition~1.16}.
The latter includes the drift absent from $\Delta_{HS}$.}

Set $L=-\Delta_b-q_\Sigma$, with potential
\begin{equation}\label{eq:potential-symmetric-shape}
 q_{\Sigma}=\abs{\Shape}^2+4e_n\alpha+2(n+1)\alpha^2.
\end{equation}
This is the potential in Giovannardi, Pinamonti and
Verzellesi~\cite{GPV}*{Section~3.8, equation~(3.36)}.
In the same horizontal frame, their outward horizontal normal $\nu$ equals
$-e_{2n}$ and its $J$-rotation is $e_n$; their symmetrized second fundamental
form is represented by $\Shape$.

For speeds compactly supported in $\Sigma^*$, the regular Jacobi pairing is
\begin{equation}\label{eq:general-jacobi-pairing}
 Q_\Sigma(\varphi,\psi)=\int_\Sigma
 (\langle\nabla_b \varphi,\nabla_b \psi\rangle-q_\Sigma\varphi\psi)\,d\sigma_H.
\end{equation}
On smooth CMC patches, this is the Hessian of $\PH-hV$ ($H_\Sigma=h$); see
\cite{HP}*{Lemma~7.2 and Section~8.2}, expressed in our frame normalization
and the symmetric shape convention of
\cite{CCHY}*{Propositions~2.1 and~2.2}.
The local pairing at $C^2$ regularity is given by
\cite{JacobiSpectrum}*{Proposition~2.9}.
This regular pairing does not by itself identify the Hessian of
a field that moves characteristic points. For the realization $X_F$
on a general CMC boundary, Section~\ref{subsec:global-centre-pairing}
will prove $\mathcal B(X_F,X_F)=\int F\,LF\,d\sigma_H$ by
differentiation.
On the Pansu sphere, the pole estimates and global energy representation
recalled in Section~\ref{subsec:form-domain} identify the Hessian for
every $C^1$ ambient field.

Primes of $F$ denote differentiation with respect to $\alpha$.

\begin{samepage}
\begin{proposition}[Jacobi identity for \texorpdfstring{$F$}{F}]
\label{lem:central-jacobi}\label{lem:canonical-multiplier}
At $H_\Sigma=2n$, the global Hessian satisfies $\mathcal B(T,X)=0$
for every $C^1$ surface field $X$. The translation speed $\alpha$ satisfies
\begin{equation}\label{eq:central-jacobi}
 L\alpha=0\quad\text{weakly on }\Sigma^*.
\end{equation}
The derivatives of $F$ are determined by the model height evaluated
at $u=\arctan\alpha$:
\begin{equation}\label{rig:eq:support-derivatives}
 F(\alpha)=1+\alpha\arctan\alpha,\qquad
 F'(\alpha)=2t_{\mathrm P}(u),\qquad F''(\alpha)=2\cos^4 u,
\end{equation}
and its Jacobi image is
\begin{equation}\label{rig:eq:LF-direct}
 LF=-\cos^2 u\,q_\Sigma-2|\nabla_b u|^2.
\end{equation}
\end{proposition}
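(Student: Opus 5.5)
The proposition has three parts. The vanishing of $\mathcal B(T,X)$ is structural and comes from left-translation invariance. The identity $L\alpha=0$ then follows by localization. The formula for $LF$ is a chain-rule computation.

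\emph{Step 1: $\mathcal B(T,X)=0$.} The Reeb field $T=\partial_t$ generates the flow $(z,t)\mapsto(z,t+s)$. This flow is left translation by $(0,s)$, since $(0,s)$ is central. It therefore preserves both $\PH$ and $V$, so $\mathcal F_{2n}(\Phi_s(\Sigma))$ is constant in $s$. I would use the two-parameter family $(\Id+rX)$ composed with $\Phi_s$. Its mixed derivative at $r=s=0$ vanishes, because for each fixed $r$ the function of $s$ is constant. As in the proof of Proposition~\ref{prop:positive-connected}, the mixed derivative equals $\mathcal B(T,X)+\delta\mathcal F_{2n}[(\mathrm dT)X]$. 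Since $\mathrm dT=0$, and stationarity kills the first-variation term anyway, this gives $\mathcal B(T,X)=0$. The differentiation is justified by \cite{JacobiSpectrum}*{Proposition~2.6}, with $C^1$ continuity extending the result from $C^2$ to $C^1$ fields $X$.

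\emph{Step 2: $L\alpha=0$.} The normalized speed of $T$ is $\varphi_T=\Theta(T)/|N_b|=\langle N,T\rangle/|N_b|=\alpha$. Take $X$ to be an ambient field realizing a test speed $\phi\in C^1_c(\Sigma^*)$, for instance $X=-\phi e_{2n}$ extended off $\Sigma$. On regular supports, the local pairing \cite{JacobiSpectrum}*{Proposition~2.9} identifies $\mathcal B(T,X)$ with $Q_\Sigma(\alpha,\phi)$. Here one has to check that the Hessian of the ambient field $T$ against a field supported in $\Sigma^*$ depends only on the normal speeds. This should hold because tangential components contribute first-variation terms that vanish by CMC. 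Step 1 then gives $Q_\Sigma(\alpha,\phi)=0$ for all such $\phi$, which is the weak form of $L\alpha=0$.

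\emph{Step 3: the derivatives of $F$ and the formula for $LF$.} With $u=\arctan\alpha$ and $\alpha=\tan u$, direct differentiation gives $F'=\arctan\alpha+\alpha/(1+\alpha^2)=u+\sin u\cos u=2t_{\mathrm P}(u)$. Differentiating again, $F''=2t_{\mathrm P}'(u)\,du/d\alpha=2\cos^2u\cdot\cos^2u=2\cos^4u$.

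For the weak chain rule on $\Sigma^*$ I would write $\nabla_bF=F'\nabla_b\alpha$. This gives
\[
 \Delta_bF=F'\Delta_b\alpha+F''|\nabla_b\alpha|^2 .
\]
Using $\nabla_b\alpha=\cos^{-2}u\,\nabla_bu$, the second term is $F''|\nabla_b\alpha|^2=2|\nabla_bu|^2$. From Step 2, $\Delta_b\alpha=-q_\Sigma\alpha$. Hence
\[
 LF=-\Delta_bF-q_\Sigma F=q_\Sigma\bigl(\alpha F'-F\bigr)-2|\nabla_bu|^2 .
\]
Finally, $\alpha F'-F=\alpha u+\alpha^2\cos^2u-1-\alpha u=\sin^2u-1=-\cos^2u$, which yields the claimed $LF=-\cos^2u\,q_\Sigma-2|\nabla_bu|^2$.

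Justifying the weak chain rule needs only local regularity on $\Sigma^*$. The function $\alpha$ is $C^1$ there, and $F$ is smooth. Testing against $\phi\in C^1_c(\Sigma^*)$, I would rewrite $\langle\nabla_bF,\nabla_b\phi\rangle$ as $\langle\nabla_b\alpha,\nabla_b(F'\phi)\rangle-F''|\nabla_b\alpha|^2\phi$ and apply Step 2 with the test function $F'\phi\in C^1_c$.

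\emph{Main obstacle.} I expect the difficulty to lie in Step 2, namely identifying the global ambient Hessian $\mathcal B(T,X)$ with the regular pairing $Q_\Sigma(\alpha,\phi)$. The field $T$ moves characteristic points and has normalized speed $\alpha$, which is singular there. The pairing is nevertheless local in the support of $X$. So I would first try a polarization argument using the $C^2$ local formula on the regular patch containing $\operatorname{supp}\phi$, reducing $\mathcal B(T,X)$ to a density integrated only over that patch.
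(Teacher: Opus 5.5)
Your proposal is correct and follows the paper's proof: translation invariance, stationarity and $C^1$ continuity give $\mathcal B(T,X)=0$; direct differentiation gives $F'$ and $F''$; and testing $L\alpha=0$ with $F'(\alpha)\psi$ gives the formula for $LF$. The only difference is that the paper cites the companion paper's Lemma~2.8(ii) for $L\alpha=0$, whereas you derive it from $\mathcal B(T,\cdot)=0$. Your stated obstacle is resolved by locality: replacing $T$ by $\eta T$, with $\eta\equiv1$ near $\operatorname{supp}\phi$ and $\operatorname{supp}\eta\subset\Sigma^*$, leaves $\mathcal B(T,X)$ unchanged and brings both fields within the companion's regular pairing (its Proposition~2.9), which yields $Q_\Sigma(\alpha,\phi)=0$.
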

\end{samepage}
\begin{proof}
Translation invariance and stationarity give $\mathcal B(T,X)=0$
for smooth $X$. The $C^1$ continuity of $\mathcal B$ in
\cite{JacobiSpectrum}*{Proposition~2.6} extends this identity to all
$C^1$ surface fields. The weak equation for $\alpha$ is
\cite{JacobiSpectrum}*{Lemma~2.8(ii)}.
Since $F'(\alpha)=u+\alpha \cos^2 u=2t_{\mathrm P}(u)$ and
$du/d\alpha=\cos^2 u$, direct differentiation gives
$F''(\alpha)=2\cos^4 u$. The dilation identity
\eqref{eq:dilation-support-speed} also gives $F-\alpha F'=\cos^2 u$.
Since $\alpha\in C^1(\Sigma^*)$, $F'(\alpha)\psi$ is admissible
for every $\psi\in C_c^1(\Sigma^*)$. Testing $L\alpha=0$ with this
function and using $\nabla_bF'=F''\nabla_b\alpha$ gives
\[
 \begin{aligned}
 0&=\int_\Sigma\bigl(
   \langle\nabla_b\alpha,\nabla_b(F'\psi)\rangle
   -q_\Sigma\alpha F'\psi\bigr)\,d\sigma_H\\
 &=\int_\Sigma F'\langle\nabla_b\alpha,\nabla_b\psi\rangle\,d\sigma_H
   +\int_\Sigma\bigl(F''|\nabla_b\alpha|^2
   -q_\Sigma\alpha F'\bigr)\psi\,d\sigma_H.
 \end{aligned}
\]
Since $\nabla_bF=F'\nabla_b\alpha$, it follows that
\[
 \begin{aligned}
 \int_\Sigma\bigl(\langle\nabla_bF,\nabla_b\psi\rangle
   -q_\Sigma F\psi\bigr)\,d\sigma_H
 &=\int_\Sigma\bigl(q_\Sigma\alpha F'
   -F''|\nabla_b\alpha|^2-q_\Sigma F\bigr)\psi\,d\sigma_H\\
 &=\int_\Sigma\bigl((\alpha F'-F)q_\Sigma
   -F''|\nabla_b\alpha|^2\bigr)\psi\,d\sigma_H.
 \end{aligned}
\]
Thus the weak chain rule and $\nabla_bu=\cos^2 u\,\nabla_b\alpha$ give
\[
 \begin{aligned}
 L(F(\alpha))
 &=(\alpha F'-F)q_\Sigma-F''|\nabla_b\alpha|^2\\
 &=-\cos^2 u\,q_\Sigma-2\cos^4 u\,|\nabla_b\alpha|^2
  =-\cos^2 u\,q_\Sigma-2|\nabla_bu|^2,
 \end{aligned}
\]
which proves \eqref{rig:eq:LF-direct} using only first derivatives
of $\alpha$.
\end{proof}

\subsection{Ambient realization across characteristic points}
\label{subsec:admissibility}
Although $F$ diverges as $\cos u\to0$, its physical speed $\cos u\,F=\cos u+u\sin u$
stays bounded. A vertical representative near the characteristic
set realizes it by a $C^1$ ambient field. This regularity suffices:
for every $h\in\RR$, the affine second variation extends uniquely
to $C^1$ surface fields by \cite{JacobiSpectrum}*{Proposition~2.6}, with
\[
 |\mathcal B_h(U,X)|\le C_{\Sigma,h}\|U\|_{C^1}\|X\|_{C^1}.
\]

\begin{lemma}[Ambient realization of \texorpdfstring{$F$}{F}]
\label{rig:lem:characteristic-admissibility}
Let $\chi:[0,1]\to[0,1]$ be smooth, equal to one on $[0,1/2]$
and zero on $[2/3,1]$. On every compact oriented $C^2$ hypersurface,
the surface field
\[
 X_F=
 \begin{cases}
 \displaystyle
 \chi(\cos u)\frac{F(\alpha)}{\alpha}T
 -(1-\chi(\cos u))F(\alpha)e_{2n},&\text{on }\Sigma^*,\\[4pt]
 \displaystyle
 \frac{\pi}{2}\operatorname{sgn}\langle N,T\rangle\,T,
 &\text{on }\Sigma\setminus\Sigma^*
 \end{cases}
\]
extends to a global $C^1$ ambient field, vertical near the characteristic
set, with $\varphi_{X_F}=F(\alpha)$ on $\Sigma^*$.
The quotient term is set to zero where $\chi(\cos u)=0$;
where it is used, $\cos u<2/3$ and hence $\alpha\ne0$.
\end{lemma}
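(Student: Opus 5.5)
The plan is to express $X_F$ in terms of the $C^1$ quantities $N_b$ and $\langle N,T\rangle$, as in Lemma~\ref{lem:centre-extension}, and check continuity of the field and its derivative at $N_b=0$. First, the normalized speed. On $\Sigma^*$ we have $N/|N_b|=-e_{2n}+\alpha T$, so $\langle T,N\rangle/|N_b|=\alpha$ and $\langle -e_{2n},N\rangle/|N_b|=1$. Hence
\[
 \varphi_{X_F}=\chi\,\frac{F}{\alpha}\,\alpha+(1-\chi)F=F(\alpha),
\]
wherever each term is used. Where $\chi(\cos u)\neq0$ we have $\cos u<2/3$, so $|\sin u|>0$ and $\alpha\neq0$; the quotient is therefore harmless there.

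Next, regularity on $\Sigma^*$ and at the characteristic set. Write $s=\langle N,T\rangle$ and $c=|N_b|=\sqrt{1-s^2}$. Then
\[
 \frac{F(\alpha)}{\alpha}=\frac{\cos u+u\sin u}{\sin u}=\frac{c}{s}+\arcsin s,
\]
using $u=\arcsin s$ with $|u|<\pi/2$. Also
\[
 -F(\alpha)e_{2n}=\frac{F}{c}\,N_b=\frac{c+s\arcsin s}{c^2}\,N_b.
\]
The horizontal term carries the factor $1-\chi(c)$, which is supported where $c\ge1/2$. There $N_b\mapsto N_b/|N_b|$ and $F$ are smooth functions of $N$, so this term is $C^1$ as a composition with the $C^1$ normal. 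The vertical term is $\chi(c)\bigl(c/s+\arcsin s\bigr)T$. Near the characteristic set $s\to\pm1$, so $c/s$ and $\arcsin s$ are smooth in $s$ away from $s=0$. The only issue is the dependence on $c=\sqrt{1-s^2}$, and both the map $N\mapsto c=|N_b|$ and the product $c\,\chi(c)$ lose smoothness at $c=0$.

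This is the main obstacle. On a neighborhood of the characteristic set we have $\chi(c)=1$, and the coefficient is $g(s)=\sqrt{1-s^2}/s+\arcsin s$. Its limit at $s=\pm1$ is $\pm\pi/2$, which matches the prescribed value $\frac{\pi}{2}\operatorname{sgn}\langle N,T\rangle$. However, $g'(s)=-1/(s^2\sqrt{1-s^2})+1/\sqrt{1-s^2}=-\sqrt{1-s^2}/s^2$, and this tends to $0$ as $s\to\pm1$. So $g$ extends to a $C^1$ function of $s$ near $\pm1$, exactly as $a_0$ did in Lemma~\ref{lem:centre-extension}. Composing with the $C^1$ function $\langle N,T\rangle$ gives a $C^1$ coefficient, equal to $\pm\pi/2$ at characteristic points. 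In the transition region $1/2\le c\le 2/3$ we have $c$ bounded away from zero, so $c=\sqrt{1-s^2}$, $\chi(c)$ and all quotients are smooth functions of $N$ there. The resulting surface field is $C^1$ on all of $\Sigma$ and vertical near $\Sigma\setminus\Sigma^*$.

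Finally, we extend to an ambient field. The outward normal $N$ of a $C^2$ hypersurface is $C^1$ and extends $C^1$ to a tubular neighborhood, for example by composing with the nearest-point projection, which is $C^1$ for a $C^2$ compact hypersurface. Applying the same formulas to this extended $N$ gives a $C^1$ field near $\Sigma$. To keep the field vertical on a neighborhood of the characteristic set, we define the coefficients via $s$ and $g(s)$ as above, since they depend only on the extended $s$. A smooth cutoff then makes it a global compactly supported $C^1$ ambient field. Its restriction to $\Sigma$ is $X_F$, which completes the plan.
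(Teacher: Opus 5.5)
Your proof is correct, but it reaches the key cancellation through a different parametrization than the paper. You write the vertical coefficient as a one-variable function $\gamma(s)=\sqrt{1-s^2}/s+\arcsin s$ of $s=\langle N,T\rangle$ (your $g$). You show $\gamma'(s)=-\sqrt{1-s^2}/s^2\to0$ at $s=\pm1$ and reuse the constant-continuation device of Lemma~\ref{lem:centre-extension}, so the whole surface field becomes a $C^1$ function of the unit normal.

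The paper instead works in a vertical graph chart $t=f(z)$ with $G=\nabla f+Jz$. Since $s=\sigma/\sqrt{1+|G|^2}$ there, your $\gamma(s)$ is exactly its $a_F=\sigma(\pi/2+|G|-\arctan|G|)$. The paper then proves $a_F-\sigma\pi/2=O(|G|^3)$ and $D_za_F=O(|G|^2)$ from the cancellation $1-(1+r^2)^{-1}$.

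One small slip: you say $\arcsin s$ is smooth away from $s=0$, but $\arcsin s$ and $\sqrt{1-s^2}/s$ are each not $C^1$ at $s=\pm1$. It is the cancellation in your derivative computation that makes their sum $C^1$.

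Your route is chart-free and makes the ambient extension immediate. Any $C^1$ unit extension of $N$, for instance through the $C^1$ nearest-point projection of the embedded $C^2$ boundary, gives a $C^1$ field that is vertical on an ambient neighbourhood of the characteristic set. The paper instead extends chart by chart and glues with a partition of unity.

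What the graph computation buys is the explicit bound $\nabla_ba_F=O(|G|^2)$. This is not part of the lemma, but it is the hypothesis later fed into Lemma~\ref{rig:lem:characteristic-pairing} through Proposition~\ref{rig:lem:hessian-localization}. Your formula gives it in one line: with $c=|N_b|$, the relation $s\,ds=-c\,dc$ gives $d(\gamma\circ s)=c^2\,dc/s^3=O(c^2)$, because $dc$ is bounded and $c\le|G|$.
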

\begin{proof}
Near a characteristic point, write the hypersurface as a vertical graph
$t=f(z)$ and put
\[
 \rho(z,t)=t-f(z),\qquad G=\nabla f+Jz,\qquad g=|G|.
\]
In the fixed horizontal frame, $Jz=(-y,x)$, so
\[
 X_j\rho=-f_{x_j}+y_j=-G_{x_j},\qquad
 Y_j\rho=-f_{y_j}-x_j=-G_{y_j},\qquad T\rho=1.
\]
Here $G_{x_j}$ and $G_{y_j}$ denote components of $G$.
Since the frame is orthonormal, the chosen unit normal and its horizontal
part are
\[
 N=\frac{\sigma(T-G)}{\sqrt{1+g^2}},\qquad
 N_b=-\frac{\sigma G}{\sqrt{1+g^2}},\qquad
 |N_b|=\frac{g}{\sqrt{1+g^2}},
\]
where $\sigma=\operatorname{sgn}\langle N,T\rangle\in\{\pm1\}$ is
constant after shrinking the graph patch. Hence, on $\{g>0\}$,
\[
 \alpha=\frac{\langle N,T\rangle}{|N_b|}
 =\frac{\sigma/\sqrt{1+g^2}}{g/\sqrt{1+g^2}}
 =\frac{\sigma}{g},\qquad
 \varphi_{aT}=\frac{\langle aT,N\rangle}{|N_b|}
 =a\alpha=\frac{\sigma a}{g}.
\]
Thus $\varphi_{aT}=F(\alpha)$ precisely when
$a=\sigma gF(\sigma/g)$. Using $F(\alpha)=1+\alpha\arctan\alpha$,
the oddness of $\arctan$, and
$\arctan(1/g)=\pi/2-\arctan g$ for $g>0$, take
\begin{equation}\label{rig:eq:canonical-heights}
 \begin{aligned}
 a_F&=\sigma gF\!\left(\frac{\sigma}{g}\right)
 =\sigma\left(g+\sigma\arctan\frac{\sigma}{g}\right)\\
 &=\sigma\left(g+\arctan\frac1g\right)
 =\sigma\left(\frac{\pi}{2}+g-\arctan g\right).
 \end{aligned}
\end{equation}
Set $a_F=\sigma\pi/2$ where $G=0$. The cancellation and the chain rule give
\[
 \begin{aligned}
 a_F-\frac{\sigma\pi}{2}
 &=\sigma\int_0^g\left(1-\frac1{1+r^2}\right)\,dr
 =\sigma\int_0^g\frac{r^2}{1+r^2}\,dr=O(g^3),\\
 D_za_F
 &=\sigma\left(1-\frac1{1+g^2}\right)D_zg
 =\frac{\sigma g\,G^{\mathsf T}D_zG}{1+g^2}=O(g^2)
 \qquad(g>0).
 \end{aligned}
\]
Here $D_zg=G^{\mathsf T}D_zG/g$ and $D_zG$ is locally bounded since
$f\in C^2$. At every $z_*$ with $G(z_*)=0$, local Lipschitz continuity
of $G$ gives $g(z)=O(|z-z_*|)$, hence
$a_F(z)-a_F(z_*)=O(|z-z_*|^3)$. Thus $D_za_F(z_*)=0$, and the
$O(g^2)$ bound makes the derivative continuous there.
Consequently $a_F(z)T$ is a $C^1$ ambient field realizing $F$.

On overlapping characteristic charts the vertical fields agree on
$\Sigma$, since $a_F=F/\alpha$. Both $a_FT$ and $-F(\alpha)e_{2n}$
have normalized normal speed $F$ on their regular domains, so
\[
 \varphi_{X_F}
 =\chi(\cos u)\frac{F(\alpha)}{\alpha}\alpha
  +(1-\chi(\cos u))F(\alpha)=F(\alpha).
\]
Since $\chi(\cos u)=1$ near the characteristic set, the displayed
surface field equals $a_FT$ there and is $C^1$ across that set.
Extend it locally in $C^2$ hypersurface charts, using vertical
extensions near characteristic points. A finite ambient partition
of unity, with noncharacteristic members supported away from the
characteristic set, glues these extensions to a global $C^1$ field
with the stated surface restriction.
\end{proof}

Fix such an $X_F$. Under the standing CMC assumptions,
any two choices agree on $\Sigma$ near the
characteristic set and differ by a tangential field supported in
$\Sigma^*$. Locality and the $C^1$ regular pairing
\cite{JacobiSpectrum}*{Proposition~2.9}
make every Hessian pairing with this difference vanish.
Their volume fluxes also agree.
Thus the volume flux and Hessian pairings are independent of the
chosen realization in this class.

The same graph description shows how the factors $\cos u$ in the centre
differential control the growth of $\alpha$ and $\Shape$ near
the characteristic set:

\Needspace{4\baselineskip}
\begin{lemma}[Characteristic graph bounds]\label{lem:characteristic-bounds}
In a fixed compact characteristic graph, put $G=\nabla f+Jz$,
$g=|G|$ and $\sigma=\operatorname{sgn}\langle N,T\rangle$.
On its regular part $\alpha=\sigma/g$, $\cos u=g/\sqrt{1+g^2}$, and
\begin{equation}\label{eq:characteristic-bounds}
 \nabla_b u=-\frac{\sigma}{1+g^2}\nabla_b g,\qquad
 |\nabla_b u|+|\cos u\,\Shape|+|\cos u\,\ShapeError |+|\cos^2 u\,q_\Sigma|=O(1).
\end{equation}
In particular, $\AngleError$, $\cos u\,\ShapeError$ and $D_Hz_0$ are bounded,
so the centre and angle energies are integrable.
At $H_\Sigma=2n$, the Hessian density and the divergence fields satisfy
\begin{equation}\label{eq:characteristic-multiplier-bounds}
 \begin{gathered}
 |LF|+|\cos u\,F|+|F'(\alpha)|=O(1),\qquad
 |F\,LF|\,d\sigma_H\le C\,dz,\\
 |u\nabla_b u|+|ue_n|=O(1),\qquad
 |F'(\alpha)\ShapeError e_n|=O(g^{-1}).
 \end{gathered}
\end{equation}
The first line controls the Hessian pairing; the second gives the
field growth needed to remove divergences across the characteristic set.
\end{lemma}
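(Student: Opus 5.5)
The plan is to work entirely in the vertical graph chart of Lemma~\ref{rig:lem:characteristic-admissibility}. There $\Sigma=\{t=f(z)\}$ with $f\in C^2$ on a compact set, so $G=\nabla f+Jz$ is $C^1$ with $|D_zG|\le C$. That proof already gives $N_b=-\sigma G/\sqrt{1+g^2}$ and $\alpha=\sigma/g$. Hence $\cos u=|N_b|=g/\sqrt{1+g^2}$, and on $\{g>0\}$
\[
 e_{2n}=-\frac{N_b}{|N_b|}=\frac{\sigma G}{g}.
\]
Each horizontal frame field $X_j,Y_j$ acts on a function of $z$ alone as $\partial_{x_j},\partial_{y_j}$. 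So horizontal tangential derivatives of $g$, $G$ and $G/g$ are bounded by their Euclidean $z$-derivatives. From $u=\arctan(\sigma/g)$ the chain rule gives $\nabla_bu=-\sigma(1+g^2)^{-1}\nabla_bg$. Since $D_zg=G^{\mathsf T}D_zG/g$ has norm at most $|D_zG|$, this yields $|\nabla_bu|=O(1)$.

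For the shape operator I would use $\Shape X=-\nabla_Xe_{2n}+\alpha J'X$ together with the parallel frame. Then $\nabla_Xe_{2n}$ is simply the frame derivative $\sigma D_z(G/g)[X]$. The elementary bound $|D(G/g)|\le2|D_zG|/g$ then gives
\[
 |\cos u\,\Shape|\le\frac{g}{\sqrt{1+g^2}}\Bigl(\frac{2C}{g}+\frac1g\Bigr)=O(1).
\]
Since $P$ is bounded, $\cos u\,\ShapeError=\cos u\,\Shape-\cos u\,P$ is bounded as well. For the potential \eqref{eq:potential-symmetric-shape}, the three terms of $\cos^2u\,q_\Sigma$ are handled separately:
\begin{itemize}
\item $\cos^2u\,|\Shape|^2=O(1)$ by the previous bound;
\item $\cos^2u\,\alpha^2=\sin^2u\le1$;
\item $\cos^2u\,e_n\alpha=-\sigma(1+g^2)^{-1}e_ng=O(1)$.
\end{itemize}
This proves \eqref{eq:characteristic-bounds}. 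Boundedness of $\AngleError=\nabla_bu+e_n$ follows at once. Boundedness of $D_Hz_0$ then follows from \eqref{rig:eq:centre-defect-differential}, whose terms are $\cos u\,\ShapeError$ and $\AngleError$ composed with isometries. Integrability of the energies follows because $\PH(\Sigma)<\infty$.

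For \eqref{eq:characteristic-multiplier-bounds}, the boundedness of $LF$ follows from \eqref{rig:eq:LF-direct} and the previous bounds. Next, $\cos u\,F=\cos u+u\sin u$ and $F'=2t_{\mathrm P}(u)$ are bounded because $|u|<\pi/2$. For the same reason $|u\nabla_bu|$ and $|ue_n|$ are bounded. For the density bound I need the graph perimeter density: the horizontal cofactor of the chart is $N_b$ times the area density. Since $N_b\sqrt{1+g^2}=-\sigma G$, this gives $d\sigma_H=g\,dz$. Consequently
\[
 |F\,LF|\,d\sigma_H\le C\,|gF|\,dz,\qquad gF=g+\arctan(1/g)\le C.
\]
Finally, $|F'(\alpha)\ShapeError e_n|\le C|\ShapeError|\le C/\cos u\le C'/g$, using $\cos u\ge g/\sqrt{1+\max g^2}$ on the compact chart.

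I expect the main obstacle to be bookkeeping rather than estimates. One point is justifying that Tanaka--Webster covariant derivatives of $e_{2n}$ reduce to coordinate derivatives of $\sigma G/g$. The other is confirming the identity $d\sigma_H=g\,dz$ in the graph chart. Both follow from the frame being parallel and from the cofactor description recalled in Section~\ref{subsec:normalization}.
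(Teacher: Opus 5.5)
Your proposal is correct and follows essentially the same route as the paper. Both use the graph-chart formula $e_{2n}=\sigma G/g$ with the parallel frame and the bound $|D_z(G/g)|\le 2|D_zG|/g$, the chain rule for $u=\arctan(\sigma/g)$, the term-by-term bound on $\cos^2u\,q_\Sigma$, and the estimates $gF=g+\arctan(1/g)$ and $|\ShapeError|\lesssim g^{-1}$. The only slip is cosmetic: in the Hilbert--Schmidt norm the paper uses, $|\alpha J'|=\sqrt{2n-2}\,g^{-1}$ rather than $g^{-1}$, which affects none of the $O(1)$ bounds.
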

\begin{proof}
Choose $M\ge1$ bounding both $g$ and $|D_zG|$ on the fixed compact
chart; below, $C$ depends only on $M$ and $n$.
On $\{g>0\}$, writing $G$ as a column vector gives
\[
 D_zg=\frac{G^{\mathsf T}D_zG}{g},\qquad
 D_z(G/g)=\frac{D_zG}{g}
 -\frac{G(G^{\mathsf T}D_zG)}{g^3}.
\]
Since $|G|=g$, these formulas give, by Cauchy--Schwarz,
\[
 |\nabla_b g|\le|D_zg|\le M,\qquad
 |D_z(G/g)|\le\frac{M}{g}+\frac{g^2M}{g^3}
 =\frac{2M}{g}.
\]
Here $e_{2n}=\sigma G/g$, and the fixed horizontal frame is parallel.
The definition $\Shape X=-\nabla_Xe_{2n}+\alpha J'X$, with
$|\alpha|=g^{-1}$ and $|J'|=\sqrt{2n-2}$, therefore gives
\[
 g|\Shape|\le2M+\sqrt{2n-2},\qquad
 g|\ShapeError|\le g|\Shape|+g|P|
 \le2M+\sqrt{2n-2}+M\sqrt{2n+2}.
\]
Since $\frac{\cos u}{g}=(1+g^2)^{-1/2}\le1$, both $|\cos u\,\Shape|$ and
$|\cos u\,\ShapeError|$ are bounded by $C$.

The chain rule and $|e_ng|\le|\nabla_b g|\le M$ give
\[
 \begin{gathered}
 \nabla_bu=\cos^2 u\,\nabla_b\alpha
 =-\frac{\sigma}{1+g^2}\nabla_b g,\qquad
 |\nabla_bu|\le M,\\
 |e_n\alpha|=g^{-2}|e_ng|\le\frac{M}{g^2},\qquad
 \cos^2 u\,|e_n\alpha|\le\frac{M}{1+g^2},\qquad
 \cos^2 u\,\alpha^2=\frac1{1+g^2}.
 \end{gathered}
\]
Thus the potential \eqref{eq:potential-symmetric-shape} and the
centre differential \eqref{rig:eq:centre-defect-differential} satisfy
\[
 \begin{gathered}
 |\cos^2 u\,q_\Sigma|\le|\cos u\,\Shape|^2+4\cos^2 u\,|e_n\alpha|
       +2(n+1)\cos^2 u\,\alpha^2\le C,\\
 |\AngleError|\le|\nabla_bu|+1\le M+1,\qquad
 |D_Hz_0|\le|\cos u\,\ShapeError|+|\AngleError|\le C.
 \end{gathered}
\]
This proves \eqref{eq:characteristic-bounds}; the centre and angle
energies are integrable since $d\sigma_H=g\,dz$ and $g\le M$.

At $H_\Sigma=2n$, \eqref{rig:eq:LF-direct} gives
$|LF|\le|\cos^2 u\,q_\Sigma|+2|\nabla_bu|^2\le C$.
Using $|u|\le\pi/2$, $|\sin u|\le1$ and
$|\alpha|\cos^2 u=g/(1+g^2)\le1/2$, we also have
\[
 |\cos u\,F|=|\cos u+u\sin u|\le1+\frac\pi2,\qquad
 |F'(\alpha)|=|u+\alpha \cos^2 u|\le\frac\pi2+\frac12.
\]
Since $F=1+u\alpha$ and $|\alpha|=g^{-1}$,
\[
 g|F|\le g+\frac\pi2\le M+\frac\pi2,\qquad
 |F\,LF|\,d\sigma_H=g|F||LF|\,dz\le C\,dz.
\]
Finally, the divergence fields satisfy
\[
 |u\nabla_bu|+|ue_n|\le\frac\pi2(M+1),\qquad
 |F'(\alpha)\ShapeError e_n|
 \le\frac{\pi+1}{2}|\ShapeError|\le\frac{C}{g},
\]
which proves \eqref{eq:characteristic-multiplier-bounds}.
\end{proof}

\subsection{The volume-constrained test field}\label{subsec:volume-correction}
Choose the multiple of $D$ so that $W$ satisfies the linearized volume constraint:

\begin{equation}\label{eq:constrained-field-construction}
 \kappa=\frac{\delta V[X_F]}{\delta V[D]},\qquad
 W=X_F-\kappa D,\qquad \delta V[W]=0.
\end{equation}
Equation~\eqref{eq:volume-hessian-orthogonality} extends by $C^1$ continuity
to $W$, so $\delta V[W]=0$ gives $\mathcal B(D,W)=0$.
To apply stability to $W$, let $X$ be any $C^1$ field with $\delta V[X]=0$
and choose smooth $X_j\to X$ in $C^1$ near $\Sigma$.
If $\Sigma$ is stable, Lemma~\ref{lem:stationarity-cmc} gives
\begin{equation}\label{rig:eq:C1-stability}
 W_j=X_j-\frac{\delta V[X_j]}{\delta V[D]}D
 \longrightarrow X,
 \qquad \mathcal B(X,X)=\lim_j\mathcal B(W_j,W_j)\ge0.
\end{equation}
Since $X_j\to X$ in $C^1$ and $\delta V[X_j]\to\delta V[X]=0$
by $C^0$ continuity of volume flux, the definition of $W_j$ gives
$W_j\to X$ in $C^1$;
the $C^1$ continuity of the Hessian in
\cite{JacobiSpectrum}*{Proposition~2.6} gives the limit.
Taking $X=W$ yields $\mathcal B(W,W)\ge0$.

Dilation orthogonality also gives
\begin{equation}\label{eq:canonical-completed-square}
 \mathcal B(X_F,X_F)
 =\mathcal B(W,W)-2n\delta V[D]\,\kappa^2.
\end{equation}
Changing the dilation centre also leaves the Hessian value of the
resulting volume-constrained field unchanged.
For $a=(a_z,a_t)$, let $D_a$ be the infinitesimal generator of
$L_a\circ\delta_{e^\tau}\circ L_a^{-1}$, where $L_a$ is left translation.
The group law gives
\[
 D_a-D=-a_z\cdot\partial_z+
 \bigl(\langle Ja_z,z\rangle-2a_t\bigr)\partial_t.
\]
This is the generator of left translation by $(-\tau a_z,-2\tau a_t)$.
Translations preserve perimeter and volume, so $\delta V[D_a-D]=0$;
stationarity gives $\mathcal B(D_a-D,X)=0$ for $C^2$ fields, and
the continuity in \cite{JacobiSpectrum}*{Proposition~2.6} extends this to $C^1$ fields.
Consequently $D_a$ has the same flux as $D$, $\kappa$ is unchanged, and
$\mathcal B(X_F-\kappa D_a,X_F-\kappa D_a)=\mathcal B(W,W)$.

\Needspace{16\baselineskip}
\section{The centre identity and CMC rigidity}\label{sec:positive-square}
\label{rig:sec:constrained-balance}
The volume correction subtracts a scalar square from the angle energy,
but Cauchy--Schwarz leaves a positive coefficient on that energy. Hence
$\mathcal B(W,W)\ge0$ forces both defects to vanish.

\PansuResultBookmark{2}{rig:prop:quantitative-instability}{Theorem}
\begin{theorem}[Constrained Hessian identity]
\label{rig:prop:quantitative-instability}
Let $\Omega\subset\Hn$, $n\ge2$, be a bounded nonempty
open set with $C^2$ boundary $\Sigma=\partial\Omega$ and outward
horizontal mean curvature $H_\Sigma=2n$ on $\Sigma^*$.
For the field $X_F$ of Lemma~\ref{rig:lem:characteristic-admissibility},
put $\kappa=\delta V[X_F]/\delta V[D]$ and $W=X_F-\kappa D$, and set
\begin{equation}\label{eq:area-angle-mean}
 A=\PH(\Sigma),\qquad
 \beta=\int_\Sigma\langle \AngleError,e_n\rangle\,d\sigma_H.
\end{equation}
Since $\langle\AngleError,e_n\rangle=e_nu+1$, $\beta$ is the
integrated signed deviation from the Pansu angle rate $-1$.
Then $\delta V[W]=0$, all integrals below are finite, and
\begin{equation}\label{rig:eq:constrained-integral-balance}
 -\mathcal B(W,W)
 =\int_\Sigma(|D_Hz_0|^2+2|\AngleError|^2)\,d\sigma_H
 -\frac{\beta^2}{(2n+1)A}.
\end{equation}
Since $\beta^2\le A\int_\Sigma|\AngleError|^2\,d\sigma_H$ by Cauchy--Schwarz,
\begin{equation}\label{eq:canonical-defect-lower-bound}
 -\mathcal B(W,W)
 \ge\int_\Sigma\left(|D_Hz_0|^2+\frac{4n+1}{2n+1}|\AngleError|^2\right)\,d\sigma_H.
\end{equation}
If $\mathcal B(W,W)\ge0$, then on $\Sigma^*$
\begin{equation}\label{rig:eq:pansu-system}
 D_Hz_0=0,\qquad \AngleError=0,\qquad \ShapeError =0.
\end{equation}
\end{theorem}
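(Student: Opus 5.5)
The plan is to compute $\mathcal B(W,W)$ through the completed square \eqref{eq:canonical-completed-square}, which gives
\[
 \mathcal B(W,W)=\mathcal B(X_F,X_F)+2n\delta V[D]\kappa^2 .
\]
We therefore need two ingredients: $\mathcal B(X_F,X_F)$ and the volume flux $\delta V[X_F]=\int_\Sigma F\,d\sigma_H$.

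\textbf{Step 1: the Hessian of $X_F$.} Realize $X_F$ as in Lemma~\ref{rig:lem:characteristic-admissibility}. Identify
\[
 \mathcal B(X_F,X_F)=\int_\Sigma F\,LF\,d\sigma_H
\]
by differentiating the second-variation density. Near characteristic points $X_F$ is vertical, and the integrand is dominated by $C\,dz$ by Lemma~\ref{lem:characteristic-bounds}. On $\Sigma^*$, insert \eqref{rig:eq:LF-direct}:
\[
 -F\,LF=F\cos^2u\,q_\Sigma+2F|\nabla_bu|^2 .
\]
Then expand $q_\Sigma=|\Shape|^2+4e_n\alpha+2(n+1)\alpha^2$ with $\Shape=P+\ShapeError$. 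This gives
\[
 |\Shape|^2=2n+2+2\langle P,\ShapeError\rangle+|\ShapeError|^2,
 \qquad
 \langle P,\ShapeError\rangle=\operatorname{tr}\ShapeError+\langle\ShapeError e_n,e_n\rangle=\langle\ShapeError e_n,e_n\rangle,
\]
using $\operatorname{tr}\ShapeError=H_\Sigma-2n=0$.

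\textbf{Step 2: match with the centre energy.} Compare the result with \eqref{eq:centre-differential-square},
\[
 |D_Hz_0|^2=\cos^2u|\ShapeError|^2+|\AngleError|^2+2\cos^2u\langle\AngleError,\ShapeError e_n\rangle .
\]
The terms $F\cos^2u|\ShapeError|^2$ and $\cos^2u\,|\ShapeError|^2$ do not match directly, since $F=1+u\alpha\ne1$. The mismatch must be absorbed by integrating by parts. The natural tools are the divergence fields flagged in Lemma~\ref{lem:characteristic-bounds}: $u\nabla_bu$, $ue_n$ and $F'(\alpha)\ShapeError e_n$. We also need the Codazzi-type shape divergence identity from Appendix~\ref{app:codazzi-graph}, which expresses $\operatorname{div}_H(\ShapeError e_n)$, or $\operatorname{div}_H\Shape$, in terms of $\nabla_b\alpha$, $\nabla_bu$ and $H_\Sigma$. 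Terms involving $e_n\alpha$ are rewritten through
\[
 \nabla_bu=\cos^2u\nabla_b\alpha,\qquad \AngleError=\nabla_bu+e_n .
\]
Terms involving $\operatorname{div}_H e_n$ are rewritten through the shape operator.

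Each divergence must integrate to zero across the characteristic set. This uses the bounds $|u\nabla_bu|+|ue_n|=O(1)$ and $|F'\ShapeError e_n|=O(g^{-1})$, together with the local integrability of $g^{-1}$ for $n\ge2$ and the weak extension in Appendix~\ref{app:divergence-extension}. The target is an identity of the form
\[
 -\mathcal B(X_F,X_F)=\int_\Sigma\bigl(|D_Hz_0|^2+2|\AngleError|^2\bigr)\,d\sigma_H-c\Bigl(\int_\Sigma F\,d\sigma_H\Bigr)^2 .
\]

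\textbf{Step 3: the volume flux.} Compute $\int_\Sigma F\,d\sigma_H$ by a divergence identity. I expect a relation of the form
\[
 \int_\Sigma F\,d\sigma_H=\frac{(2n+2)|\Omega|}{?}+\frac{\beta}{?},
\]
obtained by comparing $F=1+u\alpha$ with the true dilation speed $\varphi_D$, whose flux is $(2n+2)|\Omega|$. The dilation formula $\delta V[D]=(2n+2)|\Omega|$ and Lemma~\ref{lem:stationarity-cmc}, which gives $(2n+2)|\Omega|=(2n+1)A/(2n)$, then turn $\kappa^2$ into a combination of $A$ and $\beta$. After substituting into the completed square, the quadratic terms in the flux should cancel except for $-\beta^2/((2n+1)A)$. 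Getting these constants right is the precise check of \eqref{rig:eq:constrained-integral-balance}.

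\textbf{Step 4: conclusions.} The bound \eqref{eq:canonical-defect-lower-bound} follows from
\[
 \beta^2\le A\int_\Sigma|\AngleError|^2\,d\sigma_H,
 \qquad 2-\frac1{2n+1}=\frac{4n+1}{2n+1}.
\]
If $\mathcal B(W,W)\ge0$, the right side of \eqref{eq:canonical-defect-lower-bound} is nonpositive, which forces $D_Hz_0=0$ and $\AngleError=0$. Then \eqref{rig:eq:centre-defect-differential} gives $\cos u\,e^{uJ}\ShapeError X=0$, and since $\cos u>0$ on $\Sigma^*$, $\ShapeError=0$.

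\textbf{Main obstacle.} I expect Step 2 to be hardest. Finding the exact divergence combination that converts $F$-weighted shape and potential terms into $|D_Hz_0|^2+2|\AngleError|^2$ requires the Codazzi identity and careful bookkeeping of $u$-weights. Justifying the vanishing of boundary terms at the characteristic set is a secondary difficulty. My first attempt would be to search for a vector field $Y=a(u)\nabla_bu+b(u)e_n+c(\alpha)\ShapeError e_n$ whose divergence equals the difference between the two densities, with $a,b,c$ fixed by matching the coefficients of $e_n\alpha$ and $\langle\ShapeError e_n,e_n\rangle$.
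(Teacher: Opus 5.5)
Your architecture matches the paper's: completed square, $\mathcal B(X_F,X_F)=\int_\Sigma F\,LF\,d\sigma_H$, an integrated divergence identity, a flux formula, then Cauchy--Schwarz. Step~4 is fine. The decisive computation is missing, however, and the target you set in Step~2 has the wrong form. Integrating a local divergence identity yields only integrals of local densities, so it cannot produce $(\int_\Sigma F\,d\sigma_H)^2$. The correct target is linear in $A$ and $\beta$:
\[
 -\mathcal B(X_F,X_F)=\int_\Sigma\bigl(|D_Hz_0|^2+2|\AngleError|^2\bigr)\,d\sigma_H-2\beta+(2n+1)A .
\]
The $\beta^2$ term comes solely from the correction
\[
 2n\delta V[D]\kappa^2=\frac{\bigl((2n+1)A-\beta\bigr)^2}{(2n+1)A},
\]
whose constant and linear parts cancel $(2n+1)A-2\beta$. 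Indeed, matching the sphere gives $c=-2n/\delta V[D]$ in your target, and then the correction cancels your quadratic term exactly and no $\beta^2$ survives. So Steps~2 and~3 are inconsistent as written. Step~3 also needs no comparison with $\varphi_D$. Since $\operatorname{div}_He_n=2n\alpha$, one has $\operatorname{div}_H(ue_n)=e_nu+2n(F-1)$. Zero total divergence then gives $2n\delta V[X_F]=(2n+1)A-\beta$, and dilation enters only through $2n\delta V[D]=(2n+1)A$.

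The key idea you lack in Step~2 is how to dispose of the weight $F$. Expanding $F\cos^2u\,q_\Sigma$ term by term leads nowhere. Within your ansatz, the fields $b(u)e_n$ and $c(\alpha)\ShapeError e_n$ have divergences at most linear in $\ShapeError$, so they cannot absorb $u\alpha\cos^2u\,|\ShapeError|^2$. Only the $a(u)\nabla_bu$ terms can produce it, and only once you know $\Delta_bu$. Instead, split $F\,LF=LF+u\alpha LF$. Testing the translation Jacobi equation $L\alpha=0$ of Proposition~\ref{lem:central-jacobi} with $\cos^2u\,\psi$ gives $\Delta_bu=\alpha LF$, hence
\[
 u\alpha LF=\operatorname{div}_H(u\nabla_bu)-|\nabla_bu|^2 .
\]
For the unweighted $LF$, the potential expansion leaves $-2\cos^2u\langle\ShapeError e_n,e_n\rangle$. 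Test $\operatorname{div}_H(\ShapeError e_n)=0$ with $F'(\alpha)\psi$ and use $\nabla_bF'(\alpha)=2\cos^2u\,\nabla_bu$. This gives
\[
 \operatorname{div}_H\bigl(F'(\alpha)\ShapeError e_n\bigr)
 =2\cos^2u\langle\AngleError,\ShapeError e_n\rangle-2\cos^2u\langle\ShapeError e_n,e_n\rangle ,
\]
which supplies exactly the mixed term of $|D_Hz_0|^2$. Together these yield the pointwise identity
\[
 F\,LF=-|D_Hz_0|^2-2|\AngleError|^2+2\langle\AngleError,e_n\rangle-(2n+1)
 +\operatorname{div}_H\bigl(F'(\alpha)\ShapeError e_n+u\nabla_bu\bigr),
\]
whose integral is the linear target above.

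Your Step~1 justification is also too weak. The bound $|F\,LF|\,d\sigma_H\le C\,dz$ gives integrability. But the regular Jacobi pairing covers only speeds supported in $\Sigma^*$, and $X_F$ moves characteristic points. On characteristic graphs you need the vertical formula $\mathcal B(a_1T,a_2T)=\int\alpha^2\langle\nabla_ba_1,\nabla_ba_2\rangle\,d\sigma_H$ together with the ground-state identity $L(\alpha v)=-\alpha^{-1}\operatorname{div}_H(\alpha^2\nabla_bv)$. The bound $\nabla_ba_F=O(g^2)$ makes $\alpha^2a_F\nabla_ba_F$ bounded, so Lemma~\ref{app:lem:no-divergence-charge} removes the flux at the characteristic set.
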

\subsection{The local identity from the centre differential}
\label{subsec:local-centre-identity}
Squaring the centre differential produces a mixed shape and angle
term. We combine it with the Jacobi potential and a CMC conservation
law to obtain a divergence identity.
For a horizontal tangent field $Z$, the weighted divergence is
\begin{equation}\label{eq:weighted-horizontal-divergence}
 \operatorname{div}_HZ
 =\operatorname{div}Z+2\alpha\langle Z,e_n\rangle,
\end{equation}
where $\operatorname{div}$ is the trace of the induced horizontal
connection.
The characteristic direction satisfies
\begin{equation}\label{rig:eq:characteristic-direction-derivative}
 (\nabla_Xe_n)^{\top_H}=J'\Shape X
   +\alpha(X-\langle X,e_n\rangle e_n),\qquad
 \operatorname{div}_He_n=2n\alpha.
\end{equation}
Indeed, $e_n=-Je_{2n}$, $\nabla J=0$ and
$\nabla_Xe_{2n}=-(\Shape-\alpha J')X$ give
\[
 \begin{aligned}
 (\nabla_Xe_n)^{\top_H}
 &=(-J\nabla_Xe_{2n})^{\top_H}
   =(J\Shape X-\alpha JJ'X)^{\top_H}\\
 &=J'\Shape X+\alpha(X-\langle X,e_n\rangle e_n).
 \end{aligned}
\]
Here $(JY)^{\top_H}=J'Y$ and
$JJ'X=-X+\langle X,e_n\rangle e_n$ for horizontal tangent vectors.
Since $J'$ is skew-adjoint and $\Shape$ is self-adjoint,
\[
 \operatorname{tr}(J'\Shape)
 =\operatorname{tr}\bigl((J'\Shape)^{\mathsf T}\bigr)
 =-\operatorname{tr}(\Shape J')=-\operatorname{tr}(J'\Shape)=0.
\]
For a horizontal tangent orthonormal frame $(E_a)_{a=1}^{2n-1}$,
\eqref{eq:weighted-horizontal-divergence} therefore gives
\[
 \begin{aligned}
 \operatorname{div}_He_n
 &=\sum_a\langle(\nabla_{E_a}e_n)^{\top_H},E_a\rangle+2\alpha\\
 &=\operatorname{tr}(J'\Shape)
   +\alpha\sum_a(1-\langle E_a,e_n\rangle^2)+2\alpha\\
 &=0+\alpha\bigl((2n-1)-1\bigr)+2\alpha=2n\alpha.
 \end{aligned}
\]

\begin{lemma}[Shape conservation at constant mean curvature]
\label{lem:cmc-shape-conservation}
On a $C^2$ regular patch with $H_\Sigma=2n$, one has weakly,
including across $\alpha=0$,
\begin{equation}\label{rig:eq:codazzi-conservation}
 \operatorname{div}_H(\ShapeError e_n)=0.
\end{equation}
\end{lemma}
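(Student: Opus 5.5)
The plan is to reduce \eqref{rig:eq:codazzi-conservation} to a Codazzi-type identity for the horizontal shape operator. Since $P e_n=2e_n$, we have $\ShapeError e_n=\Shape e_n-2e_n$. By \eqref{rig:eq:characteristic-direction-derivative}, $\operatorname{div}_H(2e_n)=4n\alpha$. Hence the claim is equivalent to the weak identity
\[
 \operatorname{div}_H(\Shape e_n)=4n\alpha\qquad\text{on a regular patch with }H_\Sigma=2n .
\]
Note also that $J'e_n=0$, so $\Shape e_n=-\nabla_{e_n}e_{2n}$. The field is therefore a first covariant derivative of the horizontal unit normal. It is only $C^0$ at $C^2$ regularity, which is why the identity must be weak.

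First I will work formally, as if $\Sigma$ were $C^3$, and compute $\operatorname{div}(\nabla_{e_n}e_{2n})$ by commuting derivatives. Summing over a horizontal tangent orthonormal frame gives $\sum_a\langle\nabla_{E_a}\nabla_{e_n}e_{2n},E_a\rangle$. This equals $e_n(\operatorname{div}e_{2n})$-type terms plus terms quadratic in $\nabla e_{2n}$. The commutator produces torsion terms from $[X_j,Y_j]=-2T$. Writing $T$ via the tangent vector $T+\alpha e_{2n}$ turns these into terms in $\alpha$ and $e_n\alpha$. The Tanaka--Webster connection is flat on the fixed frame, so only torsion contributes.

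Next, the quadratic terms are rewritten using $\Shape=-\nabla e_{2n}+\alpha J'$, the symmetry of $\Shape$, and $\operatorname{tr}(J'\Shape)=0$. The trace term is $e_nH_\Sigma=0$ by CMC. Then I add the drift $2\alpha\langle\Shape e_n,e_n\rangle$ from \eqref{eq:weighted-horizontal-divergence}. The target is that all $\langle\Shape e_n,e_n\rangle$ and $e_n\alpha$ contributions cancel, leaving exactly $4n\alpha$. This is the content of the shape divergence identity derived in Appendix~\ref{app:codazzi-graph}, which I would carry out in a local graph chart. There $e_{2n}$ is given explicitly by first derivatives of the defining function. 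In such a chart the formula may involve division by $\alpha$ or a $T$-graph description that degenerates where $\alpha=0$; the weak argument below is what bridges that locus.

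For $C^2$ regularity, I would express the identity in the integrated form
\[
 \int\langle\Shape e_n,\nabla_b\phi\rangle\,d\sigma_H=-\int4n\alpha\,\phi\,d\sigma_H
 \qquad(\phi\in C^1_c(\text{patch})).
\]
Here $\Shape e_n$ involves only second derivatives of the $C^2$ defining function, and these enter linearly against a derivative of $\phi$. Mollifying the defining function (or $f$ in the graph chart) gives smooth hypersurfaces. The smooth identity holds for these with $H_\varepsilon-2n\to0$ in $C^0$, and the term $e_nH_\varepsilon$ is integrated by parts onto $\phi$. Passing to the limit uses $C^0$ convergence of second derivatives on the compact support.

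The mean-curvature term keeps this uniform across $\alpha=0$ without needing a special chart. This is because the smooth identity contains $e_nH$ in divergence form, i.e.\ $\int(H_\varepsilon-2n)\operatorname{div}_H(\phi e_n)$-type terms, which tend to zero. The main obstacle is the exact bookkeeping of the torsion and drift terms, so that the constant is precisely $4n\alpha$ with no residual $e_n\alpha$ or $\alpha\langle\Shape e_n,e_n\rangle$. I would check this first on the Pansu sphere, where $\ShapeError=0$ must give $0=0$, and on a vertical hyperplane, as sanity tests.
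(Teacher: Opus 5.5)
Your proposal is correct and follows the paper's route. The paper likewise combines the smooth identity $\operatorname{div}_H(\Shape e_n)=e_nH_\Sigma+2\alpha H_\Sigma$ with $2\operatorname{div}_He_n=4n\alpha$. It then extends weakly through smooth non-CMC approximants, and your step of moving the $e_nH$ term onto the test function is equivalent to its corrected field $Z_j=(\mathsf E_{P,j}-\tau_j\Id)e_{n,j}$ with $\tau_j=H_j-2n$. The only difference is that the paper gets the smooth identity by contracting the symmetrized Codazzi identity of Giovannardi--Pinamonti--Verzellesi, whereas you commute derivatives directly.

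Two small points. First, the weak step needs the smooth identity in its general non-CMC form, not with $e_nH=0$ already imposed. Second, your commutation is simpler than you expect. The torsion part of $[E_a,e_n]$ vanishes, since $\langle JE_a,e_n\rangle=-\langle E_a,e_{2n}\rangle=0$, and the connection-form terms cancel by antisymmetry. The $(\nabla_{E_a}e_n)^{\top_H}$ term then gives $2\alpha(H_\Sigma-\langle\Shape e_n,e_n\rangle)$. So no $e_n\alpha$ term appears, and no division by $\alpha$ or vertical graph chart is needed.
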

\begin{proof}
Appendix~\ref{app:codazzi-graph} derives the identity from Codazzi
and proves its weak extension at $C^2$ regularity.
\end{proof}
The model height supplies the multiplier: $F'(\alpha)=2t_{\mathrm P}(u)$
and the lift relation give $\nabla_bF'(\alpha)=2\cos^2 u\,\nabla_bu$.
This matches the $\nabla_bu$ part of the mixed term in
\eqref{eq:centre-differential-square}; its $e_n$ part cancels against
the Jacobi potential. The conservation law turns the remaining
mixed term into a divergence.

\Needspace{8\baselineskip}
\begin{proposition}[Local centre identity]
\label{rig:lem:structure}
Under the standing CMC assumptions, one has weakly on $\Sigma^*$
\begin{equation}\label{eq:local-defect-identity}
 \begin{aligned}
 F\,LF={}&-|D_Hz_0|^2-2|\AngleError|^2+2\langle \AngleError,e_n\rangle-(2n+1)\\
 &+\operatorname{div}_H
       \bigl(F'(\alpha)\ShapeError e_n+u\nabla_b u\bigr).
 \end{aligned}
\end{equation}
\end{proposition}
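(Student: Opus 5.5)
The plan is to compute $F\,LF$ directly from the Jacobi image \eqref{rig:eq:LF-direct}. I would then trade the potential $q_\Sigma$ for the two defects, using three ingredients: the square of the centre differential \eqref{eq:centre-differential-square}, the conservation law of Lemma~\ref{lem:cmc-shape-conservation}, and the weak equation $L\alpha=0$ of Proposition~\ref{lem:central-jacobi}. All identities are read weakly on $\Sigma^*$. There $u$ and $\alpha$ are $C^1$, so only first derivatives of $u$ and products with $C^1$ test functions occur.

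\emph{Step 1: the angle divergence.} Since $\alpha=\tan u$, the weak equation $\Delta_b\alpha=-q_\Sigma\alpha$ and the weak chain rule (test with $\cos^2u\,\psi$) give
\[
 \Delta_bu=-\sin u\cos u\,q_\Sigma-2\alpha|\nabla_bu|^2 .
\]
Hence $\operatorname{div}_H(u\nabla_bu)=|\nabla_bu|^2-u\sin u\cos u\,q_\Sigma-2u\alpha|\nabla_bu|^2$. Combine this with $F=1+u\alpha$, so that $\cos^2u\,F=\cos^2u+u\sin u\cos u$. Multiplying \eqref{rig:eq:LF-direct} by $F$ then yields
\[
 F\,LF=-\cos^2u\,q_\Sigma-3|\nabla_bu|^2+\operatorname{div}_H(u\nabla_bu).
\]

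\emph{Step 2: the potential in terms of $\ShapeError$.} Write $\Shape=\ShapeError+P$. Because $\operatorname{tr}\ShapeError=H_\Sigma-2n=0$, we have $\langle\ShapeError,P\rangle=\langle\ShapeError e_n,e_n\rangle$. Also $|P|^2=2n+2$, $\cos^2u\,e_n\alpha=e_nu$ and $\cos^2u\,\alpha^2=\sin^2u$. Together these give
\[
 \cos^2u\,q_\Sigma=\cos^2u|\ShapeError|^2+2\cos^2u\langle\ShapeError e_n,e_n\rangle+(2n+2)+4e_nu .
\]

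\emph{Step 3: the shape divergence and the centre square.} By Lemma~\ref{lem:cmc-shape-conservation}, and since $\nabla_bF'(\alpha)=2\cos^2u\,\nabla_bu$ with $\nabla_bu=\AngleError-e_n$,
\[
 \operatorname{div}_H\bigl(F'(\alpha)\ShapeError e_n\bigr)
 =2\cos^2u\langle\AngleError,\ShapeError e_n\rangle-2\cos^2u\langle\ShapeError e_n,e_n\rangle .
\]
Substituting \eqref{eq:centre-differential-square} gives
\[
 -\cos^2u|\ShapeError|^2-2\cos^2u\langle\ShapeError e_n,e_n\rangle
 =-|D_Hz_0|^2+|\AngleError|^2+\operatorname{div}_H(F'\ShapeError e_n).
\]
The remaining scalar terms are $-(2n+2)-4e_nu-3|\nabla_bu|^2+|\AngleError|^2$. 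Expand them with $e_nu=\langle\AngleError,e_n\rangle-1$ and $|\nabla_bu|^2=|\AngleError|^2-2\langle\AngleError,e_n\rangle+1$. They collapse to $-2|\AngleError|^2+2\langle\AngleError,e_n\rangle-(2n+1)$, which is \eqref{eq:local-defect-identity}.

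\emph{Main obstacle.} The algebra is routine; the delicate point is the weak meaning. The product rule for $\operatorname{div}_H(F'\ShapeError e_n)$ must be justified, since $\ShapeError e_n$ is only $C^0$. I would handle it by testing: for $\psi\in C_c^1(\Sigma^*)$, apply the weak conservation law to the $C^1$ test $F'(\alpha)\psi$. Likewise, the weak equation for $\alpha$ is tested against $\cos^2u\,u\,\psi$ and $\cos^2u\,\psi$, which are admissible $C^1$ compactly supported functions. Each displayed density is continuous on $\Sigma^*$, so equality as distributions against $C_c^1$ tests is exactly the asserted weak identity.
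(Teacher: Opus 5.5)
Your proposal is correct and follows essentially the paper's own proof. It uses the same three ingredients: the potential expansion via $\operatorname{tr}\ShapeError=0$, the weak product rule for $F'(\alpha)\ShapeError e_n$ (obtained by testing the conservation law with $F'(\alpha)\psi$), and $\Delta_b u=\alpha LF$ (obtained by testing $L\alpha=0$ with $\cos^2u\,\psi$). The only difference is the order: the paper first writes $LF$ in defect form and then passes to $F\,LF=LF+\operatorname{div}_H(u\nabla_bu)-|\nabla_bu|^2$, whereas you first form $F\,LF=-\cos^2u\,q_\Sigma-3|\nabla_bu|^2+\operatorname{div}_H(u\nabla_bu)$ and substitute afterwards.
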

\begin{proof}
Since $PX=X+\langle X,e_n\rangle e_n$ and
$\operatorname{tr}\ShapeError=H_\Sigma-2n=0$, an orthonormal frame gives
\[
 \begin{aligned}
 \langle P,\ShapeError\rangle
 &=\sum_a\langle PE_a,\ShapeError E_a\rangle\\
 &=\sum_a\langle E_a,\ShapeError E_a\rangle
   +\sum_a\langle E_a,e_n\rangle
     \langle e_n,\ShapeError E_a\rangle\\
 &=\operatorname{tr}\ShapeError+\langle\ShapeError e_n,e_n\rangle
   =\langle\ShapeError e_n,e_n\rangle.
 \end{aligned}
\]
Moreover, $\nabla_bu=\cos^2 u\,\nabla_b\alpha$ and
$\AngleError=\nabla_bu+e_n$ imply
\[
 \cos^2 u\,e_n\alpha=e_nu=\langle\AngleError,e_n\rangle-1,
 \qquad \cos^2 u\,\alpha^2=\sin^2 u.
\]
Using $\Shape=P+\ShapeError$, $|P|^2=2n+2$ and $\cos^2 u+\sin^2 u=1$,
we can now expand the potential:
\begin{equation}\label{rig:eq:jacobi-potential}
 \begin{aligned}
 \cos^2 u\,q_\Sigma
 &=\cos^2 u\,\bigl(|P|^2+2\langle P,\ShapeError\rangle+|\ShapeError|^2\bigr)
   +4\cos^2 u\,e_n\alpha+2(n+1)\cos^2 u\,\alpha^2\\
 &=|\cos u\,\ShapeError|^2+2\cos^2 u\,\langle\ShapeError e_n,e_n\rangle
   +2(n+1)\cos^2 u\\
 &\quad+4\bigl(\langle\AngleError,e_n\rangle-1\bigr)+2(n+1)\sin^2 u\\
 &=2(n-1)+|\cos u\,\ShapeError|^2+2\cos^2 u\,\langle\ShapeError e_n,e_n\rangle
   +4\langle\AngleError,e_n\rangle.
 \end{aligned}
\end{equation}
Substitution into \eqref{rig:eq:LF-direct}, using
$|\nabla_bu|^2=|\AngleError|^2-2\langle\AngleError,e_n\rangle+1$,
then gives
\begin{equation}\label{eq:local-centre-codazzi}
 \begin{aligned}
 LF&=-\cos^2 u\,q_\Sigma-2|\nabla_bu|^2\\
 &=-2(n-1)-\cos^2 u\,|\ShapeError|^2
   -2\cos^2 u\,\langle\ShapeError e_n,e_n\rangle-4\langle\AngleError,e_n\rangle\\
 &\quad-2\bigl(|\AngleError|^2-2\langle\AngleError,e_n\rangle+1\bigr)\\
 &=-2n-\cos^2 u\,|\ShapeError|^2-2|\AngleError|^2
   -2\cos^2 u\,\langle\ShapeError e_n,e_n\rangle.
 \end{aligned}
\end{equation}
Since $F'(\alpha)\in C^1(\Sigma^*)$, testing
Lemma~\ref{lem:cmc-shape-conservation} with $F'(\alpha)\psi$,
$\psi\in C_c^1(\Sigma^*)$, gives
\[
 \begin{aligned}
 0&=\int_\Sigma\langle\ShapeError e_n,
     \nabla_b(F'\psi)\rangle\,d\sigma_H\\
 &=\int_\Sigma F'\langle\ShapeError e_n,\nabla_b\psi\rangle\,d\sigma_H
   +2\int_\Sigma \cos^2 u\,\psi
     \langle\nabla_bu,\ShapeError e_n\rangle\,d\sigma_H,
 \end{aligned}
\]
where $\nabla_bF'=F''\nabla_b\alpha=2\cos^2 u\,\nabla_bu$.
Consequently, in the weak sense,
\[
 \begin{aligned}
 \operatorname{div}_H\bigl(F'(\alpha)\ShapeError e_n\bigr)
 &=2\cos^2 u\,\langle\nabla_bu,\ShapeError e_n\rangle\\
 &=2\cos^2 u\,\langle\AngleError,\ShapeError e_n\rangle
   -2\cos^2 u\,\langle\ShapeError e_n,e_n\rangle.
 \end{aligned}
\]
The centre square \eqref{eq:centre-differential-square} now identifies
the first bracket in the following calculation:
\[
 \begin{aligned}
 LF&=-\bigl(\cos^2 u\,|\ShapeError|^2+|\AngleError|^2
        +2\cos^2 u\,\langle\AngleError,\ShapeError e_n\rangle\bigr)
      -|\AngleError|^2-2n\\
 &\quad+\bigl(2\cos^2 u\,\langle\AngleError,\ShapeError e_n\rangle
       -2\cos^2 u\,\langle\ShapeError e_n,e_n\rangle\bigr)\\
 &=-|D_Hz_0|^2-|\AngleError|^2-2n
    +\operatorname{div}_H\bigl(F'(\alpha)\ShapeError e_n\bigr).
 \end{aligned}
\]

To pass from $LF$ to $F\,LF$, use $F=1+u\alpha$.
For $\psi\in C_c^1(\Sigma^*)$, testing $L\alpha=0$ with
$\cos^2 u\,\psi$ and using $\nabla_b(\cos^2 u)=-2\alpha \cos^4 u\,\nabla_b\alpha$ gives
\[
 \begin{aligned}
 0&=\int_\Sigma\bigl(\langle\nabla_b\alpha,
     \nabla_b(\cos^2 u\,\psi)\rangle-q_\Sigma\alpha \cos^2 u\,\psi\bigr)\,d\sigma_H\\
 &=\int_\Sigma\langle\nabla_bu,\nabla_b\psi\rangle\,d\sigma_H
   -\int_\Sigma\bigl(2\alpha|\nabla_bu|^2
     +q_\Sigma\alpha \cos^2 u\bigr)\psi\,d\sigma_H.
 \end{aligned}
\]
Thus the weak Laplacian of $u=\arctan\alpha$ is
\[
\begin{aligned}
 \Delta_b  u
 &=-\alpha \cos^2 u\,q_\Sigma-2\alpha|\nabla_bu|^2\\
 &=\alpha\bigl(-\cos^2 u\,q_\Sigma-2|\nabla_bu|^2\bigr)=\alpha LF.
\end{aligned}
\]
This identity absorbs the extra term $u\alpha LF$ into a divergence:
\[
 \begin{aligned}
 \operatorname{div}_H(u\nabla_bu)
 &=\langle\nabla_bu,\nabla_bu\rangle+u\Delta_bu\\
 &=|\nabla_bu|^2+u\alpha LF.
 \end{aligned}
\]
Since $F=1+u\alpha$, it follows that
\begin{equation}\label{rig:eq:FLF-divergence}
 F\,LF=LF+\operatorname{div}_H(u\nabla_b u)-|\nabla_b u|^2.
\end{equation}
Inserting the centre expression for $LF$ makes the remaining
scalar terms explicit:
\[
 \begin{aligned}
 F\,LF
 &=-|D_Hz_0|^2-|\AngleError|^2-2n
   -\bigl(|\AngleError|^2-2\langle\AngleError,e_n\rangle+1\bigr)\\
 &\quad+\operatorname{div}_H
   \bigl(F'(\alpha)\ShapeError e_n+u\nabla_bu\bigr)\\
 &=-|D_Hz_0|^2-2|\AngleError|^2+2\langle\AngleError,e_n\rangle-(2n+1)\\
 &\quad+\operatorname{div}_H
   \bigl(F'(\alpha)\ShapeError e_n+u\nabla_bu\bigr),
 \end{aligned}
\]
which is \eqref{eq:local-defect-identity}.
\end{proof}

\subsection{Global pairing across the characteristic set}
\label{subsec:global-centre-pairing}
Global integration requires integrable scalar terms and zero flux
across the characteristic set. Lemma~\ref{lem:characteristic-bounds}
gives
\[
 |D_Hz_0|^2+|\AngleError|^2+|\nabla_bu|^2+|\cos^2 u\,q_\Sigma|=O(1),
 \qquad F=O(g^{-1})
\]
in each characteristic chart. Since $d\sigma_H=g\,dz$, these
quantities are integrable. The same lemma makes $F\,LF$ integrable
and bounds $F'(\alpha)\ShapeError e_n+u\nabla_bu$ by $Cg^{-1}$.

To determine $\kappa$, we first express the first variation of volume
$\delta V[X_F]$ in terms of $A$ and $\beta$.
\begin{proposition}
\label{prop:finite-defect-balance}
With $A$ and $\beta$ as in \eqref{eq:area-angle-mean}, one has
\begin{equation}\label{rig:eq:normal-angle-balance}
 2n\delta V[X_F]=(2n+1)A-\beta.
\end{equation}
\end{proposition}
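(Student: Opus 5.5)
The plan is to write $F$, up to a constant, as a divergence plus the angle defect, and then integrate. Recall that $\delta V[X_F]=\int_\Sigma\varphi_{X_F}\,d\sigma_H=\int_\Sigma F(\alpha)\,d\sigma_H$ by Lemma~\ref{rig:lem:characteristic-admissibility}. Here $F=1+u\alpha$ with $u=\arctan\alpha$.

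The key pointwise step uses the divergence of the characteristic direction from \eqref{rig:eq:characteristic-direction-derivative}, namely $\operatorname{div}_He_n=2n\alpha$. By the weak product rule on $\Sigma^*$, with $u\in C^1(\Sigma^*)$,
\[
 \operatorname{div}_H(u\,e_n)=\langle\nabla_bu,e_n\rangle+u\operatorname{div}_He_n
 =e_nu+2n\,u\alpha .
\]
Since $\langle\AngleError,e_n\rangle=e_nu+1$, this gives
\[
 2nF=2n+2nu\alpha=(2n+1)-\langle\AngleError,e_n\rangle+\operatorname{div}_H(u\,e_n)
\]
weakly on $\Sigma^*$. Integrating against $d\sigma_H$ produces $(2n+1)A-\beta$ plus the total flux $\int_\Sigma\operatorname{div}_H(u\,e_n)\,d\sigma_H$. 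So \eqref{rig:eq:normal-angle-balance} is equivalent to the vanishing of that flux.

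Integrability of the scalar terms follows from Lemma~\ref{lem:characteristic-bounds}. In each characteristic chart we have $F=O(g^{-1})$ and $d\sigma_H=g\,dz$, so $F\,d\sigma_H$ is bounded by $C\,dz$. Also $\langle\AngleError,e_n\rangle$ is bounded. Away from the characteristic set everything is continuous on the compact complement.

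The main obstacle is the zero-flux statement across the characteristic set. There the vector field $u\,e_n$ is not defined, although \eqref{eq:characteristic-multiplier-bounds} gives $|u\,e_n|=O(1)$. I would invoke the weak divergence extension of Appendix~\ref{app:divergence-extension}. It should show that a horizontal tangent field $Z$ on $\Sigma^*$ with $|Z|\le Cg^{-1}$ (here even bounded), whose weak divergence $\operatorname{div}_HZ$ is integrable, has $\int_\Sigma\operatorname{div}_HZ\,d\sigma_H=0$ on the closed hypersurface $\Sigma$. For this application, $\operatorname{div}_H(u e_n)=2nF-(2n+1)+\langle\AngleError,e_n\rangle$ is integrable.

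If that extension has to be argued directly, I would proceed by a cutoff. Test the weak identity with $\psi_\varepsilon=\eta(g/\varepsilon)$ in each chart, where $\eta$ vanishes near $0$ and equals $1$ on $[1,\infty)$. The error term is $\int\langle Z,\nabla_b\psi_\varepsilon\rangle\,d\sigma_H$, which is bounded by $C\varepsilon^{-1}\int_{\{g<\varepsilon\}}|Z|\,|\nabla_bg|\,g\,dz\le C|\{g<\varepsilon\}|\to0$, using $|\nabla_bg|\le M$. Dominated convergence then handles the integrable density. Combining the two steps gives $2n\,\delta V[X_F]=(2n+1)A-\beta$.
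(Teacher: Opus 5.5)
Your proposal is correct and matches the paper's proof. The paper also integrates $\operatorname{div}_H(u\,e_n)=e_nu+2n\alpha u$, substitutes $e_nu=\langle\AngleError,e_n\rangle-1$ and $\alpha u=F-1$, and removes the total divergence with Lemma~\ref{app:lem:no-divergence-charge}; your side remark that the boundedness of $u\,e_n$ makes the cutoff flux $O(|\{g<\varepsilon\}|)$ is also valid, so here one needs only that the characteristic set is $dz$-null rather than the $\rho^n$ sublevel bound.
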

\begin{proof}
The bounded field $ue_n$ has integrable divergence
$e_nu+2n\alpha u$. The divergence extension in
Lemma~\ref{app:lem:no-divergence-charge} gives zero total divergence.
Since $e_nu=\langle\AngleError,e_n\rangle-1$ and $\alpha u=F-1$,
this is
$0=\beta-A+2n\bigl(\delta V[X_F]-A\bigr)$,
which proves the formula.
\end{proof}
We next express the Hessian through the integrable density $F\,LF$.

\begin{lemma}[Global pairing for vertical characteristic representatives]
\label{rig:lem:characteristic-pairing}
Let $\Sigma=\partial\Omega$ be compact, oriented and $C^2$, with
$H_\Sigma=2n$ on $\Sigma^*$, where $n\ge2$. Let $X$ be a $C^1$ surface
field with normalized speed $\varphi$ on $\Sigma^*$. Suppose $L\varphi$
has a locally integrable weak representative there and
$\varphi L\varphi\in L^1(\Sigma,d\sigma_H)$. In every characteristic
vertical graph $t=f(z)$, put $G=\nabla f+Jz$ and $g=|G|$. Suppose that
near its characteristic set $X=aT$, where $a$ extends in $C^1$ across
$\{g=0\}$ and, on each compact subchart,
\[
 |a|\le C,\qquad |\nabla_ba|\le Cg^2.
\]
Then the global Hessian satisfies
\begin{equation}\label{rig:eq:vertical-global-pairing}
 \mathcal B(X,X)=\int_\Sigma\varphi L\varphi\,d\sigma_H.
\end{equation}
\end{lemma}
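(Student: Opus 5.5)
The plan is to split $X$ by a cutoff into a part living in $\Sigma^*$ and a vertical part near the characteristic set. The regular part is handled by the local regular pairing. The vertical part is handled by direct differentiation of the perimeter density in a graph chart, where the singular boundary terms are controlled by the hypothesis $|\nabla_b a|\le Cg^2$.

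\textbf{Step 1: cutoff decomposition.} Fix $\epsilon>0$ and a cutoff $\eta_\epsilon$ on $\Sigma$ with $\eta_\epsilon=1$ on $\{g\le\epsilon\}$, supported in $\{g<2\epsilon\}$ inside the finitely many characteristic charts, and $|D_z\eta_\epsilon|\le C/\epsilon$. Write $X=\eta_\epsilon X+(1-\eta_\epsilon)X$ and expand $\mathcal B(X,X)$ bilinearly.

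\textbf{Step 2: the regular part.} The term $\mathcal B((1-\eta_\epsilon)X,(1-\eta_\epsilon)X)$ involves only speeds compactly supported in $\Sigma^*$. By \cite{JacobiSpectrum}*{Proposition~2.9} and locality, it equals $Q_\Sigma((1-\eta_\epsilon)\varphi,(1-\eta_\epsilon)\varphi)$. The mixed term is treated the same way; locality reduces it to a regular pairing with a test function supported in $\Sigma^*$. The weak representative of $L\varphi$ then gives
\[
Q_\Sigma(\varphi,\psi)=\int_\Sigma\psi L\varphi\,d\sigma_H
\]
for compactly supported $\psi$. Expanding the cutoffs leaves the principal term $\int(1-\eta_\epsilon)^2\varphi L\varphi\,d\sigma_H$ together with commutator terms of the form $\int\varphi^2|\nabla_b\eta_\epsilon|^2\,d\sigma_H$ and $\int\eta_\epsilon\varphi\langle\nabla_b\varphi,\nabla_b\eta_\epsilon\rangle\,d\sigma_H$.

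\textbf{Step 3: the vertical part.} Near the characteristic set $X=aT$, so in a graph chart the variation is $t\mapsto f+sa$. Here $\varphi=a\alpha=\sigma a/g$ and $d\sigma_H=g\,dz$. The perimeter density of the graph $f+sa$ is $|G+s\nabla a|$ in the chart, so the second $s$-derivative of $\int\eta_\epsilon|G+s\eta_\epsilon a\,\cdots|$ can be computed explicitly. The resulting density is
\[
\frac{|\nabla(\eta_\epsilon a)|^2-\langle G/g,\nabla(\eta_\epsilon a)\rangle^2}{g},
\]
and the volume term vanishes because $T$ generates translations. Now insert $|\nabla_b a|\le Cg^2$, together with $|a|\le C$ and $|\nabla\eta_\epsilon|\le C/\epsilon$ on $\{\epsilon<g<2\epsilon\}$. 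This bounds the density by $C(g^3+\epsilon^{-2}g^{-1}\mathbf 1_{\{g<2\epsilon\}})$. The cutoff piece is where I expect the main obstacle. Its integral is controlled by the sublevel estimate $|\{g<2\epsilon\}|\le C\epsilon^{n}$ coming from the rank-$n$ property of \cite{JacobiSpectrum}*{Corollary~2.2}. That estimate gives
\[
\epsilon^{-2}\int_{\{g<2\epsilon\}}g^{-1}\,dz\lesssim\epsilon^{n-3}.
\]
This bound is not small for $n=2$, so the naive cutoff fails and needs refinement. I would try first to choose $\eta_\epsilon$ as a function of $g$ itself, for example logarithmic in $g$ between $\epsilon^2$ and $\epsilon$. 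Then $\nabla\eta_\epsilon=\eta'(g)\nabla g$, whose component along $G/g$ cancels in the numerator above. What remains is controlled by $|\nabla_b g|$-type tangential derivatives, paired with $a$ and $\nabla a$. The $g^2$ decay of $\nabla_b a$ should then make all commutators $o(1)$. The same refinement handles the commutator terms of Step 2, since $\varphi^2 g=a^2/g$.

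\textbf{Step 4: limit.} Let $\epsilon\to0$. The $C^1$ continuity of $\mathcal B$ in \cite{JacobiSpectrum}*{Proposition~2.6} is not needed for the decomposition, since it is exact for each $\epsilon$. Dominated convergence, using $\varphi L\varphi\in L^1$, then gives $\int(1-\eta_\epsilon)^2\varphi L\varphi\,d\sigma_H\to\int_\Sigma\varphi L\varphi\,d\sigma_H$. The vertical self-term and all commutator terms tend to zero by Step 3. This yields \eqref{rig:eq:vertical-global-pairing}.
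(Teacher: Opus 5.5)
Your overall architecture is sound: the regular part goes through the local pairing, the vertical part through the graph formula $\int g^{-1}|\Pi_G\nabla_z(\cdot)|^2\,dz$, and the cutoffs shrink in $g$. But Steps~3--4 contain a genuine gap.

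Because you split \emph{both} arguments of $\mathcal B$, you generate the quadratic cutoff terms $\int\alpha^2a^2|\nabla_b\eta_\epsilon|^2\,d\sigma_H$ (from the vertical self-term) and $\int\varphi^2|\nabla_b\eta_\epsilon|^2\,d\sigma_H$ (from the regular and mixed terms). You then try to make each of them $o(1)$, and that cannot be done:
\begin{itemize}
\item These terms contain no $\nabla_ba$, so the hypothesis $|\nabla_ba|\le Cg^2$ says nothing about them. Meanwhile $a$ is merely bounded; for $X_F$ it tends to $\pm\pi/2$ on the characteristic set.
\item Taking $\eta_\epsilon$ to be a function of $g$ removes only the $G/g$ component of $\nabla_zg$. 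The surviving part $\Pi_G\nabla_zg=\nabla_bg$ is of order one; on the Pansu sphere, $|\nabla_bu|=1$ and $\nabla_bu=-\sigma(1+g^2)^{-1}\nabla_bg$.
\item The only information available near the characteristic set is the rank-$n$ sublevel bound, and for $n=2$ it allows positive capacity for the weight $g^{-1}\,dz$. For the graph $t=x_1y_1+x_2y_2$ in $\mathbb H^2$ one has $G_{x_j}=0$, $G_{y_j}=2x_j$, $g=2|(x_1,x_2)|$ and $\nabla_zg\perp G$. Cauchy--Schwarz along rays then shows that every cutoff equal to one on $\{g=0\}$ and vanishing on $\{g\ge\delta\}$ has $\int g^{-1}|\nabla_b\eta|^2\,dz\ge c/\delta$.
\item Your logarithmic cutoff between $\epsilon^2$ and $\epsilon$ gives $\log^{-2}(1/\epsilon)\int_{\{\epsilon^2<g<\epsilon\}}g^{-3}\,dz$. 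When $|\{g<\rho\}|\sim\rho^2$ this is of order $\epsilon^{-2}\log^{-2}(1/\epsilon)$, which is worse than the naive cutoff.
\end{itemize}
So the claim in Step~4 that the vertical self-term and all commutator terms tend to zero is false in general. Your own bound $\epsilon^{n-3}$ is not small for $n=3$ either.

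The missing idea is that these quadratic terms are not boundary contributions at all; they cancel exactly. Since $\varphi=\alpha a$ and $Q_\Sigma(\psi_1\varphi,\psi_2\varphi)=\int\psi_1\psi_2\varphi L\varphi\,d\sigma_H+\int\varphi^2\langle\nabla_b\psi_1,\nabla_b\psi_2\rangle\,d\sigma_H$, the three pieces contribute as follows: the regular self-term gives $+\int\varphi^2|\nabla_b\eta_\epsilon|^2\,d\sigma_H$, the doubled mixed term gives $-2\int\varphi^2|\nabla_b\eta_\epsilon|^2\,d\sigma_H$, and the vertical self-term gives $+\int\alpha^2a^2|\nabla_b\eta_\epsilon|^2\,d\sigma_H$. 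These sum to zero.

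Besides $\int(1-\eta_\epsilon^2)\varphi L\varphi\,d\sigma_H$, what survives is $\int\alpha^2\eta_\epsilon^2|\nabla_ba|^2\,d\sigma_H+2\int\alpha^2\eta_\epsilon a\langle\nabla_b\eta_\epsilon,\nabla_ba\rangle\,d\sigma_H$. The second term is the genuine flux. It is linear in $\nabla_b\eta_\epsilon$ and paired with the bounded field $\alpha^2a\nabla_ba$, hence it is $O(|\{g<2\epsilon\}|)=O(\epsilon^n)$. This is precisely where $|\nabla_ba|\le Cg^2$ is used.

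More simply, split only the first argument, $\mathcal B(X,X)=\mathcal B(\eta_\epsilon X,X)+\mathcal B((1-\eta_\epsilon)X,X)$, so that no quadratic cutoff term ever appears. The paper does this with a fixed partition of unity. It applies the vertical formula $\mathcal B(a_1T,a_2T)=\int\alpha^2\langle\nabla_ba_1,\nabla_ba_2\rangle\,d\sigma_H$. It then uses the ground-state identity $L(\alpha v)=-\alpha^{-1}\operatorname{div}_H(\alpha^2\nabla_bv)$ to convert $\alpha^2\langle\nabla_b(\eta_ia),\nabla_ba\rangle$ into $\eta_i\varphi L\varphi+\operatorname{div}_H(\eta_i\varphi\alpha\nabla_ba)$. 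Finally it removes the divergence of this $O(1)$ field by Lemma~\ref{app:lem:no-divergence-charge}, whose internal cutoff is exactly the shrinking cutoff you want.

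Two smaller points:
\begin{itemize}
\item The volume Hessian of the graph $f+s\eta_\epsilon a$ vanishes because volume is affine in the graph height, not because $T$ generates translations.
\item Contrary to Step~4, both the vertical formula for $C^1$ heights and the meaning of $\mathcal B(X,X)$ for a $C^1$ field rely on the $C^1$ continuity of \cite{JacobiSpectrum}*{Proposition~2.6}.
\end{itemize}
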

\begin{proof}
Choose a finite partition $\sum_i\eta_i=1$ near $\Sigma$ subordinate
to regular charts and characteristic graphs where $X=aT$. Locality
and bilinearity of the $C^1$ Hessian
\cite{JacobiSpectrum}*{Proposition~2.6} allow us to partition its first
argument. On each chart, a fixed cutoff equal to one near
$\operatorname{supp}\eta_i\cap\Sigma$ localizes the second argument.
On regular charts, the local pairing
\cite{JacobiSpectrum}*{Proposition~2.9} and weak integration by parts
give $\mathcal B(\eta_iX,X)=\int\eta_i\varphi L\varphi\,d\sigma_H$.

On a characteristic graph, write
$\Pi_G=I-g^{-2}G\otimes G$ on $\{g>0\}$ and $\alpha=\sigma/g$.
For compactly supported vertical heights $a_1,a_2$, the norm
Hessian formula \cite{JacobiSpectrum}*{Section~2.1} applied to
the graph density $|\nabla f+Jz|$ gives
\begin{equation}\label{rig:eq:vertical-multiplier}
 \mathcal B(a_1T,a_2T)
 =\int\frac{\langle\Pi_G\nabla_za_1,\Pi_G\nabla_za_2\rangle}{g}\,dz
 =\int\alpha^2\langle\nabla_ba_1,\nabla_ba_2\rangle\,d\sigma_H.
\end{equation}
The contribution of a vertical graph patch to enclosed volume is
affine in the graph height, so the volume Hessian vanishes for vertical
height fields. The rank estimate
\cite{JacobiSpectrum}*{Corollary~2.2} makes $g^{-1}$ integrable for
$n\ge2$; continuity of the Hessian extends the formula to
$C^1$ heights.

Central translation gives $L\alpha=0$ weakly on $\Sigma^*$ by
Proposition~\ref{lem:central-jacobi}. Testing it with $v\psi$ yields
the ground-state identity, wherever $\alpha\ne0$,
\begin{equation}\label{eq:vertical-conjugation}
 L(\alpha v)=-\alpha^{-1}\operatorname{div}_H(\alpha^2\nabla_bv).
\end{equation}
Since $\varphi=\alpha a$, applying the product rule to this identity
with $v=a$ gives on the regular part of the graph
\begin{equation}\label{rig:eq:FLF-localization}
 \alpha^2\langle\nabla_b(\eta_i a),\nabla_ba\rangle
 =\eta_i\varphi L\varphi+
   \operatorname{div}_H(\eta_i\varphi\alpha\nabla_ba).
\end{equation}
At $C^1$ regularity, testing the weak equation $L\alpha=0$ on compact
subsets of $\Sigma^*$ establishes \eqref{rig:eq:FLF-localization} in
distributions. Our assumptions give
$\eta_i\varphi\alpha\nabla_ba
 =\eta_i\alpha^2a\nabla_ba=O(1)$ and
$\alpha^2\langle\nabla_b(\eta_i a),\nabla_ba\rangle=O(1)$.
Thus the divergence density in \eqref{rig:eq:FLF-localization} is
integrable. Lemma~\ref{app:lem:no-divergence-charge} extends the
identity across $\{g=0\}$ with zero total divergence. Equation
\eqref{rig:eq:vertical-multiplier} now gives
$\mathcal B(\eta_iX,X)=\int\eta_i\varphi L\varphi\,d\sigma_H$
on the characteristic graph as well. Summing over the partition
proves \eqref{rig:eq:vertical-global-pairing}.
\end{proof}

\begin{proposition}[Hessian identity for $X_F$]
\label{rig:lem:hessian-localization}\label{rig:lem:exact-admissibility}
Under the standing CMC assumptions,
\begin{equation}\label{rig:eq:FLF-hessian}
 \mathcal B(X_F,X_F)=\int_\Sigma F\,LF\,d\sigma_H.
\end{equation}
\end{proposition}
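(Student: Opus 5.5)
We apply Lemma~\ref{rig:lem:characteristic-pairing} with $X=X_F$ and $\varphi=F(\alpha)$, so the proof reduces to checking its hypotheses. The realization in Lemma~\ref{rig:lem:characteristic-admissibility} gives a $C^1$ surface field with $\varphi_{X_F}=F(\alpha)$ on $\Sigma^*$. It is vertical, $X_F=a_FT$, near the characteristic set, and $a_F$ extends in $C^1$ across $\{g=0\}$.

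The analytic hypotheses come from earlier results.
\begin{itemize}
\item \emph{Weak Jacobi image.} By Proposition~\ref{lem:central-jacobi}, $LF$ has the continuous weak representative \eqref{rig:eq:LF-direct} on $\Sigma^*$, namely $-\cos^2u\,q_\Sigma-2|\nabla_bu|^2$. This uses only the weak equation $L\alpha=0$ and first derivatives of $\alpha$.
\item \emph{Integrability.} By Lemma~\ref{lem:characteristic-bounds}, $|F\,LF|\,d\sigma_H\le C\,dz$ in each characteristic chart. Away from the characteristic set, $F$ and $LF$ are continuous on a compact region. Hence $F\,LF\in L^1(\Sigma,d\sigma_H)$.
\end{itemize}

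The remaining hypotheses concern the vertical height. The bound $|a_F|\le C$ is immediate from \eqref{rig:eq:canonical-heights}, since $|a_F|\le\pi/2+g$ and $g\le M$ on a compact chart.

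For the gradient, the proof of Lemma~\ref{rig:lem:characteristic-admissibility} gives $D_za_F=\sigma g\,G^{\mathsf T}D_zG/(1+g^2)=O(g^2)$ wherever $g>0$. The horizontal tangential gradient $\nabla_ba_F$ on the graph is obtained from $D_za_F$ through the bounded frame $X_j,Y_j$ restricted to $t=f(z)$. Because $a_F$ depends on $z$ only, $\nabla_ba_F$ is bounded by $|D_za_F|$ up to a uniform constant, which yields $|\nabla_ba_F|\le Cg^2$. On $\{g=0\}$ the derivative vanishes by the same lemma, so the bound holds throughout.

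With all hypotheses verified, \eqref{rig:eq:vertical-global-pairing} gives $\mathcal B(X_F,X_F)=\int_\Sigma F\,LF\,d\sigma_H$. It remains to address the dependence on the cutoff $\chi$ and the chosen extension. By the remark after Lemma~\ref{rig:lem:characteristic-admissibility}, different realizations differ by tangential fields supported in $\Sigma^*$, and these do not change the Hessian. The right-hand side depends only on $F$.

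The step I expect to need the most care is the gradient bound $|\nabla_ba_F|\le Cg^2$. The point is that the cancellation $1-(1+g^2)^{-1}=g^2/(1+g^2)$ must be kept when passing from $D_z$ to the horizontal tangential gradient, and the chain rule for $g$ at $C^2$ regularity of $f$ needs justification. Lipschitz continuity of $G$ makes $D_zG$ bounded, so the estimate holds pointwise on $\{g>0\}$, which is where Lemma~\ref{rig:lem:characteristic-pairing} requires it.
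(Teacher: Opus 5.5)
Your proposal is correct and is essentially the paper's proof: apply Lemma~\ref{rig:lem:characteristic-pairing} with $\varphi=F$, read $|a_F|\le C$ and $\nabla_ba_F=O(g^2)$ off the height formula \eqref{rig:eq:canonical-heights}, and take $F\,LF\in L^1(d\sigma_H)$ from Lemma~\ref{lem:characteristic-bounds}. Your additional checks make explicit what the paper leaves implicit: the continuous weak representative \eqref{rig:eq:LF-direct} of $LF$, the bound $|\nabla_ba_F|\le|D_za_F|$ in the graph chart, and independence of the chosen realization.
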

\begin{proof}
Lemma~\ref{rig:lem:characteristic-admissibility} gives
$X_F=a_FT$ near the characteristic set. Its height formula
\eqref{rig:eq:canonical-heights} gives $a_F=O(1)$ and
$\nabla_ba_F=O(g^2)$. Lemma~\ref{lem:characteristic-bounds}
gives $F\,LF\in L^1(d\sigma_H)$. Apply
Lemma~\ref{rig:lem:characteristic-pairing} with $\varphi=F$.
\end{proof}

All scalar terms in \eqref{eq:local-defect-identity} are integrable
by the bounds above, so its divergence density is integrable.
Together with the $O(g^{-1})$ field bound, this verifies both
hypotheses of Lemma~\ref{app:lem:no-divergence-charge}; the identity
extends across the characteristic set with zero total divergence.
Integrating \eqref{eq:local-defect-identity} and using
\eqref{rig:eq:FLF-hessian} therefore gives
\begin{equation}\label{eq:integrated-defect-pairing}
 \mathcal B(X_F,X_F)
 =-\int_\Sigma(|D_Hz_0|^2+2|\AngleError|^2)\,d\sigma_H
   +2\beta-(2n+1)A.
\end{equation}

\subsection{The constrained Hessian}
\label{subsec:centre-volume-correction}
\begin{proof}[Proof of \cref{rig:prop:quantitative-instability}]
By definition,
$\kappa=\delta V[X_F]/\delta V[D]$ and $\delta V[W]=0$.
The completed square
\eqref{eq:canonical-completed-square} gives
\[
 \mathcal B(W,W)=\mathcal B(X_F,X_F)
 +\frac{2n\bigl(\delta V[X_F]\bigr)^2}{\delta V[D]}.
\]
Since $2n\delta V[D]=(2n+1)A$
by Lemma~\ref{lem:stationarity-cmc} at $h=2n$,
equation~\eqref{rig:eq:normal-angle-balance} gives
\[
 \frac{2n\bigl(\delta V[X_F]\bigr)^2}{\delta V[D]}
 =\frac{\bigl((2n+1)A-\beta\bigr)^2}{(2n+1)A}
 =(2n+1)A-2\beta+\frac{\beta^2}{(2n+1)A}.
\]
Substitution into \eqref{eq:integrated-defect-pairing} cancels
both the constant and linear terms and proves
\eqref{rig:eq:constrained-integral-balance}.
The scalar correction involves the perimeter average $\beta/A$
of $e_nu+1$. The Cauchy--Schwarz estimate
in the statement therefore gives \eqref{eq:canonical-defect-lower-bound}.
If $\mathcal B(W,W)\ge0$, continuity on $\Sigma^*$ gives
$D_Hz_0=\AngleError=0$. Then
\eqref{eq:centre-differential-square}, with $\cos u>0$ there, gives
$\ShapeError=0$.
\end{proof}

On the unit Pansu sphere centred at the origin,
$\ShapeError=\AngleError=D_Hz_0=0$ and $\beta=0$.
Equation~\eqref{eq:dilation-support-speed} gives
$\varphi_{X_F}=\varphi_D$, so $\kappa=1$.
Since \eqref{rig:eq:LF-direct} gives $LF=-2n$, the identities reduce to
\[
 \mathcal B(X_F,X_F)=-2n\delta V[X_F]=-(2n+1)A,
 \qquad \mathcal B(W,W)=0.
\]
This checks the Hessian sign and the dilation correction on the sphere.

\subsection{Rigidity from the centre identity}\label{rig:sec:classification}
The centre identity determines the geometry of each component.
The number of components is a separate variational question: the
single field $W$ can vanish in the Hessian on a union of spheres,
but transferring volume between components gives negative directions.
To make this distinction precise, return temporarily to any constant
mean curvature $h>0$. The compact boundary has
finitely many components $\Sigma_1,\ldots,\Sigma_m$.
Choose smooth $D_i$ equal to $D$ near $\Sigma_i$
and zero near all other components, with mutually disjoint supports.
Put $v_i=\delta V[D_i]$. The flow of $D_i$ dilates $\Sigma_i$
and fixes the other components. Stationarity and homogeneity give
\[
 hv_i=\delta\PH[D_i]
 =\left.\frac{d}{d\tau}\right|_0 e^{(2n+1)\tau}\PH(\Sigma_i)
 =(2n+1)\PH(\Sigma_i)>0.
\]
For $i\ne j$, disjoint supports give $\mathcal B_h(D_i,D_j)=0$.
Since $D_i-D$ vanishes near $\Sigma_i$, locality and
\eqref{eq:volume-hessian-orthogonality} give
\[
 \mathcal B_h(D_i,D_i)=\mathcal B_h(D,D_i)=-h\delta V[D_i]=-hv_i.
\]
The $m$ fields span a negative-definite space, whose subspace
$\sum_i a_iv_i=0$ has dimension $m-1$ and zero volume flux. Hence
\begin{equation}\label{eq:component-index}
 \operatorname{ind}_{\mathrm{vol}}(\mathcal B_h)\ge m-1.
\end{equation}
If $m\ge2$, choose a nonzero smooth field in this zero-flux subspace.
Lemma~\ref{lem:stationarity-cmc} realizes it by an exactly
volume-preserving variation with negative second perimeter derivative.
Thus volume-preserving stability makes the boundary connected. The exact
conclusion supplied by $W$ alone, at the normalized scale, is the
following.

\begin{corollary}[Single-field criterion]\label{thm:single-field-classification}
Let $\Omega\subset\Hn$, $n\ge2$, be a bounded nonempty
open set with $C^2$ boundary $\Sigma$ and outward
$H_\Sigma=2n$ on $\Sigma^*$. Then $\mathcal B(W,W)\ge0$ for the
field in \eqref{eq:constrained-field-construction} if and only if
$\Sigma$ is a finite union of unit Pansu spheres enclosing balls
with pairwise disjoint closures. If $\Sigma$ is connected, this
is exactly one left translate of the unit Pansu sphere.
\end{corollary}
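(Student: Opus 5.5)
The forward direction runs from $\mathcal B(W,W)\ge0$ through Theorem~\ref{rig:prop:quantitative-instability}. That theorem gives $D_Hz_0=\AngleError=0$ on $\Sigma^*$. Apply Proposition~\ref{prop:zero-defect-reconstruction} to each connected component $\Sigma_i$. Each component is a compact, connected, oriented embedded $C^2$ hypersurface, and its outward orientation as a piece of $\partial\Omega$ is the orientation used to define $u$ and $e_{2n}$. Since $D_Hz_0$ and $\AngleError$ are computed locally, they vanish on $\Sigma_i^*$. The proposition then identifies $\Sigma_i$ as a left translate $\mathcal S_{a_i}$ of the unit Pansu sphere, and the orientation given by the proposition is the outward one of $\mathcal S_{a_i}$.

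Next, identify $\Omega$. Because $\Omega$ is bounded, it lies on the side of each component into which its outward normal does not point. Each $\Sigma_i$ is oriented outward from its own Pansu ball $B_i$, so near $\Sigma_i$ the set $\Omega$ lies inside $B_i$. I would argue that $\Omega=\bigcup_i B_i$ with the closures $\overline{B_i}$ pairwise disjoint, as follows. The spheres are disjoint embedded hypersurfaces, because $\Sigma$ is embedded and the components are distinct. Hence two balls are either nested or have disjoint closures. Suppose $B_j\subset B_i$. Then the region just outside $\Sigma_j$ lies in $B_i$. Since $\Sigma_j$ is outward-oriented, that region lies outside $\Omega$, while the region just inside $\Sigma_i$ lies in $\Omega$. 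Consider the open set $B_i\setminus\overline{B_j}$. It contains points of $\Omega$ and points of its complement, so its boundary within $B_i\setminus\overline{B_j}$ would have to be another component of $\Sigma$, sitting between the two spheres. Such a component is itself a Pansu sphere $\Sigma_k$, and repeating the argument on the innermost nesting produces a contradiction. Concretely, take a pair $B_j\subset B_i$ with no sphere between them. Then $B_i\setminus\overline{B_j}$ is connected and disjoint from $\Sigma$, but it meets both $\Omega$ and its complement. This is the contradiction, and nesting is ruled out. Each $B_i$ is then connected, disjoint from $\Sigma$, and meets $\Omega$, so $B_i\subset\Omega$. Finally, $\Omega\setminus\bigcup\overline{B_i}$ is open, lies in the complement of the closed balls, and has boundary contained in $\Sigma$. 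Its closure would meet the outside of some $\Sigma_i$ inside $\Omega$, contradicting the outward orientation, so it is empty. The connected case follows immediately.

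For the converse, let $\Sigma=\bigcup_i\mathcal S_{a_i}$ bound disjoint unit balls. On each component $D_Hz_0=\AngleError=0$, so $\beta=0$, and identity \eqref{rig:eq:constrained-integral-balance} gives $\mathcal B(W,W)=0\ge0$. The identity applies because its proof uses only $H_\Sigma=2n$, the $C^2$ boundary, and the global divergence extensions, all of which hold for a disjoint union.

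The step I expect to be the main obstacle is the topological bookkeeping in the forward direction: ruling out nested spheres and verifying that the orientation from Proposition~\ref{prop:zero-defect-reconstruction} matches the outward normal of $\Omega$. The plan is to handle this with the innermost-nesting argument above, using only the fact that every component of $\Sigma$ is outward-oriented relative to its own Pansu ball.
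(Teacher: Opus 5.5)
Your overall structure matches the paper: Theorem~\ref{rig:prop:quantitative-instability}, then Proposition~\ref{prop:zero-defect-reconstruction} applied componentwise, then the converse via $\beta=0$ in \eqref{rig:eq:constrained-integral-balance}. However, your exclusion of nested balls has a step that fails as written. Choosing ``a pair $B_j\subset B_i$ with no sphere between them'' does not make $B_i\setminus\overline{B_j}$ disjoint from $\Sigma$. Suppose $B_i$ contains two balls $B_j,B_k$ with disjoint closures and no other spheres. Then $\Sigma_k\subset B_i\setminus\overline{B_j}$ and $\Sigma_j\subset B_i\setminus\overline{B_k}$, so the pair you need does not exist. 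Your purely topological argument has no way to exclude this configuration in advance. You also assert without justification that the region between two nested spheres is connected; in $\mathbb R^{2n+1}$ this needs Alexander duality or an equivalent separation argument.

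The orientation idea can be repaired. Take $B_i$ minimal among balls that contain another ball, and remove the closures of all balls inside it. The complement of $r+1$ disjoint embedded $2n$-spheres has exactly $r+2$ components, so the remainder is connected. It misses $\Sigma$, yet it meets $\Omega$ just inside $\Sigma_i$ and meets $\Omega^c$ just outside each inner sphere, which is the contradiction you wanted.

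The paper avoids all of this with an observation you did not use. Every component bounds a left translate of the same unit Pansu ball, and left translation preserves Haar measure, so all the $B_i$ have equal volume. Strict nesting $\overline{B_j}\subset B_i$ would make $B_i\setminus\overline{B_j}$ a nonempty open set and give $|B_i|>|B_j|$. Combined with the nested-or-disjoint dichotomy you already state, this gives pairwise disjoint closures in one line, with no orientation bookkeeping. Your last step, identifying $\Omega$ itself as $\bigcup_iB_i$, is not needed for the statement.
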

\PansuResultBookmark{3}{thm:single-field-classification}{Corollary}
\begin{proof}
If $\mathcal B(W,W)\ge0$, Theorem~\ref{rig:prop:quantitative-instability}
gives $D_Hz_0=\AngleError=0$. The compact embedded boundary has finitely
many components; Proposition~\ref{prop:zero-defect-reconstruction}
identifies each with an outward unit Pansu sphere. Distinct enclosed
balls have disjoint closures: disjoint embedded sphere boundaries
enclose either disjoint or nested domains, and strict nesting of
two unit Pansu balls contradicts their equal volume. Conversely,
these spheres have $D_Hz_0=\AngleError=\beta=0$, so
\eqref{rig:eq:constrained-integral-balance} gives
$\mathcal B(W,W)=0$.
\end{proof}

If $\Sigma$ in this corollary is connected and is not a Pansu sphere, $\mathcal B(W,W)<0$.
The smooth approximants in \eqref{rig:eq:C1-stability} satisfy the
linearized volume constraint and retain strict negativity by $C^1$ continuity. The volume-preserving adjustment in
Lemma~\ref{lem:stationarity-cmc} realizes them by smooth
volume-preserving destabilizing variations.

\begin{proof}[Proof of \textup{(ii)} $\Rightarrow$ \textup{(i)} in \cref{rig:thm:stable-cmc}]
Suppose the boundary is stable.
Lemma~\ref{lem:stationarity-cmc} gives $H_\Sigma=h>0$ and the
Hessian inequality under the linearized volume constraint; \eqref{eq:component-index} gives
connectedness.
Dilate by $h/(2n)$ as in
Section~\ref{subsec:normalization} to work at unit scale.
The approximants in \eqref{rig:eq:C1-stability} satisfy the linearized
volume constraint and give $\mathcal B(W,W)\ge0$ from $C^2$ ambient stability.
Corollary~\ref{thm:single-field-classification} identifies the
normalized boundary as a unit Pansu sphere. Undoing the dilation
gives radius $2n/h$.
\end{proof}

\Needspace{9\baselineskip}
\section{Stability of Pansu spheres}\label{sec:model-stability}
\label{subsec:form-domain}
The converse follows by transferring the model index bound to ambient
fields through the global energy representation.

\begin{proof}[Remaining implications in \cref{rig:thm:stable-cmc}]
Lemma~\ref{lem:stationarity-cmc} gives stationarity of Pansu spheres.
On the unit Pansu sphere,
with the outward normal and perimeter measure of
Section~\ref{sec:regular}, the pole estimates in
\cite{JacobiSpectrum}*{Proposition~5.1} place the normalized speed
of every real $C^1$ surface field in the energy domain of
\cite{JacobiSpectrum}*{Definition~4.1}.
On that energy domain, the global
second-variation result~\cite{JacobiSpectrum}*{Corollary~1.2} gives
\[
 \mathcal B(W_1,W_2)
 =Q_{\mathcal S}(\varphi_{W_1},\varphi_{W_2}),\qquad
 \varphi_W=\frac{\langle W,N\rangle}{|N_b|},
\]
for all such fields, including $C^2$ ambient fields that move the poles.
The complete spectrum in \cite{JacobiSpectrum}*{Theorem~1.3}
shows that $Q_{\mathcal S}$ has index one on the real energy space.
Now let $E$ be a finite-dimensional $\mathcal B$-negative subspace of
ambient fields. The speed map $W\mapsto\varphi_W$ is injective on $E$:
if $\varphi_W=0$, then $\mathcal B(W,W)=Q_{\mathcal S}(0,0)=0$, forcing $W=0$.
Its image is therefore a $Q_{\mathcal S}$-negative subspace of the same
dimension, so $\dim E\le1$. Taking the supremum gives
$\operatorname{ind}(\mathcal B)\le1$.
Dilation supplies a negative direction, so the index is one.
Equation~\eqref{eq:ambient-volume-index} gives volume-preserving
stability, proving \textup{(i)} $\Rightarrow$ \textup{(ii)},
as in \cite{JacobiSpectrum}*{Corollary~1.4}.
For any stationary boundary, Lemma~\ref{lem:stationarity-cmc} and
the same index decomposition give \textup{(ii)} $\Leftrightarrow$
\textup{(iii)}. Together with \textup{(ii)} $\Rightarrow$ \textup{(i)}
proved in Section~\ref{rig:sec:classification}, this completes the theorem.
\end{proof}

\begin{proof}[Proof of \cref{rig:cor:isoperimetric}]
Leonardi--Rigot~\cite{LeonardiRigot} give a bounded
open representative $E_0$ of $E$. Choose a Euclidean closed ball
containing $E_0$. The set $\Omega$ is contained in that ball:
otherwise its openness would give a ball of positive measure in
$\Omega\setminus E_0$, contradicting $|\Omega\triangle E_0|=0$.
Thus $\Omega$ is bounded and has compact $C^2$ boundary.
Minimality among sets of the same volume makes every exactly
volume-preserving ambient variation stationary with nonnegative
second perimeter derivative. The boundary is therefore
volume-preserving stable. Theorem~\ref{rig:thm:stable-cmc} identifies
it as a Pansu sphere; its bounded side is the corresponding Pansu ball.
\end{proof}

\begin{applicationremark}[The case $n=1$]
The assumption $n\ge2$ enters both the geometric and analytic parts
of the proof. In Lemma~\ref{lem:horizontal-constancy}, it ensures
that $d\theta$ on the horizontal tangent space has positive rank
$2n-2$, which forces a horizontally constant function to be constant.
Near the characteristic set, the cofactor rank bound
$\operatorname{rank}D_zG\ge n$ gives local integrability of $g^{-1}$,
where $g=|G|$, and a cutoff flux error $O(\rho^{n-1})\to0$;
these justify differentiation
\cite{JacobiSpectrum}*{Corollary~2.2} and divergence extension
by Lemma~\ref{app:lem:no-divergence-charge}.
For $n=1$, the horizontal rank $2n-2$ vanishes, and the cofactor
rank bound alone no longer yields these analytic conclusions.
Moreover, for $n=1$ the dilation speed $1+u\tan u$ satisfies
$|\partial_u(1+u\tan u)|\asymp r^{-2}$ as $r=\cos u\downarrow0$.
Since $d\sigma_H\asymp r^2\,dr\,d\omega$, its energy contains a
divergent integral $\int_0^\varepsilon r^{-2}\,dr$. Thus the global
energy representation used above is unavailable.
\end{applicationremark}

\appendix
\section{A divergence identity for the shape operator}
\label{app:codazzi-graph}
The centre calculation uses Codazzi only through the divergence of
$\ShapeError e_n$. We compute on smooth regular patches, then pass
to $C^2$.
Contract the symmetrized Codazzi identity of
\cite{GPV}*{Corollary~3.15}, which follows from its Proposition~3.14.
Let
$\mathcal D_XY=(\nabla_XY)^{\top_H}$ be the induced horizontal
Tanaka--Webster connection. The source's outward horizontal normal
is $-e_{2n}$, its $J$-rotation is $e_n$, and its $\tilde h$ is our $\Shape$.
For a horizontal tangent orthonormal frame $(E_a)_{a=1}^{2n-1}$,
substitute $X=e_n$, $Y=E_a$ in that corollary and sum.

Since
$\langle JE_a,e_n\rangle=-\langle JE_a,Je_{2n}\rangle
=-\langle E_a,e_{2n}\rangle=0$ and
$\operatorname{tr}(\mathcal D_{e_n}\Shape)=e_nH_\Sigma$, this gives
\[
 \begin{aligned}
 \sum_a\langle(\mathcal D_{E_a}\Shape)e_n,E_a\rangle
 &=\sum_a\bigl\langle\mathcal D_{E_a}(\Shape e_n)
   -\Shape\mathcal D_{E_a}e_n,E_a\bigr\rangle\\
 &=e_nH_\Sigma+\alpha\sum_a\bigl(
   \langle\Shape E_a,E_a\rangle
   -\langle e_n,E_a\rangle\langle\Shape E_a,e_n\rangle\bigr)\\
 &=e_nH_\Sigma+\alpha H_\Sigma-\alpha\langle\Shape e_n,e_n\rangle.
 \end{aligned}
\]
By \eqref{rig:eq:characteristic-direction-derivative}, the
self-adjointness of $\Shape$ and the skew symmetry of $J'$, the
remaining product-rule term is
\[
 \begin{aligned}
 \sum_a\langle\Shape\mathcal D_{E_a}e_n,E_a\rangle
 &=\sum_a\bigl\langle\Shape\bigl[J'\Shape E_a
   +\alpha(E_a-\langle E_a,e_n\rangle e_n)\bigr],E_a\bigr\rangle\\
 &=\operatorname{tr}(\Shape J'\Shape)
   +\alpha(H_\Sigma-\langle\Shape e_n,e_n\rangle)\\
 &=\sum_a\langle J'\Shape E_a,\Shape E_a\rangle
   +\alpha(H_\Sigma-\langle\Shape e_n,e_n\rangle)\\
 &=\alpha(H_\Sigma-\langle\Shape e_n,e_n\rangle).
 \end{aligned}
\]
Adding the weighted correction $2\alpha \langle\Shape e_n,e_n\rangle$ from
\eqref{eq:weighted-horizontal-divergence} proves
\[
 \begin{aligned}
 \operatorname{div}_H(\Shape e_n)
 &=\sum_a\langle\mathcal D_{E_a}(\Shape e_n),E_a\rangle
   +2\alpha\langle\Shape e_n,e_n\rangle\\
 &=e_nH_\Sigma+2\alpha(H_\Sigma-\langle\Shape e_n,e_n\rangle)
   +2\alpha\langle\Shape e_n,e_n\rangle\\
 &=e_nH_\Sigma+2\alpha H_\Sigma.
 \end{aligned}
\]
For a smooth patch with $H_\Sigma=2n$, subtracting
$Pe_n=2e_n$ and using
$\operatorname{div}_He_n=(2n-2)\alpha+2\alpha=2n\alpha$ gives
\[
 \operatorname{div}_H(\ShapeError e_n)
 =0+2\alpha(2n)-2(2n\alpha)=0.
\]

\emph{Weak extension on regular patches.}
Let $\iota:U\subset\RR^{2n}\to\Sigma^*$ be a noncharacteristic
$C^2$ parametrization with $H_\Sigma=2n$. Fix
$\psi\in C_c^1(U)$ and a compact subpatch $K\Subset U$ containing
its support. Approximate $\iota$ in $C^2$ near $K$ by smooth
$\iota_j$, orienting them so that their unit normals converge uniformly
to the unit normal of $\iota$ on $K$. Noncharacteristicity
persists on $K$ for large $j$, but the approximants need not be CMC.
Put $\tau_j=H_j-2n$ and
$\mathsf E_{P,j}=\Shape_j-P_j$. Since $P_je_{n,j}=2e_{n,j}$,
the smooth identities above give
\[
 \begin{aligned}
 \operatorname{div}_{H,j}(\mathsf E_{P,j}e_{n,j})
 &=\operatorname{div}_{H,j}(\Shape_je_{n,j})
   -2\operatorname{div}_{H,j}e_{n,j}\\
 &=e_{n,j}H_j+2\alpha_jH_j-4n\alpha_j\\
 &=e_{n,j}\tau_j+2\alpha_j\tau_j,\\
 \operatorname{div}_{H,j}(\tau_je_{n,j})
 &=e_{n,j}\tau_j+\tau_j\operatorname{div}_{H,j}e_{n,j}\\
 &=e_{n,j}\tau_j+2n\alpha_j\tau_j.
 \end{aligned}
\]
Thus the corrected horizontal tangent field
$Z_j=(\mathsf E_{P,j}-\tau_j\Id)e_{n,j}$ satisfies
\[
 \begin{aligned}
 \operatorname{div}_{H,j}Z_j
 &=\operatorname{div}_{H,j}(\mathsf E_{P,j}e_{n,j})
   -\operatorname{div}_{H,j}(\tau_je_{n,j})\\
 &=(e_{n,j}\tau_j+2\alpha_j\tau_j)
   -(e_{n,j}\tau_j+2n\alpha_j\tau_j)\\
 &=-(2n-2)\alpha_j\tau_j.
 \end{aligned}
\]
The derivatives $e_{n,j}\tau_j=e_{n,j}H_j$ have cancelled, so no
convergence of curvature derivatives is needed.

Write $\rho_j\,dx=\iota_j^*d\sigma_{H,j}$ and let
$\widehat Z_j=(d\iota_j)^{-1}(Z_j\circ\iota_j)$ be the coordinate
vector field on $U$. In the following integrals, scalar coefficients
are also pulled back by $\iota_j$. The coordinate formula for
weighted divergence and integration by parts give
\[
 \begin{aligned}
 \operatorname{div}_x(\rho_j\widehat Z_j)
 &=\rho_j(\operatorname{div}_{H,j}Z_j)\circ\iota_j
   =-(2n-2)\rho_j\alpha_j\tau_j,\\
 \int_U\rho_j\widehat Z_j\cdot D\psi\,dx
 &=-\int_U\psi\operatorname{div}_x(\rho_j\widehat Z_j)\,dx\\
 &=(2n-2)\int_U\rho_j\alpha_j\tau_j\psi\,dx.
 \end{aligned}
\]
There is no boundary term because $\psi$ is compactly supported.
On $K$, all these coefficients depend continuously on the first
and second derivatives of the parametrization; the horizontal normal
length stays bounded away from zero. Hence $C^2$ convergence gives,
uniformly on $K$,
\[
 \rho_j\to\rho>0,\qquad \alpha_j\to\alpha,\qquad
 \tau_j=H_j-2n\to0,\qquad
 \widehat Z_j\to\widehat Z
 :=(d\iota)^{-1}\bigl((\ShapeError e_n)\circ\iota\bigr).
\]
In particular, both limiting steps are controlled by
\[
 \begin{aligned}
 \left|(2n-2)\int_U\rho_j\alpha_j\tau_j\psi\,dx\right|
 &\le(2n-2)\|\rho_j\alpha_j\|_{L^\infty(K)}
   \|\tau_j\|_{L^\infty(K)}\|\psi\|_{L^1(K)}\longrightarrow0,\\
 \left|\int_U(\rho_j\widehat Z_j-\rho\widehat Z)
   \cdot D\psi\,dx\right|
 &\le\|\rho_j\widehat Z_j-\rho\widehat Z\|_{L^\infty(K)}
   \|D\psi\|_{L^1(K)}\longrightarrow0.
 \end{aligned}
\]
For $\varphi=\psi\circ\iota^{-1}$, we obtain
\eqref{rig:eq:codazzi-conservation} weakly, also across $\alpha=0$
since no step divides by $\alpha$:
\[
 \int_{\iota(U)}\langle\ShapeError e_n,\nabla_b\varphi\rangle
   \,d\sigma_H
 =\int_U\rho\widehat Z\cdot D\psi\,dx=0.
\]

\section{Divergence extension across the characteristic set}
\label{app:divergence-extension}
In a characteristic graph chart $t=f(z)$, put $G=\nabla f+Jz$ and
$g=|G|$, so that $d\sigma_H=g\,dz$. The cofactor rank and sublevel
estimates are proved in \cite{JacobiSpectrum}*{Corollary~2.2}. They make
the cutoff flux vanish for horizontal fields with $O(g^{-1})$ growth,
even when the characteristic set has positive dimension.

\begin{samepage}
  \begin{lemma}
    \label{app:lem:no-divergence-charge}
    Let $n\ge2$, and let $\Sigma$ be compact, oriented and $C^2$.
    In characteristic graph charts put $g=|G|$ as above. Suppose a locally
    integrable horizontal tangent field $Z$ on $\Sigma^*$ satisfies
    $|Z|\le Cg^{-1}$ and $\operatorname{div}_HZ=f$ weakly there, with
    $f\in L^1(\Sigma,d\sigma_H)$.
    Then the identity extends distributionally across $\Sigma\setminus\Sigma^*$.
    In particular, if $\partial\Sigma=\varnothing$, then
    \[
      \int_\Sigma f\,d\sigma_H=0.
    \]
  \end{lemma}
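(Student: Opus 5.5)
We will prove the lemma by a cutoff argument near the characteristic set $\Sigma\setminus\Sigma^*$. The cutoff is built from the function $g$ in each characteristic graph chart. The flux error it produces is then controlled by the sublevel estimates of \cite{JacobiSpectrum}*{Corollary~2.2}.

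First we localize. Take a test function $\phi\in C^1(\Sigma)$; the goal is
\[
 \int_\Sigma\langle Z,\nabla_b\phi\rangle\,d\sigma_H=-\int_\Sigma f\phi\,d\sigma_H .
\]
Both integrals are finite: $|Z|\,d\sigma_H\le C\,dz$ in graph charts, and $f\in L^1$. A partition of unity reduces the claim to a single compact characteristic graph chart $t=f(z)$ over $U\subset\RR^{2n}$, since on regular charts the identity is the hypothesis.

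Next we cut off. Fix a smooth $\eta:[0,\infty)\to[0,1]$ with $\eta=0$ on $[0,1]$ and $\eta=1$ on $[2,\infty)$, and set $\eta_\rho=\eta(g/\rho)$. This function is only Lipschitz, because $g=|G|$ and $G$ is $C^1$. Either we mollify $g$ at scale $\rho$, or we note that the weak identity on $\Sigma^*$ extends to compactly supported Lipschitz tests by density. Testing with $\eta_\rho\phi$ gives
\[
 \int\eta_\rho\langle Z,\nabla_b\phi\rangle\,d\sigma_H+\int\phi\langle Z,\nabla_b\eta_\rho\rangle\,d\sigma_H=-\int\eta_\rho f\phi\,d\sigma_H .
\]
As $\rho\to0$, dominated convergence handles the first and last terms. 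Here we use that the characteristic set $\{g=0\}$ is $d\sigma_H$-null, indeed $dz$-null by the rank bound.

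The remaining step, which is the main obstacle, is to show the flux term vanishes. We have $|\nabla_b\eta_\rho|\le C\rho^{-1}|D_zG|\mathbf 1_{\{\rho<g<2\rho\}}\le C\rho^{-1}\mathbf 1_{\{g<2\rho\}}$. Combined with $|Z|\le Cg^{-1}\le C\rho^{-1}$ on that shell and $d\sigma_H=g\,dz\le2\rho\,dz$, this bounds the flux by
\[
 C\rho^{-1}\,\bigl|\{z\in K:g(z)<2\rho\}\bigr| .
\]
This is where the rank bound enters. At each characteristic point $\operatorname{rank}D_zG\ge n$, so locally $G$ behaves like a submersion onto $n$ coordinates. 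The sublevel set $\{|G|<2\rho\}$ therefore has Lebesgue measure $O(\rho^n)$ on compact subcharts, which is the sublevel estimate of \cite{JacobiSpectrum}*{Corollary~2.2}; compactness and a finite cover make it uniform. The flux is then $O(\rho^{n-1})$, which tends to $0$ precisely because $n\ge2$. If the cited corollary is only stated as integrability of $g^{-1}$, we would rederive the $O(\rho^n)$ volume bound from the implicit function theorem applied to $n$ components of $G$ with independent differentials.

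This proves the distributional identity across $\Sigma\setminus\Sigma^*$. When $\partial\Sigma=\varnothing$, the choice $\phi\equiv1$ makes the left side vanish and gives $\int_\Sigma f\,d\sigma_H=0$.
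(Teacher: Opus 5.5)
Your proposal is correct and follows the paper's proof essentially step for step: a cutoff in $g/\rho$, the bound $|Z|\,g\le C$ that reduces the flux term to $C\rho^{-1}|\{g<2\rho\}|=O(\rho^{n-1})$ via the sublevel estimate of \cite{JacobiSpectrum}*{Corollary~2.2}, a partition of unity, and finally the test $\phi\equiv1$. The regularization you propose for $\eta_\rho$ is unnecessary: $\eta_\rho$ vanishes on $\{g<\rho\}$, and $g=|G|$ is $C^1$ wherever $G\ne0$, so $\eta_\rho\phi$ is already a $C^1$ test with compact support in $\Sigma^*$.
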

\end{samepage}

\begin{proof}
  Fix a compactly supported $C^1$ test $\psi$ in a characteristic
  graph chart and put $K=\operatorname{supp}\psi$.
  In these coordinates $d\sigma_H=g\,dz$. Set $Z=0$ on $\{g=0\}$;
  this does not affect any integral with respect to $d\sigma_H$.
  The growth assumption gives
  \[
    \int_K|Z|\,d\sigma_H
    =\int_K|Z|g\,dz\le C|K|<\infty.
  \]
  For $E_\rho=K\cap\{g<2\rho\}$,
  the sublevel estimate in \cite{JacobiSpectrum}*{Corollary~2.2}
  with $\gamma=0$ gives
  \[
    |E_\rho|
    =\int_{K\cap\{g<2\rho\}}1\,dz
    \le C(2\rho)^n\le C\rho^n.
  \]

  Choose $\theta\in C^\infty(\RR)$ with $0\le\theta\le1$,
  $\theta=0$ on $(-\infty,1]$ and $\theta=1$ on $[2,\infty)$,
  and set $\chi_\rho(z)=\theta(g(z)/\rho)$.
  On $\{g>0\}$ the chain rule gives
  \[
    \partial_{z_a}g
    =\frac{\langle G,\partial_{z_a}G\rangle}{|G|},
    \qquad
    D_z\chi_\rho
    =\rho^{-1}\theta'(g/\rho)D_zg,
    \qquad
    |D_z\chi_\rho|
    \le C\rho^{-1}\mathbf1_{\{\rho<g<2\rho\}}.
  \]
  Here $D_zG$ is bounded on the fixed compact chart. The function
  $\chi_\rho$ is identically zero near $\{g=0\}$, so
  $\psi\chi_\rho$ is a $C^1$ test with compact support in $\Sigma^*$.
  The weak divergence identity applies to this test, by $C^1$
  approximation on its compact regular support if needed. Thus
  \[
    \begin{aligned}
      \int\psi\chi_\rho f\,d\sigma_H
      &=-\int\langle Z,\nabla_b(\psi\chi_\rho)\rangle\,d\sigma_H\\
      &=-\int\chi_\rho\langle Z,\nabla_b\psi\rangle\,d\sigma_H
        -\int\psi\langle Z,\nabla_b\chi_\rho\rangle\,d\sigma_H.
    \end{aligned}
  \]
  Using $1=\chi_\rho+(1-\chi_\rho)$ gives
  \[
    \begin{aligned}
      \int\psi f\,d\sigma_H
        +\int\langle Z,\nabla_b\psi\rangle\,d\sigma_H
      &=\int\psi(1-\chi_\rho)f\,d\sigma_H\\
      &\quad+\int(1-\chi_\rho)\langle Z,\nabla_b\psi\rangle\,d\sigma_H\\
      &\quad-\int\psi\langle Z,\nabla_b\chi_\rho\rangle\,d\sigma_H.
    \end{aligned}
  \]
  Since $Z$ is horizontal and tangent to the graph, its $z$-coordinate
  vector has Euclidean length $|Z|$. Therefore, for any $C^1$ function $u$,
  \[
    |\langle Z,\nabla_bu\rangle|=|Zu|\le|Z|\,|D_zu|.
  \]
  Using this inequality, $d\sigma_H=g\,dz$ and $|Z|g\le C$, the
  cutoff derivative term satisfies
  \[
    \begin{aligned}
      \left|\int\psi\langle Z,\nabla_b\chi_\rho\rangle\,d\sigma_H\right|
      &\le\|\psi\|_\infty
          \int_{K\cap\{\rho<g<2\rho\}}|Z|\,|D_z\chi_\rho|\,g\,dz\\
      &\le C\|\psi\|_\infty\rho^{-1}
          \int_{K\cap\{\rho<g<2\rho\}}|Z|g\,dz\\
      &\le C\|\psi\|_\infty\rho^{-1}|E_\rho|
       \le C\|\psi\|_\infty\rho^{n-1}\longrightarrow0.
    \end{aligned}
  \]
  The last limit uses $n\ge2$. The omitted field term satisfies
  \[
    \begin{aligned}
      \left|\int(1-\chi_\rho)\langle Z,\nabla_b\psi\rangle\,d\sigma_H\right|
      &\le\|D_z\psi\|_\infty\int_{E_\rho}|Z|g\,dz\\
      &\le C\|D_z\psi\|_\infty|E_\rho|
       \le C\|D_z\psi\|_\infty\rho^n\longrightarrow0.
    \end{aligned}
  \]
  Finally,
  \[
    \left|\int\psi(1-\chi_\rho)f\,d\sigma_H\right|
    \le\|\psi\|_\infty\int_{E_\rho}|f|\,d\sigma_H
    \longrightarrow0.
  \]
  Indeed, $\mathbf1_{E_\rho}\to0$ almost everywhere with respect to
  $d\sigma_H=g\,dz$, and $|f|\in L^1(d\sigma_H)$ supplies an integrable
  dominating function. Letting $\rho\downarrow0$ in the three-term
  identity proves
  \[
    \int\psi f\,d\sigma_H
    =-\int\langle Z,\nabla_b\psi\rangle\,d\sigma_H
  \]
  across the characteristic set.

  For a general test $\psi$ supported in the interior of $\Sigma$,
  choose a finite partition $\sum_\ell\eta_\ell=1$ near its support,
  subordinate to regular and characteristic charts.
  Apply the local identity to $\eta_\ell\psi$ and sum:
  \[
    \begin{aligned}
      \int_\Sigma\psi f\,d\sigma_H
      &=\sum_\ell\int_\Sigma\eta_\ell\psi f\,d\sigma_H\\
      &=-\sum_\ell\int_\Sigma
          \langle Z,\nabla_b(\eta_\ell\psi)\rangle\,d\sigma_H\\
      &=-\int_\Sigma\left(\sum_\ell\eta_\ell\right)
          \langle Z,\nabla_b\psi\rangle\,d\sigma_H
        -\int_\Sigma\psi\left\langle Z,
          \nabla_b\sum_\ell\eta_\ell\right\rangle\,d\sigma_H\\
      &=-\int_\Sigma\langle Z,\nabla_b\psi\rangle\,d\sigma_H.
    \end{aligned}
  \]
  If $\partial\Sigma=\varnothing$, compactness permits $\psi\equiv1$,
  giving
  \[
    \int_\Sigma f\,d\sigma_H
    =-\int_\Sigma\langle Z,\nabla_b1\rangle\,d\sigma_H=0.
  \]
\end{proof}

\begin{applicationremark}[Growth threshold]
  More generally, suppose the cofactor rank is at least $r$ at every
  characteristic point of a compact graph patch. The proof of
  \cite{JacobiSpectrum}*{Lemma~2.1}, applied with rank $r$, gives
  $|\{g<\rho\}|\le C\rho^r$. If $|Z|\le Cg^{-s}$ with
  $0\le s<r$, the cutoff error is bounded by
  \[
    C\rho^{-1}\int_{\{\rho<g<2\rho\}}g^{1-s}\,dz
    \le C\rho^{r-s}\longrightarrow0.
  \]
  The weighted sublevel estimate from the same proof makes the omitted
  field integral $O(\rho^{r+1-s})$.
  Thus the same extension holds,
  provided the weak divergence still belongs to $L^1(d\sigma_H)$.
  At the endpoint $s=r$, suppose
  $\sup_{\{0<g<\rho\}}g^r|Z|\to0$ as $\rho\downarrow0$ on the fixed
  compact graph patch. Then the cutoff flux also vanishes; an
  $O(g^{-r})$ bound alone does not force this limit. The lemma uses
  $r\ge n$ and $s=1$, so $n\ge2$ places its fields strictly below
  the growth threshold.
\end{applicationremark}


\begin{thebibliography}{99}
\linespread{1.0}\selectfont
\setlength{\itemsep}{3pt}
\setlength{\parsep}{0pt}
\bibitem{BarbosaDoCarmo}
J.~L. Barbosa and M.~do Carmo,
\href{https://doi.org/10.1007/BF01215045}{\emph{Stability of hypersurfaces with constant mean curvature}},
Math. Z. \textbf{185} (1984), 339--353.

\bibitem{BFM}
T.~P. Branson, L.~Fontana, and C.~Morpurgo,
\href{https://doi.org/10.4007/annals.2013.177.1.1}{\emph{Moser--Trudinger and Beckner--Onofri's inequalities on the CR sphere}},
Ann. of Math. \textbf{177} (2013), no.~1, 1--52.

\bibitem{CDPT}
L. Capogna, D. Danielli, S. D. Pauls, and J. T. Tyson,
\href{https://doi.org/10.1007/978-3-7643-8133-2}{\emph{An Introduction to the Heisenberg Group and the Sub-Riemannian
Isoperimetric Problem}},
Progr. Math., vol.~259, Birkh\"auser, Basel, 2007.

\bibitem{JacobiSpectrum}
W.-C. Chen, W.-T. Kao and C.-K. Lien,
\emph{The Jacobi Spectrum and Global Second Variation of Pansu Spheres},
preprint, 2026, submitted concurrently to arXiv.

\bibitem{CCHYSigma}
J.-H. Cheng, H.-L. Chiu, J.-F. Hwang, and P. Yang,
\href{https://doi.org/10.1007/s00526-016-1006-7}{\emph{Umbilic hypersurfaces of constant sigma-$k$ curvature in the Heisenberg group}},
Calc. Var. Partial Differential Equations \textbf{55} (2016), no.~3, article~66.

\bibitem{CCHY}
J.-H. Cheng, H.-L. Chiu, J.-F. Hwang, and P. Yang,
\href{https://doi.org/10.1515/crelle-2015-0044}{\emph{Umbilicity and characterization of Pansu spheres in the Heisenberg group}},
J. Reine Angew. Math. \textbf{738} (2018), 203--235.

\bibitem{CHY}
J.-H.~Cheng, J.-F.~Hwang, and P.~Yang,
\href{https://doi.org/10.1007/s00208-006-0033-7}{\emph{Existence and uniqueness for $p$-area minimizers in the
Heisenberg group}}, Math. Ann. \textbf{337} (2007), 253--293.

\bibitem{CLL}
H.-L. Chiu, S.-H. Lai, and H.-F. Liu,
\href{https://doi.org/10.1007/s12220-026-02594-8}{\emph{A complete classification of rotationally symmetric hypersurfaces
in the Heisenberg groups $H_n$}},
J. Geom. Anal. \textbf{36} (2026), article~348.

\bibitem{DGN}
D.~Danielli, N.~Garofalo, and D.-M.~Nhieu,
\href{https://doi.org/10.1016/j.aim.2007.04.004}{\emph{Sub-Riemannian calculus on hypersurfaces in Carnot groups}},
Adv. Math. \textbf{215} (2007), 292--378.

\bibitem{GPV}
G.~Giovannardi, A.~Pinamonti, and S.~Verzellesi,
\href{https://arxiv.org/abs/2409.20359v2}{\emph{Curvature estimates for minimal hypersurfaces in the Heisenberg group}},
\href{https://arxiv.org/abs/2409.20359v2}{arXiv:2409.20359v2 [math.DG]}, 28 May 2025.

\bibitem{HPPreprint}
R.~K. Hladky and S.~D. Pauls,
\href{https://arxiv.org/abs/math/0702237v1}{\emph{Variation of perimeter measure in sub-Riemannian geometry}},
\href{https://arxiv.org/abs/math/0702237v1}{arXiv:\allowbreak math/0702237v1 [math.DG]}, 8 February 2007.

\bibitem{HP}
R.~K. Hladky and S.~D. Pauls,
\href{https://dergipark.org.tr/en/pub/iejg/article/597626}{\emph{Variation of perimeter measure in sub-Riemannian geometry}},
Int. Electron. J. Geom. \textbf{6} (2013), no.~1, 8--40.

\bibitem{HurtadoRosalesSpheres}
A.~Hurtado and C.~Rosales,
\href{https://doi.org/10.1007/s00526-015-0898-y}{\emph{Existence, characterization and stability of Pansu spheres in
sub-Riemannian 3-space forms}},
Calc. Var. Partial Differential Equations \textbf{54} (2015), no.~3,
3183--3227.

\bibitem{HurtadoRosalesStable}
A.~Hurtado and C.~Rosales,
\href{https://doi.org/10.1007/s00526-020-01834-1}{\emph{An instability criterion for volume-preserving area-stationary
surfaces with singular curves in sub-Riemannian 3-space forms}},
Calc. Var. Partial Differential Equations \textbf{59} (2020), article~165.

\bibitem{LeonardiRigot}
G.~P. Leonardi and S.~Rigot,
\href{https://iris.unitn.it/handle/11572/229177}{\emph{Isoperimetric sets on Carnot groups}},
Houston J. Math. \textbf{29} (2003), no.~3, 609--637.

\bibitem{MontefalconeVar}
F.~Montefalcone,
\href{https://doi.org/10.1016/j.matpur.2007.01.009}{\emph{Hypersurfaces and variational formulas in sub-Riemannian Carnot
groups}}, J. Math. Pures Appl. \textbf{87} (2007), 453--494.

\bibitem{MontefalconeStability}
F. Montefalcone,
\href{https://arxiv.org/abs/1110.0707v2}{\emph{Stability of Heisenberg isoperimetric profiles}},
\href{https://arxiv.org/abs/1110.0707v2}{arXiv:1110.0707v2 [math.MG]}, 17 November 2011.

\bibitem{Pansu}
P. Pansu,
\href{https://www.imo.universite-paris-saclay.fr/~pierre.pansu/CRAS_1982.pdf}{\emph{Une in\'egalit\'e isop\'erim\'etrique sur le groupe de Heisenberg}},
C. R. Acad. Sci. Paris S\'er. I Math. \textbf{295} (1982), 127--130.

\bibitem{RitoreCalibration}
M. Ritor\'e,
\href{https://doi.org/10.1007/s00526-011-0425-8}{\emph{A proof by calibration of an isoperimetric inequality in the
Heisenberg group $\mathbb H^n$}},
Calc. Var. Partial Differential Equations \textbf{44} (2012), 47--60.

\bibitem{RitoreRosales}
M. Ritor\'e and C. Rosales,
\href{https://doi.org/10.1007/bf02922137}{\emph{Rotationally invariant hypersurfaces with constant mean curvature in
the Heisenberg group $\mathbb H^n$}},
J. Geom. Anal. \textbf{16} (2006), no.~4, 703--720.

\bibitem{RitoreRosalesStationary}
M.~Ritor\'e and C.~Rosales,
\href{https://doi.org/10.1016/j.aim.2008.05.011}{\emph{Area-stationary surfaces in the Heisenberg group $\mathbb H^1$}},
Adv. Math. \textbf{219} (2008), no.~2, 633--671.

\bibitem{Wente}
H.~C. Wente,
\href{https://msp.org/pjm/1991/147-2/pjm-v147-n2-p13-p.pdf}{\emph{A note on the stability theorem of J.~L. Barbosa and M.~do Carmo for closed surfaces of constant mean curvature}},
Pacific J. Math. \textbf{147} (1991), no.~2, 375--379.

\end{thebibliography}
\end{document}